\documentclass[]{article}

\setlength{\textheight}{8.85in}
\setlength{\textwidth}{6.75in}
\setlength{\topmargin}{0.0in}
\setlength{\headheight}{0.0in}
\setlength{\headsep}{0.0in}
\setlength{\oddsidemargin}{-.125in}
\setlength{\parskip}{2mm}
\setlength{\parindent}{0mm}

\usepackage{amsmath,amssymb,amsfonts,amsthm,mathrsfs,paralist,amsrefs}
\usepackage{mathtools,enumitem,color}
\usepackage{setspace}
\usepackage{authblk}
\usepackage[dvipsnames]{xcolor}
\theoremstyle{plain}
\newtheorem{thm}{Theorem}[section]
\newtheorem{prop}[thm]{Proposition}
\newtheorem{lemma}[thm]{Lemma}
\newtheorem{cor}[thm]{Corollary}

\theoremstyle{definition}
\newtheorem{defi}[thm]{Definition}

\theoremstyle{remark}
\newtheorem{remark}[thm]{Remark}


\newcommand{\C}{\mathbb {C}}
\newcommand{\R}{\mathbb {R}}

\newcommand{\N}{\mathbb {N}}

\def\BFa {\mathbf{a}}
\def\BFe {\mathbf{e}}
\def\BFb {\mathbf{b}}

\def\BFu {\mathbf{u}}
\def\BFU {\mathbf{U}}
\def\BFV {\mathbf{V}}
\def\BFv {\mathbf{v}}
\def\BFx {\mathbf{x}}
\def\BFX {\mathbf{X}}
\def\BFy {\mathbf{y}}
\def\BFY {\mathbf{Y}}
\def\BFi {\textit{\bfseries i}}
\def\BFk {\textit{\bfseries k}}

\def\BFz {\mathbf{z}}

\DeclareMathOperator{\vol}{Vol}

\def\hjaspar#1{}
\def\hchristiane#1 {}

\begin{document}

  \title{On the dependence structure and quality of scrambled $(t,m,s)$-nets}
\author[1]{Jaspar Wiart}
\author[2]{Christiane Lemieux\thanks{Corresponding author}}
\author[2]{Gracia Y.\ Dong}

 \affil[1]{Johannes Kepler University \authorcr Altenbergerstr.\ 69 \authorcr
4040 Linz, Austria \authorcr  jaspar.wiart@jku.at}

\affil[2]{Department of Statistics and Actuarial Science\authorcr
University of Waterloo, \authorcr 200 University Avenue West \authorcr Ontario, Canada, N2L 3G1 \authorcr email: clemieux@uwaterloo.ca, gracia.dong@uwaterloo.ca }

 \date{}
\maketitle

\begin{abstract}
 In this paper we develop a framework 
to study the dependence structure of scrambled $(t,m,s)$-nets. It relies on values denoted by $C_b(\BFk;P_n)$, which are related to how many distinct pairs of points from  $P_n$ lie in the same elementary $\BFk-$interval in base $b$. These values  quantify the equidistribution properties of $P_n$ in a more informative way than the parameter $t$. They also play a key role in determining if a scrambled set $\tilde{P}_n$ is    negative lower orthant dependent (NLOD). Indeed  this property holds if and only if $C_b(\BFk;P_n) \le 1$ for all $\BFk \in \N^s$, which in turn implies that a scrambled digital $(t,m,s)-$net in base $b$ is NLOD 
 if and only if $t=0$. Through numerical examples we demonstrate that these  $C_b(\BFk;P_n)$ values are a powerful tool to compare the quality of different $(t,m,s)$-nets, and to enhance our understanding of how scrambling can improve the quality of deterministic point sets. 
 \end{abstract}
 
\noindent{\bf Keywords:}  Negative dependence; scrambled nets; variance; quasi-Monte Carlo.
   
\section{Introduction}

Quasi-Monte Carlo methods rely on low-discrepancy point sets and sequences to construct estimates for multidimensional integrals over the unit hypercube. In this context, the notion of discrepancy refers to the distance between the uniform distribution and the empirical distribution induced by a point set. This measure of non-uniformity is particularly suitable for deterministic point sets, for which a number of results exist that provide asymptotic results on the discrepancy of various constructions, including digital $(t,m,s)$-nets \cite{DiPi10,rNIE92b}.

In recent years, the use of randomized quasi-Monte Carlo methods has gained in popularity. By introducing randomness in a low-discrepancy point set, one gains not only access to probabilistic error estimates, but also in some cases to an improvement in the uniformity of the point set. In particular, the scrambled digital nets introduced by Owen in 1995 \cite{vOWE95a} have been used in different applications in practice. A number of results studying the variance of the corresponding estimators have been proved: see, for example, \cite{vOWE03a,Ger15a}.  For smooth enough functions, results in  \cite{vOWE97b} show a much better convergence rate for the variance of these scrambled net estimators than the Monte Carlo equivalent. Other results give  bounds holding for all square-integrable functions, where the scrambled net variance is shown to be no larger than a constant (larger than one, and possibly quite large depending on the net) times the Monte Carlo variance \cite{vOWE97a,vOWE97c}.

In \cite{Lem17}, a new approach to study scrambled $(0,m,s)$-nets was  introduced. It is based on the concept of negative lower/upper orthant dependence, and how it can be used to study the covariance term that differentiates the variance of Monte Carlo sampling-based estimators from that of scrambled $(0,m,s)$-nets. To study this covariance term, a new representation result was used. It is based on multivariate integration by parts, which allows to decompose the covariance term in a part that assesses the underlying point set---via its dependence structure---and a part that depends on the function. It is worth noting that a potentially larger class of functions than those of bounded variation in the sense of Hardy and Krause \cite{rNIE92b} can be studied via this decomposition. In the same paper, it was proved that two-dimensional scrambled $(0,m,2)$-nets have a variance no larger than a Monte Carlo estimator for functions that are monotone in each variable. This result was obtained by first establishing that scrambled $(0,m,2)$-nets are  negatively lower orthant dependent.

Following \cite{Lem17}, a number of other authors have pursued the idea of using negative dependence to study randomized quasi-Monte Carlo point sets. For example, randomizations that induce negative dependence for lattice rules are presented in \cite{Wnuk19lat}. In \cite{Wnuk19Prob}, the authors study different concepts of negative dependence and, among other things, use them to derive probabilistic upper bounds on the discrepancy of the corresponding point sets.

In the present paper, we 
examine the randomized quasi-Monte Carlo sampling scheme $\tilde{P}_n$ obtained by scrambling a deterministic point set $P_n \subseteq [0,1)^s$. This class of sampling schemes includes scrambled digital $(t,m,s)-$nets in base $b$. We propose to measure the quality of these sampling schemes using values that we denote by $C_b(\BFk;P_n)$, $\BFk \in \N^s$, which arise in our study of the dependence structure of $\tilde{P}_n$. These quantities are related to how many distinct pairs of points lie in the same elementary $\BFk-$interval. They also play a key role in analyzing whether or not $\tilde{P}_n$ is    
negative lower orthant dependent (NLOD) and negative upper orthant dependent (NUOD). 
Indeed, we show these properties hold if and only if $C_b(\BFk;P_n) \le 1$ for all $\BFk \in \N^s$, a condition we refer to as being {\em completely quasi-equidistributed in base $b$}. In turn, this framework allows us to show that a scrambled digital $(t,m,s)-$net in base $b$ is NLOD/NUOD  
 if and only if $t=0$, for any dimension $s \ge 1$, thus generalizing the result from \cite{Lem17}. We also show that the first $n$ points of a $(0,s)-$sequence in base $b$ form a completely quasi-equidistributed point set (in base $b$).
 Through numerical examples we demonstrate that these  $C_b(\BFk;P_n)$ values are a powerful tool to compare the quality of different $(t,m,s)$-nets (in possibly different bases), and to enhance our understanding of how scrambling can improve the quality of deterministic point sets. 

This paper is organized as follows. In Section \ref{sec:prel} we review some background information on scrambled nets and dependence concepts, and prove a few key properties of scrambled nets that are relevant when studying their dependence structure. In Section \ref{sec:pdf} we obtain formulas for the joint probability density function (pdf) of pairs of distinct points in a scrambled point set. 
In Section \ref{sec:nlod} we show that a scrambled point set $\tilde{P}_n$ is NLOD/NUOD if and only if the underlying point set $P_n$ is completely quasi-equidistributed, a concept also defined in that section. In Section \ref{sec:qual} we discuss three possible avenues of exploration to leverage the insight provided by the $C_b(\BFk;P_n)$ values, and include numerical examples to illustrate these different ideas. 
Concluding comments and ideas for future work are presented in Section \ref{sec:Conc}, and technical proofs are included in the appendix.

\section{Preliminaries}
\label{sec:prel}

For ease of presentation, this section is divided into three subsections. The first one provides background on scrambled nets, the second one reviews dependence concepts, and the third one goes over tools that will be useful to analyze the joint pdf $\psi(\BFx,\BFy)$.

\subsection{Background on scrambled nets}
\label{sec:backNets}

We start by recalling key properties of scrambled nets.

A {\em digital net in base $b$} (for $b$ prime) \cite{DiPi10,rNIE92b} is a point set $P_n = \{\BFV_1,\ldots,\BFV_n\} \subseteq [0,1)^s$ with $n=b^m$ that is constructed via $s$  generating matrices $C_1,\ldots,C_s$ of size $m \times m$ with entries in $\mathbb{F}_b$, in the following way: for $0 \le i < b^m$ we write
$i = \sum_{r=0}^{m-1} i_r b^r$,
then the point $\BFV_i = (V_{i,1},\ldots,V_{i,s})$ is obtained as
$V_{i,\ell} = \sum_{r=1}^{m} V_{i,\ell,r}b^{-r}$, 
and
$V_{i,\ell,r} = \sum_{p=1}^{m} C_{\ell,r,p}i_{p-1}$,
where $C_{\ell,r,p}$ is the element on the $r$th row and $p$th column of $C_{\ell}$.

To assess the uniformity of the net,  the concept of $(k_1,\ldots,k_s)$-equidistribution is used. More precisely, we say that $P_n$ with $n=b^m$ is  {\em $(k_1,\ldots,k_s)$-equidistributed in base $b$} if every {\em elementary $(k_1,\ldots,k_s)-$interval} of the form
\begin{equation}
\label{eq:elemInt}
I_{\BFk}(\BFa) = \prod_{\ell=1}^s \left[\left. \frac{a_{\ell}}{b^{k_{\ell}}},\frac{a_{\ell}+1}{b^{k_{\ell}}} \right.\right)
\end{equation}
for $0 \le a_{\ell} < b^{k_{\ell}}$ contains exactly $b^{m-k_1-\ldots - k_s}$ points from $P_n$, assuming $m \ge k_1 + \ldots + k_s$. We say that a digital net in base $b$ has a {\em quality parameter $t$} if $P_n$ is $(k_1,\ldots,k_s)$-equidistributed for all $s$-dimensional vectors of non-negative integers $\BFk=(k_1,\ldots,k_s)$ such that $k_1 + \ldots + k_s \le m-t$. We then refer to $P_n$ as a digital $(t,m,s)$-net in base $b$. So the lower is $t$, the more uniform $P_n$ is \cite{rNIE92b}. 
For the remainder of this paper, when referring to $P_n$ as a $(t,m,s)$-net in base $b$, we assume $t$ is the smallest value for which this is true, i.e., we assume $P_n$ is not a $(t-1,m,s)$-net in base $b$. 

The construction proposed by Faure in \cite{rFAU82a} provides digital $(0,m,s)$-nets in prime bases $b \ge s$. The widely used Sobol' sequences \cite{rSOB67a} provide digital $(t,m,s)$-nets in base 2 with $t=0$ when $s=2$ and $t>0$ otherwise. Information on newer constructions can be found in \cite{qDIC13a,DiPi10}. Note that a {\em $(t,m,s)$-net in base $b$} is a point set $P_n$ with $n=b^m$ points such that the above equidistribution properties holds, but the point set may not necessarily have been constructed using generating matrices, i.e., using the digital method.

In this paper we are interested in randomized point sets $\tilde{P}_n = \{\BFU_1,\ldots,\BFU_n\}$ that are obtained by applying a {\em scrambling transformation}  in base $b$ to a deterministic point set $P_n = \{\BFV_1,\ldots,\BFV_n\}$. We can think of a scrambling transformation as a function ${\cal S}: [0,1]^s \times \Omega \rightarrow  [0,1]^s$ which applies a given random vector $\omega$ from a probability space $(\Omega,{\cal F}, {\cal P})$ to the base $b$ digits $V_{i,\ell,r}$ of each point $\BFV_i$ to get $\BFU_i = {\cal S}(\BFV_i,\omega)$.  

Generally speaking, the goal of a scrambling transformation is to create a randomized version of a point set $P_n$  that preserves the desirable properties of $P_n$ but allows for error estimation. It also usually refers to a process that either randomizes the generating matrices of the digital net, or applies random permutations to the base $b$ digits $V_{i,l,r}$ forming the points $\BFV_i$.
For instance, one way to scramble a digital net $P_n$ in base $b$ 
is to multiply from the left each generating matrix $C_{\ell}$ by a randomly chosen non-singular lower triangular (NLT) matrix $S_{\ell}$ (i.e., with entries on the diagonal uniformly chosen in $\{1,\ldots,b-1\}$, and entries below the diagonal uniformly chosen in $\{0,\ldots,b-1\}$, with the other entries set to 0), and then add a digital shift in base $b$ \cite{rMAT98c}. In this case, the random vector $\omega$ would correspond to the entries in $S_{\ell}, \ell=1,\ldots,s$ and the digital shift. This scrambling method is referred to as ``random linear scrambling'' in \cite{rMAT98c}, ``Owen's scrambling'' in \cite{vHON01a}, and as ``affine matrix scrambling'' in \cite{vOWE03a}, which is the term we adopt in this paper.

In this paper, we assume the scrambling transformation ${\cal S}$ is such that the  following two properties hold \cite{Hic96a,vHON01a,vOWE03a} and refer to such ${\cal S}$ as a {\em base $b-$digital scramble}. We also denote the obtained point set by ${}_b\tilde{P}_n$. 

Let 
$U_{i,\ell} = \sum_{r=1}^{\infty} U_{i,\ell,r} b^{-r}$, 
that is, $U_{i,\ell,r}$ represents the $r$th digit in the base $b$ expansion of the $\ell$th coordinate of the $i$th point $\BFU_i$. Then we must have:

\begin{enumerate}
\item Each $\BFU_i \sim U([0,1)^s)$; 
\hchristiane{In Hickernell's papers, this property is instead stated as saying that each digit is uniformly distributed over ${0,\ldots,b-1}$ and that the digits are independent across dimensions, i.e., we could replace by ``Each $U_{i,\ell,r}$ is uniformly distributed over $\{0,1,\ldots,b-1\}$, and for any two points $U_{i}$ and $U_{j}$, the pairs $(U_{i,1},U_{j,1}),\ldots,(U_{i,s},U_{j,s})$ are mutually independent''. Hickernell 1996 says that this implies each $\BFU_i \sim U([0,1]^s)$, which we could then state as a remark in case we used that assumption directly elsewhere.}
\item For two distinct points $\BFU_i={\cal S}(\BFV_i,\omega),\BFU_j={\cal S}(\BFV_j,\omega)$ and for each coordinate $\ell=1,\ldots,s$, if the two deterministic points $V_{i,\ell},V_{j,\ell}$ 
have the same first $r$ digits in base $b$ and differ on the $(r+1)$th digit, then (i) the scrambled points $(U_{i,\ell},U_{j,\ell})$ also have the same first $r$ digits in base $b$, and the pair $(U_{i,\ell,r+1},U_{j,\ell,r+1})$ is uniformly distributed over $\{(k_1,k_2), 0 \le k_1 \neq k_2 < b\}$; (ii)
the pairs $(U_{i,\ell,v},U_{j,\ell,v})$ for $v > r+1$ are mutually independent and uniformly distributed over 
$\{(k_1,k_2), 0 \le k_1, k_2 < b\}$.
\end{enumerate}

Note that in the description of the above two properties, a base $b-$digital scramble does not require $n=b^m$, and the base $b$ of the scrambling does not need to match the base in which a net has been constructed, hence the notation ${}_b\tilde{P}_n$. 

The affine matrix scrambling method described above---using NLT matrices $S_{\ell}$---can be shown to satisfy these two properties \cite{vHON01a,vOWE03a} (see also \cite{rMAT98c}, where a slightly weaker condition is used for 2.(ii)), as well as the nested uniform scrambling method proposed by Owen in \cite{vOWE95a}. We refer the reader to \cite{vOWE03a} for further information on scrambling methods for digital nets. For the remainder of this paper, whenever we refer to a {\em scrambled $(t,m,s)-$net in base $b$}  we are assuming it has been scrambled using a base $b-$digital scramble.  

\subsection{Dependence concepts}

Next, we introduce dependence concepts from \cite{Lem17} that will be used throughout this paper. 

\hchristiane{I reordered this so that it is easier to then motivate the def;n of NLOD and NUOD}

Consider a sampling scheme  $\tilde{P}_n = \{\BFU_1,\ldots,\BFU_n\}$ 
designed to construct an unbiased estimator of the form 
\[
\hat{\mu}_n = \frac{1}{n} \sum_{i=1}^n f(\BFU_i)
\]%
for 
\[
\mu(f)=\int_{[0,1)^s} f(\BFx)d\BFx,
\] 
where we assume each $\BFU_i$ is uniformly distributed over $[0,1)^s$ with a possible dependence structure between the $\BFU_i$'s. To assess this dependence, a key quantity of interest is
\begin{equation}
\label{eq:puvdims}
H(\BFx,\BFy;\tilde{P}_n) := \frac{2}{n(n-1)}\sum_{i =1}^{n-1}\sum_{j > i} P(\BFU_i \le \BFx,\BFU_j \le \BFy).
\end{equation}

We can think of $H(\BFx,\BFy;\tilde{P}_n)$ as the joint distribution function of a pair of (distinct) points $(\BFU_I,\BFU_J)$ randomly chosen in $\tilde{P}_n$. (Here, we use capital letters for the indices $I$ and $
J$ to make it clear the points are randomly selected.) 
%

Intuitively speaking, having negative dependence across the points of a sampling scheme $\tilde{P}_n$ is a desirable property because it implies the points are less likely to be clustered together, as they instead tend to repel each other, thus ensuring the sampling space is well covered by the points in $\tilde{P}_n$.

In this paper, negative dependence is assessed using the following concepts from \cite{Nelsen}: we say that a vector $\BFX = (X_1,\ldots,X_r)$ of  random variables is NLOD if
\[
P(X_1 \le x_1,\ldots,X_r \le x_r) \le \prod_{\ell=1}^r P(X_{\ell} \le x_{\ell}),
\]
and it is NUOD if
\[
P(X_1 > x_1,\ldots,X_r > x_r) \le \prod_{\ell=1}^r P(X_{\ell} > x_{\ell}).
\]

Note that when the dimension $r=2$, the NLOD and NUOD properties are equivalent 
but it is not necessarily  the case when $r \ge 3$.

\hchristiane{from Ref 2: explain why these 2 orthants are chosen? Thought it was obvious but I'm adding this anyway}
\hjaspar{Lemma 4.18 was trying to prove that we have negative dependence with respect to every orthant. We need NUOD because that corresponds to quasi-monotone. We prove NLOD because anchoring boxes at 0 is the traditial place to achor them (also the notation is slightly better). Remember we think of quasi-montonoe as increasing in each coordinate. The other orthants together give us the quasi-monotone equivalent of monotone in each coordinate, but since that is not a thing that people care about we did not include it.}
\hchristiane{We have to recall that $r=2s$ in our framework and we always need to pair up coordinate $j$ from each point and either request to have them both ``small'' or both ``large'', for $j=1,\ldots,s$, which means we're not looking for neg dependence on all $2^r$ orthants. The two particular orthants chosen for the definition of NUOD and NLOD carry the idea of negative dependence in the most obvious way, which is what the added sentence is addressing. The thing about pairing up coordinates and having them either big or small (i.e., considering other orthants) could come later on, if we modify Lemma 4.18 from what it currently is.}
\hjaspar{What I meant to say is that the only thing that matters is that we anchor both boxes to the same corner of $[0,1)^s$. Is this worth mentioning? Do you want me to modify Lemma \ref{lem:flipIntPdf} to include this? The statement of the Lemma would become quite cumbersome. Can you think of a nice way to state this? Maybe just a remark after Lemma \ref{lem:flipIntPdf} saying that a similar statement holds as long as both boxes are anchored to the same corner?}
\hchristiane{Does the text here need to change though? I'm understanding that the answer is no but please confirm. If the answer is no, then I'll comment out footnotes 1-4 and we can move the discussion to Lemma 4.16.}
One can think of the NLOD property as requiring that the probability that the $X_j$'s be all simultaneously small is no larger than if the $X_j$'s were independent; the NUOD property similarly requires that the probability that they be all simultaneously large is no larger than if they were independent.

If $H(\BFx,\BFy;\tilde{P}_n) \le \prod_{\ell=1}^s x_{\ell}y_{\ell}$ for all $0 \le x_{\ell},y_{\ell} \le 1$, $\ell=1,\ldots,s$, then we say $\tilde{P}_n$ is an {\em NLOD sampling scheme}.

We are also interested in the quantity
\hchristiane{Should we change $\ge$ for $>$ inside $P$? I used $\ge$ in my other paper and this is also more consistent with Lemma 4.16, where we include $(1-x,1-y)$ in integration domain. But to be consistent with above NUOD would be best to use $>$.}
\hjaspar{Lets use >}
\begin{equation}
\label{eq:puvdims_NUOD}
T(\BFx,\BFy;\tilde{P}_n) := \frac{2}{n(n-1)}\sum_{i =1}^{n-1}\sum_{j > i} P(\BFU_i > \BFx,\BFU_j > \BFy),
\end{equation}
and say that $\tilde{P}_n$ is an {\em NUOD sampling scheme} if $T(\BFx,\BFy;\tilde{P}_n)  \le \prod_{\ell=1}^s (1-x_{\ell})(1-y_{\ell})$ for all $0 \le x_{\ell},y_{\ell} \le 1$, $\ell=1,\ldots,s$.

In \cite{Lem17}, the quantity $T(\BFx,\BFy;\tilde{P}_n)$ arises in the analysis of  ${\rm Cov}(f(\BFU_I),f(\BFU_J))$, the covariance term that differentiates the variance 
of $\hat{\mu}_n$---when $\tilde{P}_n$ is a dependent sampling scheme---from that of a Monte Carlo estimator with the same number  of points $n$. More precisely, ${\rm Cov}(f(\BFU_I),f(\BFU_J))$ is such that
\[
{\rm Var}(\hat{\mu}_n) = \frac{\sigma^2}{n} + \frac{n-1}{n} {\rm Cov}(f(\BFU_I),f(\BFU_J)),
\]
where $\sigma^2 = {\rm Var}(f(\BFU))$. 
In the present work, rather than using the expression developed in \cite{Lem17} to write this covariance in terms of the survival function $T(\BFx,\BFy;\tilde{P}_n)$, we instead work with the direct representation
\begin{equation}
\label{eq:covaspdf}
\sigma_{I,J} := {\rm Cov}(f(\BFU_I),f(\BFU_J)) = \int_{[0,1]^{2s}} f(\BFx) f(\BFy) \psi(\BFx,\BFy)d\BFx d\BFy -
\int_{[0,1]^{2s}} f(\BFx) f(\BFy)d\BFx d\BFy.
\end{equation}
where $\psi(\BFx,\BFy)$ is the joint pdf of $(\BFU_I,\BFU_J)$ evaluated at $(\BFx,\BFy)$. (We set $\psi(\BFx,\BFy)=0$ if one of the coordinates of $\BFx$ or $\BFy$ is equal to 1.) In particular, this means we can also write
\hchristiane{Earlier when we define $H(\BFx,\BFy)$, we include $\BFx$ and $\BFy$ as values that can be taken by $\BFX$ and $\BFY$. Here we are exluding them. I still think we need to keep this def'n of $R(\BFx,\BFy)$ that excludes $\BFx$ and $\BFy$, and formally since we integrate wrt Lebesgue measure it doesn't matter whether or not we include the boundary. But I wonder if we need to settle this issue right from the start, i.e., say something that for the sake of convenience, we use the convention to integrate over half-open spaces in this paper?}
\hjaspar{I seem to remember that we decided on half open so that elementary intervals patition the unit square. Lets use half open intervals but remark that either would be fine because the difference is a set of measure 0.}
\begin{equation}
    \label{eq:HxyasInteg}
H(\BFx,\BFy;\tilde{P}_n) =  \int_{R(\BFx,\BFy)} \psi(\BFu,\BFv)d\BFu d\BFv,
\end{equation}
where $R(\BFx,\BFy) =\{(\BFu,\BFv) \in [0,1)^{2s}:u_j < x_j, v_j < y_j,j=1,\ldots,s\}.$ That is, $R(\BFx,\BFy) = [\mathbf{0},\BFx) \times [\mathbf{0},\BFy)$.  \hchristiane{added} \hjaspar{okay}Note that formally speaking, the definition of $H(\BFx,\BFy;\tilde{P}_n)$ given in \eqref{eq:puvdims} should lead to a closed  integration domain in \eqref{eq:HxyasInteg}. The reason why we instead integrate over a half-open interval is because it aligns better with the properties of $\psi(\BFx,\BFy)$, and is a convention we will follow throughout this paper. It is a valid approach because the boundary has measure 0, and thus the integral is unchanged whether we use a half-open interval or a closed one.
\hchristiane{Changed def'n, replacing $u_j \le x_j$ by $u_j<x_j$ and $[0,1)^{2s}$}

The joint pdf $\psi(\BFx,\BFy)$ corresponding to a base $b-$digitally scrambled point set  is the topic of Section \ref{sec:pdf}. The rest of this section develops tools to analyze this joint pdf.


\subsection{Tools to analyze the joint pdf $\psi(\BFx,\BFy)$}

\hchristiane{I removed ``finite''. I think the original concern was that $x$ or $y$ could be using a different expansion, i.e., replace a digit equal to $b-1$ by making it 0 and all the following ones equal to $b-1$. But what we say after ``i.e.'' rules out this possibility, so I think we're fine. The only thing we could do is add a remark in that direction. Thoughts?}
\hjaspar{Add a remark. I think its helpful to mention that gamma is well defined regardless of which expansion is used.}
\hchristiane{We can make it more precise that we are asking for finite base $b$ expansion but then we need $x,y \in [0,1)$. I want to check that this will not cause further problems.}
\hchristiane{I don't think we can assume expansion is finite (e.g., in light of Lemma 4.16), if so, we need to remove this assumption}
\hjaspar{$x$ has finite base $b$ expension only if the number is $x=n/b^k$}
\begin{defi}
        For $x,y\in[0,1)$, let $\gamma_b(x,y) \ge 0$ be the exact number of initial digits shared by $x$ and $y$ in their base $b$ expansion, i.e.\ the smallest number $i \ge 0$ such that
	\[
	\lfloor b^ix\rfloor=\lfloor b^iy\rfloor\quad\text{but}\quad\lfloor b^{i+1}x\rfloor \neq\lfloor b^{i+1}y\rfloor.
	\]
	\hchristiane{added following sentence and put two $\ge 0$ above. I don't think we need to explain that yes indeed we can have $\gamma_b(x,y)=0$.}
	If $x=y$ then we let $\gamma_b(x,y) = \infty$.
%
	For $\BFx,\BFy\in[0,1)^s$, we define 
	\[
	\boldsymbol{\gamma}^s_b(\BFx,\BFy) = (\gamma_b(x_1,y_1),\ldots,\gamma_b(x_s,y_s))
	\mbox{ and }
	\gamma_b(\BFx,\BFy) = \sum_{j=1}^s \gamma_b(x_j,y_j).
	\]	
	\hchristiane{added following sentence}
\end{defi}

Note that $\boldsymbol{\gamma}^s_b(\BFx,\BFy)$ denotes an $s$-dimensional vector while 	$ \gamma_b(\BFx,\BFy)$ is a scalar. \hchristiane{Added}\hjaspar{okay} Also, note that $\gamma_b(x,y)$ is well defined for any $x,y \in [0,1)$ even if  $x,y$ do not have a unique expansion in base $b$.

Given $\BFi,\BFk \in \mathbb{N}^s$, we say that $\BFk \le \BFi$ if $k_j \le i_j$ for all $j=1,\ldots,s$. (Note that in this paper, we assume that $\N$ includes 0.) 
We also denote the $\ell_1$-norm of a vector $\BFk$ by 
$|\BFk|=k_1 + \ldots + k_s$. 
For each $\BFk,\BFi \in (\mathbb{N} \cup \{\infty\})^s$ we define $C_{\BFk}^s, D_{\BFi}^s \subseteq [0,1)^{2s}$ to be the subsets
\begin{align*}
C_{\BFk}^s &= \{(\BFx,\BFy) \in [0,1)^{2s}: \BFk \le \boldsymbol{\gamma}_b^s(\BFx,\BFy)\} \qquad \mbox{ and} \\
D_{\BFi}^s &= \{(\BFx,\BFy) \in [0,1)^{2s}: \boldsymbol{\gamma}_b^s(\BFx,\BFy) = \BFi\}.
\end{align*}
Note that if $(\BFx,\BFy) \in D_{\BFi}^s$ for finite $\BFi$, we must have $\BFx \neq \BFy$.
In the special case $s=1$ we denote these sets by $C_k$ and $D_i$ respectively. It is clear that $C_{\BFk}^s = \cup_{\BFi \ge \BFk} D_{\BFi}^s$ and that the $D_{\BFi}^s$'s partition $[0,1)^{2s}$. \hchristiane{Note half open unit hypercube instead of closed}. One can easily verify that
\[
C_k = \bigcup_{a=0}^{b^k-1} \left[ \frac{a}{b^k},\frac{a+1}{b^k}\right)^2 \mbox{ and } C_{\BFk}^s = \prod_{j=1}^s C_{k_j}
\]
from which it follows that ${\rm Vol}(C_k) = b^{-k}$ and ${\rm Vol}(C_{\BFk}^s) = b^{-k}$. Finally, since $D_i = C_i \backslash C_{i+1}$ and because $D_{\BFi}^s = \prod_{j=1}^s D_{i_j}$ we have
\[
{\rm Vol}(D_{\BFi}^s) = \frac{(b-1)^s}{b^{s+|\BFi\,|}}
\]
because ${\rm Vol}(D_i) = (b-1)/b^{i+1}$.

\begin{remark}
\label{rem:ShortScrProp}
The newly introduced notation $\boldsymbol{\gamma}^s_b(\BFx,\BFy)$ and $D_{\BFi}^s$ give us a succinct way of describing 
the two properties of a base $b-$digital scramble mentioned in Section \ref{sec:backNets}. Indeed, these properties are equivalent to the following property: 
for any two scrambled points $\BFU_j,\BFU_l$ obtained from a base $b-$digital scrambling of $\BFV_j,\BFV_l$, it holds that $(\BFU_j,\BFU_l) \sim U(D_{\BFi}^s)$, where $\BFi = \boldsymbol{\gamma}^s_b(\BFV_j,\BFV_l)$. In turn, this implies that the joint pdf of a scrambled digital net in base $b$ is constant on each $D_{\BFi}^s$ (and is zero on those $D_{\BFi}^s$ for which $i = \infty$).
This latter property will be used in the analysis that follows and in the proof of Theorem \ref{FormOfJointPDF}.
\end{remark}

To prove the NLOD property one must show that
\[
\int_{R(\BFx,\BFy)}\psi(\BFu,\BFv) d\BFu d\BFv \leq \vol(R(\BFx,\BFy)) = \prod_{j=1}^s x_j y_j
\]
holds for all $\BFx,\BFy \in [0,1]^{2s}$. This integral may be written as \hchristiane{If $\BFx,\BFy$ fall on boundaries of $D_{\BFi}$ then I think we need to do a measure 0 argument to argue equality holds (or maybe not??)}
\[
\int_{R(\BFx,\BFy)}\psi(\BFu,\BFv) d\BFu d\BFv=\sum_{\BFi \in \mathbb{N}^s}^\infty V_{\BFi}^s(\BFx,\BFy)\psi_{\BFi},
\]
where
\begin{equation}
    \label{def:Vi}
V_{\BFi}^s(\BFx,\BFy) = \int_{R(\BFx,\BFy)} { 1}_{D_{\BFi}^s}(\BFu,\BFv)d\BFu d\BFv = \vol(R(\BFx,\BFy)) \cap D_{\BFi}^s)
\end{equation}

and $\psi_{\BFi}$ is the value of $\psi$ on $D_{\BFi}^s$. As before, in the special case $s=1$ we use the notation $V_i(x,y)$ or simply $V_i$  when $x$ and $y$ are fixed. We will use the fact that
\[
V_{\BFi}^s(\BFx,\BFy) = \int_{D_{i_1}} \ldots \int_{D_{i_s}} \prod_{j=1}^s {1}_{R(x_j,y_j)}(u_j,v_j) du_sdv_s \ldots du_1 dv_1 = \prod_{j=1}^s V_{i_j}(x_j,y_j)
\]
together with the following lemma (stating a result that appears in the proof of \cite[Proposition 7]{Lem17}) to simplify the calculation of $V_{\BFi}^s(\BFx,\BFy)$.

\begin{lemma}\label{FormulaForVi}
	Let $V_i=V_i(x,y)$ where $x,y\in[0,1)$ and $i \ge 0$. Then 
	\[
	V_i=\begin{cases}
	\frac{b-1}{b}\frac{\min(x,y)}{b^i}					&\text{if } \gamma_b(x,y)<i,\\
	xy-h_i(x+y-h_i-b^{-i})-\frac{\min(x,y)}{b^{i+1}}	&\text{if } \gamma_b(x,y)=i,\\
	h_{i+1}(x+y-h_{i+1}-b^{-i-1})-h_i(x+y-h_i-b^{-i})	&\text{if } \gamma_b(x,y)>i,\\
	\end{cases}
	\]
	where $h_i=\lfloor b^i\min(x,y)\rfloor b^{-i}$.
\end{lemma}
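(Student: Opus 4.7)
The plan is to exploit the decomposition $D_i = C_i \setminus C_{i+1}$, writing
\[
V_i(x,y) = W_i(x,y) - W_{i+1}(x,y), \qquad W_i(x,y) := \vol(R(x,y) \cap C_i),
\]
and then deriving a two-case closed form for $W_i(x,y)$ from which the three cases of the lemma follow by subtraction.

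To compute $W_i(x,y)$, I would assume without loss of generality that $x\le y$ (so $\min(x,y)=x$ and $h_i = \lfloor b^i x\rfloor b^{-i}$), and use the disjoint decomposition $C_i = \bigsqcup_{a=0}^{b^i-1} S_a$ with $S_a = [a/b^i,(a+1)/b^i)^2$. The squares $S_a$ contributing to $R(x,y)\cap C_i$ split into two groups: those contained entirely in $[0,x)^2$, namely the ones with $a < h_i b^i$, together contributing total volume $h_i b^i \cdot b^{-2i} = h_i b^{-i}$; and at most one ``corner'' square $S_{h_i b^i} = [h_i,h_i + b^{-i})^2$ whose intersection with $R(x,y)$ equals the rectangle $[h_i,x)\times [h_i,\min(y, h_i + b^{-i}))$, of area $(x-h_i)\bigl(\min(y,h_i+b^{-i})-h_i\bigr)$.

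The key observation is that, since $x\le y$, the inequality $y < h_i + b^{-i}$ is equivalent to $\lfloor b^i y\rfloor = \lfloor b^i x\rfloor$, i.e.\ to $\gamma_b(x,y)\ge i$. Splitting on this dichotomy and simplifying algebraically, I would obtain
\[
W_i(x,y) =
\begin{cases}
\dfrac{\min(x,y)}{b^i} & \text{if } \gamma_b(x,y) < i,\\[2mm]
xy - h_i(x+y-h_i-b^{-i}) & \text{if } \gamma_b(x,y) \ge i.
\end{cases}
\]
Applying this to both $W_i$ and $W_{i+1}$, the three regimes $\gamma_b(x,y)<i$, $\gamma_b(x,y)=i$, and $\gamma_b(x,y)>i$ (in which $\gamma_b(x,y)\ge i+1$) then give exactly the three formulas in the statement after subtraction.

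The only point needing real care will be the equivalence between $y<h_i+b^{-i}$ and $\gamma_b(x,y)\ge i$, together with the boundary cases in which $h_i=x$ or $x=y$ (where one of the factors in the ``corner'' contribution vanishes). These are routine once the definition of $\gamma_b$ is unpacked, but they are what make the case split of the lemma nontrivial, so I would be explicit about them. The symmetry in $(x,y)$ of both the statement and $V_i$ then removes the initial assumption $x\le y$.
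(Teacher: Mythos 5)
Your proof is correct: I checked the decomposition $V_i=W_i-W_{i+1}$ with $W_i=\vol(R(x,y)\cap C_i)$, the two-case formula for $W_i$ (including the equivalence of $y<h_i+b^{-i}$ with $\gamma_b(x,y)\ge i$ when $x\le y$), and the three subtractions, and everything matches the stated formulas. The paper itself gives no proof of this lemma, deferring to the proof of Proposition 7 in \cite{Lem17}, which carries out essentially the same direct geometric volume computation, so your argument is in line with the intended one.
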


We handle the special case where at least one of $x$ or $y$ equals 1 in the following lemma.

\begin{lemma}
\label{lem:FormViOne}
For any $x \in [0,1]$ and $i \ge 0$,  $V_i(x,1)=V_i(1,x) = xV_i(1,1)$, where $V_i(1,1) = {\rm Vol}(D_i) = (b-1)/b^{i+1}$.
\end{lemma}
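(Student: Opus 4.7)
The plan is to reduce the computation of $V_i(x,1)$ to a one-dimensional integral via Fubini's theorem and then exploit the product structure of $D_i$ as a disjoint union of squares of the form $[a/b^i,(a+1)/b^i)^2$ with the diagonal sub-squares $[a/b^{i+1},(a+1)/b^{i+1})^2$ removed.

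More precisely, I would write
\[
V_i(x,1) = \int_0^x \int_0^1 \mathbf{1}_{D_i}(u,v)\, dv\, du = \int_0^x \mu_i(u)\, du,
\]
where $\mu_i(u) := \mathrm{Vol}\bigl(\{v \in [0,1) : \gamma_b(u,v) = i\}\bigr)$. For any fixed $u \in [0,1)$, the set $\{v \in [0,1) : \gamma_b(u,v) \ge i\}$ is exactly the elementary interval $[\lfloor b^i u\rfloor/b^i,(\lfloor b^i u\rfloor+1)/b^i)$ of length $b^{-i}$, and the subset $\{v : \gamma_b(u,v) \ge i+1\}$ is the elementary sub-interval of length $b^{-i-1}$. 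Taking the difference shows that $\mu_i(u) = b^{-i} - b^{-i-1} = (b-1)/b^{i+1}$, which is in particular independent of $u$. Substituting into the above integral yields
\[
V_i(x,1) = x\cdot\frac{b-1}{b^{i+1}}.
\]

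Setting $x=1$ recovers $V_i(1,1) = (b-1)/b^{i+1}$, which also agrees with the formula $\mathrm{Vol}(D_i) = (b-1)/b^{i+1}$ quoted just before the lemma. Therefore $V_i(x,1) = x\,V_i(1,1)$, as claimed. The identity $V_i(1,x) = x\,V_i(1,1)$ follows by the same argument with the roles of $u$ and $v$ interchanged (equivalently, because $D_i$ is symmetric under swapping the two coordinates and $R(x,1)$ becomes $R(1,x)$ under this swap).

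No serious obstacle is anticipated: the only subtle point is verifying that $\mu_i(u)$ is exactly $(b-1)/b^{i+1}$ for every $u \in [0,1)$, including the measure-zero set of $u$ with two base-$b$ expansions; this is handled by the observation made after the definition of $\gamma_b$ that its value does not depend on which expansion is used, so the description of $\{v:\gamma_b(u,v)\ge i\}$ as an elementary interval of length $b^{-i}$ is valid pointwise.
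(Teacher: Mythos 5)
Your proof is correct and follows essentially the same route as the paper's: both apply Fubini to reduce $V_i(x,1)$ to $\int_0^x \mu_i(u)\,du$ and observe that the slice measure $\mu_i(u)$ is constantly $(b-1)/b^{i+1}$, the only cosmetic difference being that you obtain this constant as the difference of two nested elementary-interval lengths while the paper counts the free digits directly. The symmetry argument for $V_i(1,x)$ matches as well.
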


\begin{proof}
First, it is clear from the definition of $V_i(1,1)$ that it equals ${\rm Vol}(D_i)$. Next, for $u \in [0,1)$ we define
\[
D_i(u) = \{ y \in [0,1): \gamma_b(u,y)=i\}.
\]
Elements of $D_i(u)$ have the same first $i$ digits as $u$, differ on the $(i+1)$st digit, with the remaining digits being free. Hence the length of $D_i(u)$ is $(b-1)/b^{i+1}$. Next, we evaluate
\begin{align*}
V_i(x,1) &= \int_{R(x,1)} {1}_{D_i}(u,v)dudv = \int_0^x \int_0^1{1}_{D_i}(u,v)dudv = \int_0^x \int_0^1 {1}_{D_i(u)}(v) dvdu\\
&=\int_0^x \frac{b-1}{b^{i+1}} du = x \frac{b-1}{b^{i+1}}.
\end{align*}
Clearly, a similar argument can be made to compute $V_i(1,x)$.
\end{proof}

We only need the above formulas to prove the following technical lemma, which gives us a critical relation between $V_i$ and $V_{i+1}$.  While its proof (found in the appendix) is rather tedious, it is not hard, we simply use Lemmas \ref{FormulaForVi} and \ref{lem:FormViOne}, and carefully work through the cases.

\begin{lemma}\label{TechnicalLemma}
Let $x,y\in[0,1]$ be given and $V_i=V_i(x,y)$ be defined as in \eqref{def:Vi}. Then  $bV_{i}-V_{i-1}\geq0$ for all $i\geq1$. 
\end{lemma}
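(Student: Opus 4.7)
The plan is to reduce the general inequality to a single base case $i = 1$ via the self-similar structure of elementary intervals, and then verify that base case by direct computation. Throughout, let $S_a^{(j)} := [a/b^j, (a+1)/b^j)^2$ denote the level-$j$ diagonal squares, so that $C_j = \bigsqcup_a S_a^{(j)}$ and $D_{j-1}, D_j \subseteq C_{j-1}$ for $j \geq 1$.

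\textbf{Reduction to $i = 1$.} Fix $i \geq 2$ and partition $R(x, y) = [0, x) \times [0, y)$ according to its intersection with the squares $S_a^{(i-1)}$: each non-empty intersection is either all of $S_a^{(i-1)}$ (``full'') or a single corner box anchored at $(h_{i-1}, h_{i-1})$ (``partial''). A direct count gives $\mathrm{Vol}(D_i \cap S_a^{(i-1)}) = (b-1)/b^{2i}$ and $\mathrm{Vol}(D_{i-1} \cap S_a^{(i-1)}) = (b-1)/b^{2i-1}$, so $b \cdot \mathrm{Vol}(D_i \cap S_a^{(i-1)}) = \mathrm{Vol}(D_{i-1} \cap S_a^{(i-1)})$, and full squares cancel in $bV_i - V_{i-1}$. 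The rescaling $\phi(t) = b^{i-1}(t - h_{i-1})$ sends the partial square bijectively to $[0,1)^2$ and, by self-similarity of the elementary-interval structure, maps $D_i \cap S_a^{(i-1)}$ onto $D_1$ and $D_{i-1} \cap S_a^{(i-1)}$ onto $D_0$, while the partial box is sent to $[0, \tilde u) \times [0, \tilde v)$ with $\tilde u, \tilde v \in [0, 1]$. Since volumes rescale by $b^{-2(i-1)}$, the partial square contributes exactly $b^{-2(i-1)} (bV_1(\tilde u, \tilde v) - V_0(\tilde u, \tilde v))$ to $bV_i - V_{i-1}$. It therefore suffices to establish the base case $bV_1(x, y) \geq V_0(x, y)$ for all $x, y \in [0, 1]$.

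\textbf{Base case.} The cases $x = 1$ or $y = 1$ are immediate from Lemma \ref{lem:FormViOne}, which yields equality. For $x, y \in [0, 1)$ I split on $\gamma := \gamma_b(x, y)$. When $\gamma = 0$, Lemma \ref{FormulaForVi} gives $V_0 = xy - \min(x,y)/b$ and $V_1 = (b-1)\min(x,y)/b^2$, so $bV_1 - V_0 = \min(x,y)(1 - \max(x,y)) \geq 0$. When $\gamma \geq 1$, write $x = n/b + u/b$ and $y = n/b + v/b$ with $n \in \{0, \ldots, b-1\}$ and $u, v \in [0, 1)$. Decomposing $R(x, y)$ as $[0, n/b)^2 \cup \text{arms} \cup (R(x, y) \cap [n/b, (n+1)/b)^2)$ (noting that the arms lie entirely in $D_0$, and the last piece rescales by $b$ to $[0, u) \times [0, v) \subseteq [0, 1)^2$) produces the clean identity
\[
b V_1(x, y) - V_0(x, y) = \frac{n (b - n - u - v)}{b^2} + \frac{V_0(u, v)}{b}.
\]

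\textbf{Finishing and obstacle.} A sub-case analysis on $\gamma_b(u, v)$ establishes nonnegativity of the displayed expression. If $\gamma_b(u, v) = 0$, substituting $V_0(u, v) = uv - \min(u, v)/b$ (WLOG $u \leq v$) gives $b^2(bV_1 - V_0) = n(b - n - v) + u(bv - n - 1)$; this is nonnegative when $bv \geq n+1$, and otherwise $\gamma_b(u, v) = 0$ forces $u < n/b$, from which a short estimate bounds the expression below by $(n/b)(b+1)(b-1-n) \geq 0$. If $\gamma_b(u, v) \geq 1$ with common first digit $k$, substituting $V_0(u, v) = (k/b)(u + v - (k+1)/b)$ and using $u + v < 2(k+1)/b$ reduces the claim to the elementary integer inequality $n b (b - n) \geq (k+1)(2n - k)$ for $0 \leq k \leq n - 1 \leq b - 1$, which follows from $b(b - n) \geq n + 1$ together with the fact that $(k+1)(2n - k)$ is maximized on $\{0, \ldots, n-1\}$ at $k = n - 1$ with value $n(n+1)$. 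The main obstacle is the combinatorial proliferation of sub-cases when $\gamma \geq i$; the self-similar reduction is precisely the device that tames this by localizing everything to a single base-level problem.
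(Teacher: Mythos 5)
Your reduction to $i=1$ via the self-similar structure of the diagonal squares is correct and genuinely different from the paper's proof, which instead runs a direct four-case analysis on $\gamma_b(x,y)$ relative to $i$ (with several algebraic sub-cases each) using the closed forms of Lemma \ref{FormulaForVi}. Your localization to a single base-level inequality is the cleaner device: the cancellation $b\,\vol(D_i\cap S_a^{(i-1)})=\vol(D_{i-1}\cap S_a^{(i-1)})$ on full squares and the rescaling of the one partial square are both verified correctly, your identity
\[
bV_1(x,y)-V_0(x,y)=\frac{n(b-n-u-v)}{b^2}+\frac{V_0(u,v)}{b}
\]
checks out against Lemma \ref{FormulaForVi}, and the cases $\gamma_b(x,y)=0$ and $\gamma_b(u,v)=0$ are handled correctly.

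There is, however, a gap in the final sub-case. When $\gamma_b(u,v)\ge 1$ with common first digit $k=\lfloor bu\rfloor=\lfloor bv\rfloor$, the quantity to be shown nonnegative is
\[
n(b-n)-(n-k)(u+v)-\frac{k(k+1)}{b},
\]
and the bound $u+v<2(k+1)/b$ yields a valid \emph{lower} bound on the middle term only when its coefficient $-(n-k)$ is nonpositive, i.e.\ when $k\le n$. You restrict to $0\le k\le n-1$, but $k$ is just the second base-$b$ digit of $x$ while $n$ is the first, so $k>n$ occurs freely. For instance, with $b=3$, $x=5/9$, $y=16/27$ one gets $n=1$, $k=2$, $u+v=13/9$, and $-(n-k)(u+v)=13/9<2=-(n-k)\cdot 2(k+1)/b$, so the claimed reduction to $nb(b-n)\ge(k+1)(2n-k)$ is not a valid implication there (even though the conclusion happens to hold). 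The repair is short: for $k\ge n$ use the opposite estimate $u+v\ge 2k/b$, which reduces the claim to $nb(b-n)\ge k(2n+1-k)$; the right-hand side is maximized over integers $k\ge n$ at $k=n$ with value $n(n+1)$, and $nb(b-n)\ge n(n+1)$ follows from $b(b-n)\ge b\ge n+1$ (the case $n=0$ being trivial since $k(1-k)\le 0$). With that sub-case added, your proof is complete and, in my view, more transparent than the computation in the appendix.
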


\section{The joint pdf of scrambled point sets}
\label{sec:pdf}

We first introduce some notation that will be helpful for important counting arguments that are needed to derive the joint pdf of scrambled point sets. 

\hchristiane{Made this more general; not sure if we should use $P_n$ instead of $\tilde{P}_n$. Also, should probably put subscripts $b$ for $m(\BFk,)$ and $n(\BFi,)$ Below I refer to a digital net that is deterministic or scrambled. We may also want to clarify this. I.e., we could say from the start that a digital net may be deterministic or scrambled, and that when we need to specify which of the two it is, we will.}
\begin{defi}\label{def:mknipn}
Let $P_n =\{\BFU_1,\ldots,\BFU_n\}$ be a point set in $[0,1)^s$ and $b \ge 2$ be an integer. 

\begin{enumerate}
\item Let $m_b(\BFk;P_n,\BFU_l)$ be the number of points $\BFU_j \in  P_n$ with $j \neq l$ satisfying $\boldsymbol{\gamma}_b^s(\BFU_l,\BFU_j) \ge \BFk$. If these numbers are the same for all $\BFU_l$ then we write $m_b(\BFk;P_n,\BFU_l) =
m_b(\BFk;P_n)$.
\item Let $M_b(\BFk;P_n)$ be the number of ordered pairs of distinct points $(\BFU_l,\BFU_j)$ in $P_n$ such that $\boldsymbol{\gamma}_b^s(\BFU_l,\BFU_j) \ge \BFk$. 
\item Let $n_b(\BFi;P_n,\BFU_l)$  be the number of points $\BFU_j \in P_n$ satisfying $\boldsymbol{\gamma}_b^s(\BFU_l,\BFU_j) = \BFi$. If these numbers are the same for all $\BFU_l$ we write $n_b(\BFk;P_n,\BFU_l) = n_b(\BFk;P_n)$.
\item Let $N_b(\BFi;P_n)$ be the number of ordered pairs $(\BFU_l,\BFU_j)$ in $P_n$ such that $\boldsymbol{\gamma}_b^s(\BFU_l,\BFU_j) = \BFi$.
\item In the special  case where  $P_n$ is a digital $(t,m,s)$-net (deterministic or scrambled), we let $r(\BFk)$ be the rank of the matrix formed by the first $k_j$ rows of the generating matrices $C_j$, $j=1,\ldots,s$. (If $\BFk= \mathbf{0}$ then we set $r(\BFk) = 0$.)
\end{enumerate}
\end{defi}

\begin{remark}
\label{rem:MbNb}
A few observations are in order:
\begin{enumerate}
\item \hchristiane{Changed} The integer $b$ used in the above definitions does not need to be equal to the base used to construct the point set $P_n$. This is the reason why we highlight it as a subscript in the above definitions. 
\hchristiane{Following sentence added based on Ref 2}
 \hjaspar{Did you want to change this as we discussed?}
\item It is well known (see e.g., \cite{rNIE92b}) that $r(\BFk) = |\BFk|$ if and only if $P_n$ is $\BFk$-equidistributed.
\hchristiane{added}
\item The quantity $m_b(\BFk;P_n,\BFU_l)$ is also equal to  the number of points in $P_n$ (other than $\BFU_l$) that are in the same $\BFk-$elementary interval as $\BFU_l$. Similarly, $M_b(\BFk;P_n)$ is the number of ordered pairs of distinct points from $P_n$ that lie in the same $\BFk-$elementary intervals.
\item The quantity $N_b(\BFi;P_n)$ corresponds to the number of (ordered) pairs of distinct points that are in $D_{\BFi}^s$.
\item Clearly the following relationships hold:
\[
M_b(\BFk;P_n) = \sum_{l=1}^n m_b(\BFk;P_n,\BFU_l) \mbox{ and } N_b(\BFi;P_n) = \sum_{l=1}^n n_b(\BFi;P_n,\BFU_l).
\]
\item As can be inferred from Remark \ref{rem:ShortScrProp}, if $(\BFU_j,\BFU_l)$ are the two points obtained  after applying a base $b-$digital scramble to $(\BFV_j,\BFV_l)$, then $\boldsymbol{\gamma}^s_b(\BFV_j,\BFV_l) = \boldsymbol{\gamma}^s_b(\BFU_j,\BFU_l)$. Since all the counting numbers introduced in Definition \ref{def:mknipn} are entirely determined by the function $\boldsymbol{\gamma}^s_b(\cdot,\cdot)$, it means these numbers are the same for $P_n$ and ${}_b\tilde{P}_n$. 
\end{enumerate}
\end{remark}

In what follows we will make use of the following relations between the above quantities.

\begin{prop}
\label{prop:nbInTermsMb}
The quantities introduced in Definition \ref{def:mknipn} satisfy:
\hchristiane{Still not 100\% sure but I think we need the + 1 in there}
\begin{align*}
n_b(\BFk;P_n,\BFU_l) &= \sum_{\BFe \in \{0,1\}^s} (-1)^{|\BFe|} m_b(\BFk+\BFe;P_n,\BFU_l) \mbox{ and }\\
N_b(\BFk;P_n,\BFU_l) &= \sum_{\BFe \in \{0,1\}^s} (-1)^{|\BFe|} M_b(\BFk+\BFe;P_n,\BFU_l).
\end{align*}
\end{prop}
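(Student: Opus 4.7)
The plan is a straightforward inclusion-exclusion over the $s$ coordinates, starting from the observation that the equality $\boldsymbol{\gamma}_b^s(\BFU_l,\BFU_j) = \BFk$ can be written as a product of events of the form ``$\gamma_b(U_{l,i},U_{j,i}) \ge k_i$ and not $\gamma_b(U_{l,i},U_{j,i}) \ge k_i+1$'' over $i = 1,\ldots,s$.

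First I would establish the identity pointwise. Fix $l \in \{1,\ldots,n\}$ and a point $\BFU_j$ with $j \neq l$. For each coordinate $i$, the indicator of $\gamma_b(U_{l,i},U_{j,i}) = k_i$ satisfies
\[
\mathbf{1}[\gamma_b(U_{l,i},U_{j,i}) = k_i] = \mathbf{1}[\gamma_b(U_{l,i},U_{j,i}) \ge k_i] - \mathbf{1}[\gamma_b(U_{l,i},U_{j,i}) \ge k_i+1].
\]
Taking the product over $i=1,\ldots,s$, the indicator of $\boldsymbol{\gamma}_b^s(\BFU_l,\BFU_j) = \BFk$ equals
\[
\prod_{i=1}^s \bigl(\mathbf{1}[\gamma_b \ge k_i] - \mathbf{1}[\gamma_b \ge k_i+1]\bigr) = \sum_{\BFe \in \{0,1\}^s}(-1)^{|\BFe|}\, \mathbf{1}[\boldsymbol{\gamma}_b^s(\BFU_l,\BFU_j) \ge \BFk + \BFe],
\]
where the expansion uses independence of the factors across coordinates (they involve distinct pairs $(U_{l,i},U_{j,i})$), so there are no cross terms.

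Next I would sum the resulting identity over all $j \neq l$. By Definition \ref{def:mknipn}, the left-hand side becomes $n_b(\BFk;P_n,\BFU_l)$, while on the right the sum of $\mathbf{1}[\boldsymbol{\gamma}_b^s(\BFU_l,\BFU_j) \ge \BFk + \BFe]$ over $j \neq l$ is exactly $m_b(\BFk + \BFe;P_n,\BFU_l)$. This yields the first claimed identity.

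For the second identity, I would sum the first identity over $l = 1,\ldots,n$ and invoke the relationships noted in Remark \ref{rem:MbNb}, namely $M_b(\BFk;P_n) = \sum_l m_b(\BFk;P_n,\BFU_l)$ and $N_b(\BFk;P_n) = \sum_l n_b(\BFk;P_n,\BFU_l)$, and interchange the finite sum over $\BFe \in \{0,1\}^s$ with the sum over $l$. There is no real obstacle here beyond bookkeeping; the only subtle point is to ensure that the product of indicator differences expands cleanly, which it does because each factor depends on a separate coordinate.
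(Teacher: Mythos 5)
Your proof is correct and is essentially the same inclusion--exclusion argument as the paper's, just phrased algebraically via telescoping indicator functions over the $s$ coordinates rather than set-theoretically via the elementary intervals $I_{\BFk}^l$ (no probabilistic independence is actually needed for the product expansion --- it is a purely algebraic identity for indicators of intersections). A minor bonus of your formulation is that by summing only over $j \neq l$ from the outset you avoid the $+1$ bookkeeping and its cancellation via $\sum_{\BFe \in \{0,1\}^s}(-1)^{|\BFe|}=0$ that the paper's proof must carry through.
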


\begin{proof}
Fix $\BFU_l \in P_n$ and for each $\BFk \in \mathbb{N}^s$ let $I_{\BFk}^l$ denote the elementary $\BFk-$interval that contains $\BFU_l$. Since a point $\BFU_j \in P_n \cap I_{\BFk}^l$ satisfies $\boldsymbol{\gamma}_b^s(\BFU_l,\BFU_j)= \BFk$ if and only if $\BFU_j$ is not in any $I_{\BFk+\BFe}^l$, where $\BFe \in \{0,1\}^s$ with $|\BFe|=1$, it holds  that $n_b(\BFk;P_n,\BFU_l)$ counts the number of points from $P_n$ that are in the set 
\[
I_{\BFk}^l \backslash \left(\bigcup_{\BFe \in \{0,1\}^s,\, {|\BFk|}={|\BFe|}} I_{\BFk+\BFe}^l\right).
\] 
\hchristiane{I changed $n$ for $r$ to not confuse with nb of points}
Note that $\BFU_l$ is not in the above set.
To apply the Principle of Inclusion-Exclusion, we observe that the intersection of any $r$ distinct  elementary intervals in the above union is an elementary interval of the form $I_{\BFk+\BFe}^l$, where $\BFe \in \{0,1\}^s$ with ${|\BFe|}=r$, and that $m_b(\BFk+\BFe;P_n,\BFU_l)+1$ counts the number of points in $I_{\BFk+\BFe}^l$. Thus by the Principle of Inclusion-Exclusion we have
\begin{align*}
n_b(\BFk;P_n,\BFU_l) &= \sum_{r=0}^s \sum_{\substack{\BFe \in \{0,1\}^s \\ {|\BFe|}=r}} (-1)^{r} (m_b(\BFk+\BFe;\tilde{P}_n,\BFU_l)+1)\\
&= \sum_{\BFe \in \{0,1\}^s} (-1)^{|\BFe|} (m_b(\BFk+\BFe;P_n,\BFU_l)+1) = 
\sum_{\BFe \in \{0,1\}^s} (-1)^{|\BFe|} m_b(\BFk+\BFe;P_n,\BFU_l),
\end{align*}
where the last equality follows from the fact that $\sum_{\BFe \in \{0,1\}^s} (-1)^{|\BFe|} = 0$.
The expression for $N_b(\BFk;P_n)$ follows from Remark \ref{rem:MbNb}.4.
\end{proof}

The next result provides expressions for the counting numbers $m_b(\BFk;P_n)$ and $n_b(\BFi;P_n)$ in the special case of scrambled digital $(t,m,s)$-nets and scrambled nets with $t=0$.  

\hchristiane{Making note that for this result, we assume $b$ is also the constructing base}
\begin{lemma}\label{lem3p2}
Let ${}_b\tilde{P}_n$ be a scrambled digital $(t,m,s)$-net in base $b$. Then 
\begin{align*}
&\mbox{(i) } m_b(\BFk;{}_b\tilde{P}_n) = b^{m-r(\BFk)}-1 \\
&\mbox{(ii) } n_b(\BFi;{}_b\tilde{P}_n) = \sum_{\BFe \in \{0,1\}^s} (-1)^{|\BFe|} b^{m-r(\BFi+\BFe)} \mbox{  and}\\
&\mbox{(iii) }  N_b(\BFi;{}_b\tilde{P}_n) = b^m n_b(\BFi;{}_b\tilde{P}_n).
\end{align*}
For scrambled $(0,m,s)-$nets in base $b$, we have
\begin{align*}
&\mbox{(iv) } m_b(\BFk;{}_b\tilde{P}_n) = \max(b^{m-|\BFk|}-1,0) \mbox{ and }\\
&\mbox{(v) } n_b(\BFi;{}_b\tilde{P}_n) =  \sum_{k=0}^s (-1)^k \binom{s}{k} \max(b^{m-{|\BFi\,|}-k},1).
\end{align*}
\end{lemma}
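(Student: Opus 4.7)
The plan is to reduce everything to a counting question about the deterministic digital net $P_n$ that underlies ${}_b\tilde{P}_n$, using Remark \ref{rem:MbNb}.6, which tells us that all the counting numbers depend only on $\boldsymbol{\gamma}_b^s(\cdot,\cdot)$ and are therefore preserved by scrambling. So for (i) it suffices to count, for a fixed $\BFV_l \in P_n$, the number of $\BFV_j$ with $j \neq l$ such that $\boldsymbol{\gamma}_b^s(\BFV_l,\BFV_j) \ge \BFk$. By the digital construction, this is exactly the condition that the $|\BFk| \times m$ matrix $A(\BFk)$ obtained by stacking the first $k_p$ rows of $C_p$, $p=1,\ldots,s$, sends the digit expansions of $l$ and $j$ to the same image; equivalently, $\BFi_l - \BFi_j$ lies in the kernel of $A(\BFk)$. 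Since $A(\BFk)$ has rank $r(\BFk)$, its kernel has $b^{m-r(\BFk)}$ elements, and excluding $j=l$ gives (i). This count is independent of $\BFV_l$, which justifies writing $m_b(\BFk;{}_b\tilde{P}_n)$ without reference to a basepoint.

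For (ii), I will plug (i) into Proposition \ref{prop:nbInTermsMb}. The resulting sum has a constant term $-\sum_{\BFe\in\{0,1\}^s}(-1)^{|\BFe|}$ which vanishes by the binomial identity, leaving exactly the stated expression. Statement (iii) is then immediate from the relation $N_b(\BFi;P_n)=\sum_{l=1}^n n_b(\BFi;P_n,\BFV_l)$ in Remark \ref{rem:MbNb}.5, combined with the fact established in (i)--(ii) that $n_b(\BFi;P_n,\BFV_l)$ does not depend on $l$, so that the sum is simply $n \cdot n_b(\BFi;P_n) = b^m n_b(\BFi;P_n)$.

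Parts (iv) and (v) specialize (i) and (ii) to the $t=0$ case; the key step is the identity $r(\BFk)=\min(|\BFk|,m)$ for a $(0,m,s)$-net. When $|\BFk|\le m$, this follows from Remark \ref{rem:MbNb}.2 since a $(0,m,s)$-net is $\BFk$-equidistributed whenever $|\BFk|\le m$. When $|\BFk|>m$, choose a componentwise smaller vector $\BFk'\le\BFk$ with $|\BFk'|=m$ (always possible by reducing some coordinates); then $A(\BFk')$ is a row-submatrix of $A(\BFk)$ of rank $m$, forcing $r(\BFk)\ge m$, while the trivial bound $r(\BFk)\le m$ comes from $A(\BFk)$ having only $m$ columns. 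Substituting into (i) gives $b^{m-|\BFk|}-1$ when $|\BFk|\le m$ and $0$ otherwise, which is exactly $\max(b^{m-|\BFk|}-1,0)$.

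Finally for (v), I will apply Proposition \ref{prop:nbInTermsMb} using (iv). Rewriting $\max(b^{m-|\BFi|-|\BFe|}-1,0)=\max(b^{m-|\BFi|-|\BFe|},1)-1$ and again killing the constant $-1$ via $\sum_{\BFe}(-1)^{|\BFe|}=0$ reduces the sum to $\sum_{\BFe\in\{0,1\}^s}(-1)^{|\BFe|}\max(b^{m-|\BFi|-|\BFe|},1)$, which, regrouped by the value of $|\BFe|=k$, yields the stated binomial sum. The only nontrivial step in the whole argument is the rank computation $r(\BFk)=\min(|\BFk|,m)$ for a $(0,m,s)$-net in the regime $|\BFk|>m$; everything else is an algebraic bookkeeping exercise once Proposition \ref{prop:nbInTermsMb} is available.
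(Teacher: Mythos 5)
Your proof is correct and follows essentially the same route as the paper's: part (i) by a linear-algebra count, parts (ii), (iii), (v) by feeding (i)/(iv) into Proposition \ref{prop:nbInTermsMb} and the relation $N_b(\BFi;P_n)=\sum_l n_b(\BFi;P_n,\BFU_l)$, and part (iv) as a specialization. The one place where you genuinely diverge is in how (i) and (iv) are handled, and the difference is worth recording. For (i), the paper works directly with the scrambled points: it constructs a bijection ${\cal S}_{\BFk}$ that the scramble induces on the labels of elementary $\BFk$-intervals, composes it with the map $T_{\BFk}$ given by the stacked rows of the generating matrices, and counts the fiber. You instead invoke Remark \ref{rem:MbNb}.6 to transfer the count to the deterministic net and then count the kernel of $A(\BFk)$ acting on index vectors; this is the same underlying fact (fibers of the rank-$r(\BFk)$ map have $b^{m-r(\BFk)}$ elements) but your reduction is cleaner, since the bijection argument is exactly what Remark \ref{rem:MbNb}.6 packages. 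For (iv)--(v), however, there is a scope difference: you derive (iv) from (i) via the identity $r(\BFk)=\min(|\BFk|,m)$, which presupposes the net is digital (otherwise $r(\BFk)$ is not even defined), whereas the paper's proof of (iv) argues directly from the definition of $\BFk$-equidistribution and explicitly covers $(0,m,s)$-nets that are \emph{not} constructed by the digital method --- a generality the paper relies on later (e.g.\ in the second formula of Lemma \ref{lem:formCkiffequid}'s companion result for general $(0,m,s)$-nets). Your rank computation for the regime $|\BFk|>m$ is correct as far as it goes, but to match the paper's intended statement you should replace it (or supplement it) with the two-line definitional argument: for $|\BFk|\le m$ every elementary $\BFk$-interval contains exactly $b^{m-|\BFk|}$ points, and for $|\BFk|>m$ every elementary $\BFk$-interval sits inside one with $|\BFk'|=m$, which contains exactly one point, so $m_b(\BFk;{}_b\tilde{P}_n)=0$.
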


\hchristiane{I added some details to handle the fact that we are working with the scrambled points rather than the deterministic ones, as per referee request}
\begin{proof}
Consider the partition  of $[0,1)^s$ \hchristiane{half open instead of closed} into elementary $\BFk-$intervals of the form $I_{\BFk}(\BFa)$ with $0 \le a_j < b^{k_j}$, $j=1,\ldots,s$ (defined in \eqref{eq:elemInt}) and let $\BFa_{\ell}$ denote the $|\BFk|$-dimensional integer vector corresponding to the elementary interval $I_{\BFk}(\BFa_{\ell})$ (which we denote $I_{\BFk}^{\ell}$ for short) from this partition in which $\BFU_{\ell}$ lies. Next, we observe that from the properties of the scrambling described in Section \ref{sec:backNets}, there exists a bijection ${\cal S}_{\BFk}:\mathbb{F}_b^k \rightarrow \mathbb{F}_b^k$  such that if the deterministic point $\BFV_{\ell} \in I_{\BFk}(\BFa)$ then its scrambled version $\BFU_{\ell} \in I_{\BFk}({\cal S}_{\BFk}(\BFa)) = I_{\BFk}(\BFa')$ for some $0 \le \BFa' < b^{|\BFk|}$. Indeed, ${\cal S}_{\BFk}$ is injective because if $\BFV_{\ell} \in I_{\BFk}(\BFa)$ and $\BFV_j \in I_{\BFk}(\BFa')$ with $\BFa \neq \BFa'$, then $\BFU_{\ell}$ and $\BFU_j$ cannot be in the same elementary intervals (otherwise Property  2 of the scrambling would not hold); it is surjective because for each $\BFa' \in \mathbb{F}_b^k$ there has to be an $\BFa \in \mathbb{F}_b^k$ such that ${\cal S}_k(\BFa) = \BFa'$ otherwise Property 1 (uniformity) of the scrambling would not hold. Furthermore, this $\BFa$ is well defined since if $\BFU_{\ell},\BFU_j \in I_{\BFk}(\BFa')$ then $\BFV_{\ell},\BFV_j \in I_{\BFk}(\BFa)$. Then let $T_{\BFk}$ be the linear transformation from $\mathbb{F}_b^m$ to $\mathbb{F}_b^k$ determined by the $|\BFk|$ rows formed by the  first $k_j$ rows of $C_j$, for $j=1,\ldots,s$. Given that $\BFU_{\ell}$ lies in $I_{\BFk}^{\ell}$, it means $\BFa_{\ell}$ is in the image of   ${\cal S}_{\BFk} \circ T_{\BFk}$.  Note that the dimension of the null space of $T_{\BFk}$ is $m-r(\BFk)$, and ${\cal S}_{\BFk}$ is a bijection. Thus the number of points in $I_{\BFk}^{\ell}$ (including $\BFU_{\ell}$) is given by $b^{m-r(\BFk)}$, which corresponds to $m_b(\BFk;{}_b\tilde{P}_n)+1$. This shows (i). \hchristiane{I don't think we need the max because $r(\BFk) \le \min(m,k)$.}
    
To obtain (ii) 
we use Proposition \ref{prop:nbInTermsMb}, whose proof shows that 
\[
n_b(\BFi;{}_b\tilde{P}_n) 
= \sum_{\BFe \in \{0,1\}^s} (-1)^{|\BFe|} (m_b(\BFi+\BFe;{}_b\tilde{P}_n,\BFU_l)+1) = 
\sum_{\BFe \in \{0,1\}^s} (-1)^{|\BFe|} m_b(\BFi+\BFe;{}_b\tilde{P}_n,\BFU_l).
\]
To obtain (iii) we use  Remark \ref{rem:MbNb} (item 4).

For a general scrambled $(0,m,s)-$net in base $b$ (not necessarily constructed using the digital method), by definition we have $m_b(\BFk;{}_b\tilde{P}_n)=b^{m-|\BFk|}-1$ for $|\BFk| \le m$ and is 0 otherwise. This gives (iv). 
To get (v), we use the fact that there are exactly $\binom{s}{r}$ vectors $\BFk \in \{0,1\}^s$ with $|\BFk|=r$, together with the expression for $m_b(\BFk;{}_b\tilde{P}_n,\BFU_l)$, to get
\begin{align*}
n_b(\BFi;{}_b\tilde{P}_n) &= \sum_{r=0}^s \sum_{\substack{\BFk \in \{0,1\}^s\\ {|\BFk|}=r}} (-1)^{r} \max (b^{m-{|\BFi |}-r},1)
= \sum_{k=0}^s (-1)^k \binom{s}{k} \max(b^{m-{|\BFi\,|}-k},1). 
\end{align*}
\end{proof}

\begin{remark}
\label{rem:NoRepeatCoord}
\hchristiane{Made this more general. Or should we say that we assume $|P_n^j|=n$ for each $j$ (as we do in one of the lemmas)?. Could then remove this assumption in every statement of subsequent results.}
In the remainder of this paper, we assume we are working with point sets $P_n$ such that the $j$th coordinate of the points are all distinct, for $j=1,\ldots,s$. Equivalently, this means  we assume every pair of distinct points has a bounded number of common digits, i.e., we assume $\gamma_b(\BFV_i,\BFV_j) < \infty$ for all $i \neq j$.
The properties of a base $b-$digital scramble imply that  we also have $\gamma_b(\BFU_i,\BFU_j) < \infty$ for 
${}_b\tilde{P}_n =\{\BFU_1,\ldots,\BFU_n\}$, and that  the $j$th coordinate of the points from ${}_b\tilde{P}_n$ are all distinct, for $j=1,\ldots,s$. 
Doing so avoids the case where we have two points with equal coordinates in one or more dimension, which would in turn give a non-zero probability measure to a set of zero Lebesgue measure in $[0,1)^{2s}$, namely  
on $D_{\BFi}^s$ with $|\BFi\,|=\infty$.
\hchristiane{added: ref 2 said Sobol but also true for Faure}
We note that the first $n$ points of both Sobol' and Faure sequences have this property.
\end{remark}

\hchristiane{Should we also index the pdf with $b$?}
\begin{thm}\label{FormOfJointPDF}
	Let ${}_b\tilde P_n=\{\BFU_1,\ldots,\BFU_n\}$ be a point set obtained by applying a base $b-$digital scramble to 
	$P_n = \{\BFV_1,\ldots,\BFV_n\}$. Assume ${P}_n$ 
	is such that  the $j$th coordinate of the $n$ points are all distinct,  for each $j=1,\ldots,s$. 
    For $\BFx,\BFy \in [0,1)^s$ such that $\boldsymbol{\gamma}_b^s(\BFx,\BFy)=\BFi$ and $\gamma_b(\BFx,\BFy)=|\BFi\|$, the joint pdf $\psi(\BFx,\BFy)$ of two distinct points $(\BFU_I,\BFU_J)$ randomly chosen from ${}_b\tilde P_n$ is given by
	\[
	\psi(\BFx,\BFy)=\begin{cases}
	\frac{N_b(\BFi;P_n)}{n(n-1)}\frac{b^{s+{|\BFi\,|}}}{(b-1)^s}	&\text{if } {|\BFi\,|} < \infty,\\
	0	&\text{if } {|\BFi\,|}=\infty.
	\end{cases}
	\]
	
	In the special case where ${P}_n$ is a digital net in base $b$ with $n=b^m$, \hchristiane{removed $(t,m,s)$-net because Ref 2 was asking ``where is $t$?'' in the formula for the pdf} \hjaspar{this formula actually only holds for digitally constructed scrambled net. It follows because elementary intervals of the same type will have the same number of points. Thus we only need count the points in one elementary interval and we dont need the first $n$ in the denominator.}\hchristiane{Now says $P_n$ is a digital net}
	the joint pdf becomes
	\[
	\psi(\BFx,\BFy)=\begin{cases}
	\frac{n_b(\BFi;P_n)}{(b^m-1)}\frac{b^{s+{|\BFi\,|}}}{(b-1)^s}	&\text{if } {|\BFi\,|} < \infty,\\
	0	&\text{if } {|\BFi\,|}=\infty.
	\end{cases}
	\]	
\end{thm}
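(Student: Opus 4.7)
The plan is to derive $\psi$ by averaging the pairwise joint pdfs of $(\BFU_i,\BFU_j)$ over the random selection of a pair of indices. By the definition of $H(\BFx,\BFy;\tilde{P}_n)$ in \eqref{eq:puvdims}, the random pair $(\BFU_I,\BFU_J)$ has joint pdf given by
\[
\psi(\BFx,\BFy) = \frac{2}{n(n-1)} \sum_{1 \le j < l \le n} \psi_{j,l}(\BFx,\BFy),
\]
where $\psi_{j,l}$ denotes the joint pdf of $(\BFU_j,\BFU_l)$. Remark \ref{rem:ShortScrProp} is the key ingredient: for each pair of distinct indices $j,l$, the scrambled pair satisfies $(\BFU_j,\BFU_l) \sim U(D_{\BFi}^s)$, where $\BFi = \boldsymbol{\gamma}^s_b(\BFV_j,\BFV_l)$. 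Hence
\[
\psi_{j,l}(\BFx,\BFy) = \frac{1}{{\rm Vol}(D_{\BFi}^s)}{1}_{D_{\BFi}^s}(\BFx,\BFy) = \frac{b^{s+|\BFi\,|}}{(b-1)^s}{1}_{D_{\BFi}^s}(\BFx,\BFy)
\]
whenever $|\BFi\,|<\infty$, using the formula ${\rm Vol}(D_{\BFi}^s) = (b-1)^s/b^{s+|\BFi\,|}$ computed just before Remark \ref{rem:ShortScrProp}.

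Next, I would fix $(\BFx,\BFy) \in D_{\BFi}^s$ for some finite $\BFi$ and observe that only those pairs $(j,l)$ with $\boldsymbol{\gamma}^s_b(\BFV_j,\BFV_l) = \BFi$ contribute to the sum above. The number of such unordered pairs is exactly $N_b(\BFi;P_n)/2$, by Definition \ref{def:mknipn} (and item 6 of Remark \ref{rem:MbNb}, which guarantees that $N_b(\BFi;P_n) = N_b(\BFi;{}_b\tilde{P}_n)$, so the counting can equivalently be done on either point set). Combining these facts yields
\[
\psi(\BFx,\BFy) = \frac{2}{n(n-1)} \cdot \frac{N_b(\BFi;P_n)}{2} \cdot \frac{b^{s+|\BFi\,|}}{(b-1)^s} = \frac{N_b(\BFi;P_n)}{n(n-1)}\frac{b^{s+|\BFi\,|}}{(b-1)^s},
\]
which is the desired first formula. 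For the case $|\BFi\,|=\infty$, note that the assumption that $P_n$ has all distinct $j$th coordinates (Remark \ref{rem:NoRepeatCoord}) forces $\gamma_b(\BFV_j,\BFV_l) < \infty$ for every pair of distinct points, so no unordered pair $(j,l)$ has $\boldsymbol{\gamma}^s_b(\BFV_j,\BFV_l)$ equal to such an $\BFi$, and the sum is empty, giving $\psi(\BFx,\BFy)=0$.

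Finally, for the digital net specialization, I would invoke Lemma \ref{lem3p2}(iii), which gives $N_b(\BFi;P_n) = b^m\, n_b(\BFi;P_n)$. Substituting this and using $n = b^m$ in the general formula collapses $n(n-1) = b^m(b^m-1)$, leaving $b^m-1$ in the denominator and producing the stated expression. The main conceptual hurdle, though modest, is keeping the bookkeeping between ordered and unordered pairs straight (the factor of $2$), and verifying that the counting numbers defined on the deterministic $P_n$ are legitimately equal to those on ${}_b\tilde P_n$ so that Remark \ref{rem:ShortScrProp} and Lemma \ref{lem3p2} can be composed seamlessly.
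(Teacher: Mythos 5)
Your proof is correct and follows essentially the same route as the paper's: both rest on Remark \ref{rem:ShortScrProp} (each scrambled pair is uniform on its cell $D_{\BFi}^s$), the count $N_b(\BFi;P_n)$ of ordered pairs, and the volume formula for $D_{\BFi}^s$, together with Lemma \ref{lem3p2}(iii) for the digital-net specialization. The only cosmetic difference is that you evaluate $\psi$ pointwise as a weighted sum of uniform densities (keeping the ordered/unordered factor of $2$ straight, which you do correctly), whereas the paper first notes $\psi$ is constant on $D_{\BFi}^s$ and solves for that constant by equating its integral over $D_{\BFi}^s$ to the probability $N_b(\BFi;{}_b\tilde P_n)/(n(n-1))$.
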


\begin{proof}
As explained in Remark \ref{rem:ShortScrProp}, from the properties of a base $b-$digital scramble, the joint pdf 
$\psi(\BFx,\BFy)$  is constant on $D_{\BFi}^s$. The value $\psi_{\BFi}$ of $\psi(\BFx,\BFy)$ on $D_{\BFi}^s$ (${|\BFi\,|} \neq \infty$) can be found by observing that the integral of $\psi(\BFx,\BFy)$ over $D_{\BFi}^s$ is equal to the probability that a random pair of distinct points from ${}_b\tilde{P}_n$ lie in 
 $D_{\BFi}^s$, i.e., 
\begin{equation}
\label{eq:SolvePDF}
\frac{N_b({\BFi};{}_b\tilde{P}_n)}{n(n-1)} = \int_{D_{\BFi}^s} \psi(\BFx,\BFy) d\BFx d\BFy .
\end{equation}
Since the right-hand side is also equal to $\psi_{\BFi}\vol(D_{\BFi}^s) 
= \psi_{\BFi} \frac{(b-1)^s}{b^{s+{|\BFi\,|}}}$, we then simply solve for $ \psi_{\BFi}$ using \eqref{eq:SolvePDF}. 

For a scrambled $(t,m,s)-$net in base $b$, from Lemma \ref{lem3p2} we get $N_b(\BFi;{}_b\tilde{P}_n) = b^m n_b(\BFi;{}_b\tilde{P}_n)$.

When ${|\BFi\,|}=\infty$, our assumption that the one-dimensional projections of $P_n$ have $n$ distinct points implies 
there cannot be two distinct points in $D_{\BFi}$ so the joint pdf must be 0 in this case.

Finally, based on Remark \ref{rem:MbNb}(v) we observe that $N_b({\BFi};{}_b\tilde{P}_n) = N_b({\BFi}; P_n)$ and $n_b(\BFi; {}_b\tilde{P}_n)=n_b(\BFi;P_n)$.

\hchristiane{Point out that $N_b(\infty;P_n)=1$ but ok because formula for pdf with $N_b(\BFi)$ only works for finite $i$ (since only works when $N_b()$ ends up counting distinct points? Might be worth pointing this out}
\end{proof}

\begin{remark}\label{rem:EndSec3}
A few observations are in order:
\begin{enumerate}
\item \hchristiane{Changed $N_b({\BFi};{}_b\tilde{P}_n)$ for $N_b({\BFi};{P}_n)$ etc} The joint pdf is a simple function because the assumption discussed in Remark \ref{rem:NoRepeatCoord} implies that if ${|\BFi\,|}$ is large enough, then $N_b({\BFi};P_n)$ becomes 0 and thus  $\psi(\BFx,\BFy)=0$ for $\BFx,\BFy$ with  $\gamma_b^s(\BFx,\BFy)=\BFi$.
\hchristiane{I think this remark was introduced when our result was restricted to scrambled $(0,m,s)$-net: fix}
\item We can see from Lemma \ref{lem3p2} and Theorem \ref{FormOfJointPDF} that the joint pdf of a scrambled $(0,m,s)$-net depends only on  the sum $\sum_{j=1}^s \gamma_b(x_j,y_j)$ and not on the vector $\boldsymbol{\gamma}_b^s(\BFx,\BFy)$, since in that case $n_b(\BFi;P_n)$ depends only on ${|\BFi\,|}$.
\item Using Lemma \ref{FormulaForVi} and its preceding discussion along with the formulas in this section, one can compute $H(\BFx,\BFy;{}_b\tilde{P}_n)$ exactly.
\item The expression for the joint pdf given in Theorem \ref{FormOfJointPDF} holds for any base$-b$ digital scramble. From this observation we obtain the following corollary, which can be inferred from a discussion in  \cite{vHON01a} (immediately before the statement of their Theorem 2.1), but does not seem to appear explicitly as a result anywhere in the literature.
\end{enumerate}
\end{remark}

\begin{cor}
Let $P_n$ be a digital net in base $b$  such that each one-dimensional projection is a $(0,m,1)-$net in base $b$. 
Let ${}_b\tilde{P}_{n,1}$ be the point set obtained after applying a nested uniform scramble in base $b$ to $P_n$ and let ${}_b\tilde{P}_{n,2}$ be the point set obtained after applying an affine matrix scramble  to $P_n$. Let $\hat{\mu}_{n,i}$ be the estimator for $\mu(f)$ corresponding to ${}_b\tilde{P}_{n,i}$, $i=1,2$. Then ${\rm Var}(\hat{\mu}_{n,1}) = {\rm Var}(\hat{\mu}_{n,2})$.
\end{cor}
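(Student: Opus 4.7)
The plan is to reduce the claim to the observation that the joint pdf $\psi(\BFx,\BFy)$ of a pair of distinct scrambled points depends, via Theorem \ref{FormOfJointPDF}, only on the deterministic point set $P_n$ and the base $b$, and not on which base$-b$ digital scramble is used.

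First, recall from Section \ref{sec:backNets} that both nested uniform scrambling and affine matrix scrambling satisfy Properties 1 and 2 defining a base$-b$ digital scramble. Since each one-dimensional projection of $P_n$ is a $(0,m,1)$-net in base $b$ with $n=b^m$, every elementary one-dimensional interval of length $b^{-m}$ contains exactly one point, so the $j$th coordinates of the $\BFV_i$'s are pairwise distinct for every $j$. Consequently the hypothesis of Remark \ref{rem:NoRepeatCoord} and of Theorem \ref{FormOfJointPDF} is met by both ${}_b\tilde{P}_{n,1}$ and ${}_b\tilde{P}_{n,2}$. Applying Theorem \ref{FormOfJointPDF} to each of them gives an explicit expression for the joint pdf $\psi_i(\BFx,\BFy)$ of ${}_b\tilde{P}_{n,i}$ in terms of $N_b(\BFi;P_n)$ (using Remark \ref{rem:MbNb}(6), which asserts that these counting numbers are preserved under any base$-b$ digital scramble of $P_n$). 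Since the right-hand side of the formula depends only on $P_n$, $b$, and $\BFi = \boldsymbol{\gamma}^s_b(\BFx,\BFy)$, we conclude $\psi_1(\BFx,\BFy)=\psi_2(\BFx,\BFy)$ almost everywhere.

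Next I would combine this with the covariance representation \eqref{eq:covaspdf}: for each $i=1,2$,
\[
\sigma_{I,J}^{(i)} = \int_{[0,1]^{2s}} f(\BFx)f(\BFy)\psi_i(\BFx,\BFy)\,d\BFx\,d\BFy - \left(\int_{[0,1]^{s}} f(\BFx)\,d\BFx\right)^2,
\]
so the equality of pdfs yields $\sigma_{I,J}^{(1)} = \sigma_{I,J}^{(2)}$. Moreover, Property 1 of a base$-b$ digital scramble says each individual scrambled point is uniformly distributed on $[0,1)^s$, so the marginal variance $\sigma^2 = {\rm Var}(f(\BFU))$ is the same under both schemes. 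Plugging both facts into the variance decomposition
\[
{\rm Var}(\hat{\mu}_{n,i}) = \frac{\sigma^2}{n} + \frac{n-1}{n}\sigma_{I,J}^{(i)}
\]
immediately gives ${\rm Var}(\hat{\mu}_{n,1}) = {\rm Var}(\hat{\mu}_{n,2})$.

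There is no real obstacle here once Theorem \ref{FormOfJointPDF} and Remark \ref{rem:MbNb}(6) are in hand; the only subtle point is ensuring that the joint pdf really is characterised by data intrinsic to $P_n$, which is precisely what Remark \ref{rem:MbNb}(6) guarantees by observing that $\boldsymbol{\gamma}^s_b(\BFV_j,\BFV_l) = \boldsymbol{\gamma}^s_b(\BFU_j,\BFU_l)$ under any base$-b$ digital scramble. The $(0,m,1)$-net hypothesis on the projections is needed only to ensure the pdf formula of Theorem \ref{FormOfJointPDF} applies to both randomizations.
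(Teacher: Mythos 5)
Your proof is correct and follows essentially the same route as the paper: both scrambles are base-$b$ digital scrambles, so by Theorem \ref{FormOfJointPDF} the joint pdf of a random pair of distinct points is determined entirely by $P_n$ and $b$ (via the $N_b(\BFi;P_n)$), and equality of variances then follows from the covariance representation \eqref{eq:covaspdf}. You simply spell out details the paper leaves implicit, such as verifying the distinct-coordinates hypothesis from the $(0,m,1)$-net projections and noting that the marginal variance $\sigma^2$ agrees by Property 1.
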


\begin{proof}
Since both types of scrambling satisfy the properties of a base$-b$ digital scramble (see \cite{Hic96a,vHON01a} and \cite{vOWE03a}), the joint pdf of two distinct points $(\BFU_I,\BFU_J)$ randomly chosen from ${}_b\tilde{P}_{n,1}$ is the same as that for two distinct points $(\BFU_I,\BFU_J)$ randomly chosen from ${}_b\tilde{P}_{n,2}$. Hence using \eqref{eq:covaspdf}, we get that the estimators $\hat{\mu}_{n,1}$ and $\hat{\mu}_{n,2}$ corresponding to 
${}_b\tilde{P}_{n,1}$ and ${}_b\tilde{P}_{n,2}$ have the same variance.
\end{proof}

\section{Dependence structure of scrambled point sets}
\label{sec:nlod}

By the end of this section, we will have shown that the only scrambled digital $(t,m,s)$-nets that are NLOD/NUOD are those for which $t=0$. 
We will arrive to this result by using the properties of the joint pdf $\psi(\BFu,\BFv)$ for scrambled nets ${}_b\tilde{P}_n$, which was studied in the previous section. 
More precisely, we will develop an inequality of the form
\hchristiane{I don't know why we called this $G$ since we already used the notation $H$ for the same quantity in the introduction, so I'm changing this.}
\begin{equation}
\label{eq:GxywithCb}
H(\BFx,\BFy; {}_b\tilde{P}_n)=
\int_{R(\BFx,\BFy)}\psi(\BFu,\BFv) d\BFu d\BFv =
 \sum_{\BFk \in \mathbb{N}^s} t_{\BFk} C_b(\BFk;P_n) 
\leq 
C_b \vol(R(\BFx,\BFy))
\end{equation}
that holds for all $\BFx,\BFy\in[0,1]^s$, and in which the $t_{\BFk}$'s are non-negative coefficients determined by $\BFx,\BFy$ and such that 
$\sum_{\BFk \in \mathbb{N}^s} t_{\BFk} = \vol(R(\BFx,\BFy))$, while the $C_b(\BFk;P_n)$ values are determined by $P_n$, and play a key role in the analysis of the dependence structure of ${}_b\tilde{P}_n$, as mentioned in the introduction. We will also  provide an exact expression for the minimum constant $C_b$ satisfying this inequality, which turns out to be the maximum value of the $C_b(\BFk;P_n)$ values. Hence one can view the summation in \eqref{eq:GxywithCb} as a decomposition of $H(\BFx,\BFy; {}_b\tilde{P}_n)$ into a sum of products of two terms, with one term--$t_{\BFk}$--solely depending on the region $R(\BFx,\BFy)$ being considered, and the other--$C_b(\BFk;P_n)$--measuring the quality of $P_n$.  This is reminiscent of other fundamental results for quasi-Monte Carlo integration, where an error bound is given as a product of the form $V(f)D(P_n)$, where $V(f)$ measures the variation of $f$ while $D(P_n)$ measures the discrepancy of $P_n$ (see, e.g., \cite{DiPi10,rNIE92b}). 
 
\subsection{The $C_b(\BFk;P_n)$ values }

\begin{defi}
\label{def:Cbk}
Let $P_n$ be a set of $n$ points in $[0,1)^s$ and $b \ge 2$ be an integer. Let $C_b(\BFk;P_n)$ be defined as
\[
C_b(\BFk;P_n) = \frac{b^{|\BFk|} M_b(\BFk;P_n)}{n(n-1)}.
\]
\end{defi}

It is easy to see that $C_b(\BFk;P_n) = 1$ when $\BFk=\mathbf{0}$.  The following result also holds:
 
\begin{lemma}
If $P_n$ is a digital $(t,m,s)$-net in base $b$ whose one-dimensional projections are $(0,m,1)$-nets, then
\[
C_b(\BFk;P_n) = \frac{b^{|\BFk|}(b^{m-r(\BFk)} -1)}{b^m-1}.
\]
If $P_n$ is a $(0,m,s)$-net in base $b$, then
\[
C_b(\BFk;P_n)= \frac{b^{|\BFk|}(\max(b^{m-{|\BFk|}}-1,0))}{b^m-1}.
\]
\end{lemma}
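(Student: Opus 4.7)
The plan is to compute $C_b(\BFk;P_n)$ directly from its definition by leveraging Lemma \ref{lem3p2} together with items 5 and 6 of Remark \ref{rem:MbNb}.

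First, unpacking the definition gives $C_b(\BFk;P_n) = \frac{b^{|\BFk|} M_b(\BFk;P_n)}{n(n-1)}$, and item 5 of Remark \ref{rem:MbNb} rewrites $M_b(\BFk;P_n) = \sum_{l=1}^n m_b(\BFk; P_n, \BFU_l)$. I would then invoke item 6 of Remark \ref{rem:MbNb} to note that these counting quantities are invariant under a base $b-$digital scramble, namely $m_b(\BFk; P_n, \BFU_l) = m_b(\BFk; {}_b\tilde{P}_n, \BFU_l)$ for each $\ell$, which transfers the computation to the scrambled setting where Lemma \ref{lem3p2} applies.

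For the first part, with $P_n$ a digital $(t,m,s)$-net in base $b$ whose one-dimensional projections are $(0,m,1)$-nets, Lemma \ref{lem3p2}(i) yields $m_b(\BFk; {}_b\tilde{P}_n) = b^{m-r(\BFk)}-1$, a value that is independent of $\ell$. Hence $M_b(\BFk;P_n) = n(b^{m-r(\BFk)}-1) = b^m(b^{m-r(\BFk)}-1)$, and substituting into the definition of $C_b(\BFk;P_n)$ together with $n-1 = b^m-1$ produces the first formula after cancellation of $n=b^m$. For the second part, an essentially identical calculation using Lemma \ref{lem3p2}(iv) in place of (i) gives $m_b(\BFk; {}_b\tilde{P}_n) = \max(b^{m-|\BFk|}-1,0)$ and hence the second formula.

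The argument is essentially a short chain of substitutions, with the real work already carried out in Lemma \ref{lem3p2}. The only mild subtlety I would flag is confirming that the assumption that the one-dimensional projections are $(0,m,1)$-nets is precisely what ensures the framework of Remark \ref{rem:NoRepeatCoord} applies (no pair of distinct points shares any coordinate), so that the counting numbers $m_b(\BFk;P_n,\BFU_l)$ are all finite and take the same value across $\ell$, allowing Remark \ref{rem:MbNb}.6 and Lemma \ref{lem3p2}(i) to be used as stated. Hence I do not anticipate a significant obstacle; the result is essentially a corollary of Lemma \ref{lem3p2}.
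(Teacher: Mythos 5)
Your proof is correct and follows essentially the same route as the paper's: both reduce $C_b(\BFk;P_n)$ to $M_b(\BFk;P_n)=b^m m_b(\BFk;P_n)$ and then substitute the values of $m_b$ from Lemma \ref{lem3p2}(i) and (iv). The extra detail you give—explicitly routing through Remark \ref{rem:MbNb}(5)--(6) to transfer the counts between $P_n$ and ${}_b\tilde{P}_n$, and noting the role of the $(0,m,1)$-projection hypothesis—is consistent with, and slightly more careful than, the paper's terser argument.
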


\begin{proof}
From the definition of $M_b(\BFk;P_n)$, for $(t,m,s)-$nets in base $b$, it holds that $M_b(\BFk;P_n)= b^m m_b (\BFk;P_n)$, and from Lemma \ref{lem3p2}, for digital nets $m_b(\BFk;\tilde{P}_n) = b^{m-r(\BFk)}-1$ holds, from which we obtain the desired formula.
For a $(0,m,s)$-net in base $b$, we have $M_b(\BFk;P_n) = b^{m}(\max(b^{m-{|\BFk|}},1)-1)$, which after simplification yields $C_b(\BFk;P_n) = b^{|\BFk|} (\max(b^{m-{|\BFk|}},1)-1)/(b^m-1)$ for $k \le m$.
\end{proof}

As mentioned above, the values $C_b(\BFk;P_n)$ play a key role in our analysis of the joint pdf of scrambled point sets. They also have a connection with the concept of $\BFk-$equidistribution, as shown in the following lemma.

\hchristiane{Made this into a lemma because the proof, although simple, seemed a bit too long to just be mentioned in the text.}
\hjaspar{Looks good}
\hchristiane{Ref 1 is not convinced of "only if" part: we should either clarify that this iff is for digital nets, in which case $M_b(\BFk;P_n)=n(b^{m-k}-1)$ iff $r(\BFk)=k$, which only happens if $P_n$ is $\BFk$-equidistributed. If we want this to be more general, i.e., for any $(t,m,s)$-net, then we use the fact that if $P_n$ is not $\BFk$-equidistributed, this means some $\BFk$-intervals have more or less than $b^{m-k}$ points, and therefore (obvious?) the number of pairs cannot be $n(b^{m-k}-1)$.}
\begin{lemma} 
\label{lem:formCkiffequid}
Let $\BFk \in \mathbb{N}^s$ be such that $|\BFk| \le m$. A set $P_n \subseteq [0,1)^s$ with $n=b^m$ points is $\BFk-$equidistributed if and only if $C_b(\BFk;P_n)= b^{|\BFk|}(b^{m-|\BFk|} -1)/(b^m-1)$.
\end{lemma}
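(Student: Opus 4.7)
The plan is to unfold the definition of $C_b(\BFk;P_n)$ so that the stated equality becomes a statement about $M_b(\BFk;P_n)$, and then recognize that this is really a convexity/equality case statement about how the $n=b^m$ points are distributed among the $b^{|\BFk|}$ elementary $\BFk$-intervals that partition $[0,1)^s$.

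First, I would rewrite the condition $C_b(\BFk;P_n) = b^{|\BFk|}(b^{m-|\BFk|}-1)/(b^m-1)$ as
\[
M_b(\BFk;P_n) \;=\; \frac{n(n-1)(b^{m-|\BFk|}-1)}{b^{|\BFk|}(b^m-1)} \cdot b^{|\BFk|} \;=\; b^m\bigl(b^{m-|\BFk|}-1\bigr),
\]
using $n=b^m$. Then, for each elementary $\BFk$-interval $I_{\BFk}(\BFa)$ with $0 \le a_\ell < b^{k_\ell}$, let $n_{\BFa}$ denote the number of points of $P_n$ that lie in $I_{\BFk}(\BFa)$. By Remark~\ref{rem:MbNb}, $M_b(\BFk;P_n)$ counts ordered pairs of distinct points of $P_n$ that land in the same elementary $\BFk$-interval, so
\[
M_b(\BFk;P_n) \;=\; \sum_{\BFa} n_{\BFa}(n_{\BFa}-1) \;=\; \Bigl(\sum_{\BFa} n_{\BFa}^2\Bigr) - b^m,
\]
where $\sum_{\BFa} n_{\BFa} = b^m$.

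For the forward direction, if $P_n$ is $\BFk$-equidistributed, then $n_{\BFa} = b^{m-|\BFk|}$ for every $\BFa$, and since there are $b^{|\BFk|}$ such intervals, one computes directly $M_b(\BFk;P_n) = b^{|\BFk|}\cdot b^{m-|\BFk|}(b^{m-|\BFk|}-1) = b^m(b^{m-|\BFk|}-1)$, which is the required identity. The converse is the main substantive step: assuming the identity, we have $\sum_{\BFa} n_{\BFa}^2 = b^{2m-|\BFk|}$. By the Cauchy--Schwarz inequality (or equivalently convexity of $x\mapsto x^2$),
\[
\sum_{\BFa} n_{\BFa}^2 \;\ge\; \frac{\bigl(\sum_{\BFa} n_{\BFa}\bigr)^2}{b^{|\BFk|}} \;=\; \frac{b^{2m}}{b^{|\BFk|}} \;=\; b^{2m-|\BFk|},
\]
with equality iff all $n_{\BFa}$ are equal. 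Combined with $\sum_{\BFa} n_{\BFa} = b^m$, equality forces $n_{\BFa} = b^{m-|\BFk|}$ for every $\BFa$, which is exactly $\BFk$-equidistribution.

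I do not anticipate a real obstacle here: once $M_b(\BFk;P_n)$ is rewritten as $\sum_{\BFa} n_{\BFa}^2 - b^m$, the proof is a one-line application of the equality case of Cauchy--Schwarz. The only thing to be careful about is making sure the algebraic manipulation of the constants is clean (in particular, the $(b^m-1)$ denominators cancel correctly), and explicitly noting that the $b^{|\BFk|}$ elementary intervals form a partition of $[0,1)^s$ so that $\sum_{\BFa}n_{\BFa}=n$.
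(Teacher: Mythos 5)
Your proof is correct and takes essentially the same approach as the paper: the forward direction is the identical direct count, and your Cauchy--Schwarz equality-case argument for the converse is just a repackaging of the paper's computation $M_b(\BFk;P_n)=n(b^{m-|\BFk|}-1)+\sum_{i} x_i^2$ with $x_i = n_{\BFa}-b^{m-|\BFk|}$, which exceeds the baseline unless every deviation vanishes.
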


\begin{proof}
If $P_n$ is $\BFk-$equidistributed, then each elementary interval of the form $I_{\BFk}(\BFa)$ given in \eqref{eq:elemInt} has $b^{m-|\BFk|}$ points, and thus $M_b(\BFk;P_n)=b^{|\BFk|}(b^{m-|\BFk|}(b^{m-|\BFk|}-1)) = b^m (b^{m-|\BFk|}-1)$.
If $P_n$ is not $\BFk-$equidistributed, then it means some elementary intervals have more than $b^{m-|\BFk|}$ points and some have less. We will show this implies $M_b(\BFk;P_n)\neq n (b^{m-|\BFk|}-1)$. To do so, we let $x_i$ be such that the $i$th elementary interval has $b^{m-|\BFk|}+x_i$ points, for $i=1,\ldots,N$, where $N=b^{|\BFk|}$. Therefore $\sum_{i=1}^{N}x_i=0$ and some $x_i$'s are not 0. We also note that the number of distinct ordered pairs in an elementary interval with $b^{m-|\BFk|}+x_i$ points is given by
\[
(b^{m-|\BFk|}+x_i)(b^{m-|\BFk|}+x_i-1) = b^{m-|\BFk|}(b^{m-|\BFk|}-1) + 2x_i b^{m-|\BFk|} + x_i(x_i-1).
\]
Therefore
\begin{align*}
M_b(\BFk;P_n) &= n(b^{m-|\BFk|}-1)+2b^{m-|\BFk|}\sum_{i=1}^{N} x_i
+ \sum_{i=1}^{N} x_i (x_i-1) \\
&=n(b^{m-|\BFk|}-1)+ \sum_{i=1}^{N} x_i^2
\neq n(b^{m-|\BFk|}-1)
\end{align*}
because we have assumed not all $x_i$'s are equal to 0.
\end{proof}

We also note that the value $C_b(\BFk;P_n)$ can be computed for any point set $P_n$ and base $b \ge 2$, and leads us to the introduction of the following new concept.

\begin{defi}
\label{DefCqe}
Let $P_n$ be a point set of size $n$ in $[0,1)^s$ and $b \ge 2$ be a base. Let $\BFk = (k_1,\ldots,k_s) \in \N^s$. Then we say $P_n$ is {\em $\BFk-$quasi-equidistributed in base $b$} if $C_b(\BFk;P_n) \le 1$. If $C_b(\BFk;P_n) \le 1$ for all $\BFk \in \N^s$ then we say $P_n$ is {\em completely quasi-equidistributed (c.q.e) in base $b$}.
\end{defi}

Note that there are only finitely many values of $\BFk \in N^s$ for which we need to compute $C_b(\BFk;P_n)$ in order to verify if $P_n$ is c.q.e. Indeed, the condition that the $j$th coordinate of the points are all distinct for $j=1,\ldots,s$ 
ensures there exists an $M$ such that any $\BFk-$interval with $k>M$ has at most one point in it, which means $M_b(\BFk;P_n)=C_b(\BFk;P_n)=0$ for these $\BFk$'s. 

It is clear that a $(0,m,s)-$net in base $b$ is c.q.e (in base $b$), since by definition it is $\BFk-$equidistributed  for all $\BFk$ such that $|\BFk| \le m$. Other examples of point sets $P_n$ that are c.q.e.\ are given in the next proposition.

\begin{prop}
\label{prop:0scqe}
The first $n$ points of a $(0,s)-$sequence in base $b$ is a c.q.e.\ point set in base $b$.
\end{prop}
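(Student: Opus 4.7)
Fix $\BFk \in \N^s$ and write $k = |\BFk|$. The goal is to establish $b^k M_b(\BFk;P_n) \le n(n-1)$, which is equivalent to $C_b(\BFk;P_n) \le 1$. The main structural ingredient I will exploit is the defining property of a $(0,s)$-sequence: for every integer $\ell \ge 0$, the block $\{\BFV_{\ell b^k},\ldots,\BFV_{(\ell+1)b^k-1}\}$ is a $(0,k,s)$-net in base $b$, and hence contains exactly one point in each elementary $\BFk$-interval.

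The plan is to slice $P_n$ at the natural level $b^k$. Write $n = q b^k + r$ with $0 \le r < b^k$. I will decompose $P_n$ into $q$ consecutive complete blocks of length $b^k$, each starting at a multiple of $b^k$, together with the first $r$ points of the next block. By the equidistribution property above, each of the $q$ complete blocks contributes exactly one point to every elementary $\BFk$-interval, so after the first $qb^k$ points each such interval contains exactly $q$ points. The $r$ leftover points form the first $r$ elements of a single $(0,k,s)$-net, and therefore lie in $r$ distinct elementary $\BFk$-intervals. The conclusion is that exactly $r$ of the $b^k$ elementary $\BFk$-intervals contain $q+1$ points, while the remaining $b^k - r$ contain exactly $q$ points.

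With this explicit distribution in hand, a direct evaluation of $M_b(\BFk;P_n)=\sum_I n_I(n_I-1)$ reduces to $q\bigl[b^k(q-1)+2r\bigr]$, and the inequality $b^k M_b(\BFk;P_n) \le n(n-1) = (qb^k+r)(qb^k+r-1)$ collapses, after a short expansion, to
\[
n(n-1) - b^k M_b(\BFk;P_n) = qb^k(b^k-1) + r(r-1) \ge 0,
\]
which is manifest since $b \ge 2$ and $q,r \ge 0$. Since this works for every $\BFk \in \N^s$, it shows $P_n$ is c.q.e.\ in base $b$.

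I do not anticipate a serious obstacle. The one point worth flagging is the claim that the $r$ leftover points land in $r$ distinct elementary $\BFk$-intervals; this is where the full strength of the $(0,s)$-sequence hypothesis is used, since it forces the $(q{+}1)$-th block to be a $(0,k,s)$-net in which every $\BFk$-interval contains a unique point. Everything else amounts to combining the elementary identity $M_b(\BFk;P_n)=\sum_I n_I(n_I-1)$ with the two-valued distribution of the $n_I$'s.
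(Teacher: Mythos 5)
Your proof is correct and follows essentially the same route as the paper's: split the first $n=qb^{k}+r$ points into $q$ complete blocks that are each $(0,k,s)$-nets (so every elementary $\BFk$-interval gets exactly $q$ points from them) plus $r$ leftover points landing in $r$ distinct intervals, then count pairs. The only cosmetic difference is that you verify the inequality by exhibiting the manifestly nonnegative difference $n(n-1)-b^{k}M_b(\BFk;P_n)=qb^{k}(b^{k}-1)+r(r-1)$, which also handles the case $b^{k}>n$ (i.e.\ $q=0$) uniformly, whereas the paper bounds the two factors separately and treats that case on its own.
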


\begin{proof}
\hchristiane{Just for inside this proof I explicitly wrote that we use the notation $k = |\BFk|$. But can omit for consistency.}
Let $\BFk \in \N^s$. We first assume $|\BFk| \le \log (n) / \log(b)$ and write $n=jb^{|\BFk|}+r$ with $j = \lfloor nb^{-|\BFk|} \rfloor$ and $r=n-jb^{|\BFk|}$, so  $0 \le r < b^{|\BFk|}$.
From the properties of a $(t,s)-$ sequence \cite{DiPi10,rNIE92b}, each point set of the form $P_{\ell,v} = \{\BFV_{(\ell-1)b^v+1},\ldots,\BFV _{\ell b^v}\}$ for $\ell \ge 1$ and $v\ge 1$ is a $(t,v,s)-$net in base $b$. Hence,  we can split the first $n$ points of a $(0,s)$-sequence into $j$ $(0,|\BFk|,s)-$nets  and  an additional point set with $r$ points. Each of the $j$ $(0,|\BFk|,s)-$nets contributes exactly one point to each $\BFk-$elementary interval. The last $r$ points occupy exactly $r$ of the $b^{|\BFk|}$ $\BFk-$elementary intervals, as otherwise $P_{j+1,|\BFk|}$ would not be a $(0,|\BFk|,s)$-net. Therefore $r$ $\BFk-$elementary intervals have $j+1$ points and $b^{|\BFk|}-r$ have $j$ points. Hence
\[
M_b(\BFk;P_n) = rj(j+1)+(b^{|\BFk|}-r)j(j-1)
\]
and therefore
\[
C_b(\BFk;P_n) = \frac{b^{|\BFk|}}{n(n-1)} \left(rj(j+1)+(b^{|\BFk|}-r)j(j-1)\right) =  \frac{b^k}{n(n-1)}  \left(j(n-b^{|\BFk|}+r)\right).
\]
Since $jb^{|\BFk|} \le n$ and $n-b^{|\BFk|}+r \le n-1$, we obtain that $C_b(\BFk;P_n)  \le 1$. 

If $\BFk \in \N^s$ is such that 
$|\BFk| > \log (n) / \log(b)$, then $n$ of the $b^{|\BFk|}$ $\BFk-$elementary intervals have one point and $b^{|\BFk|}-n$ have 0 points. Hence 
$M_b(\BFk;P_n)$ and therefore $C_b(\BFk;P_n)$ are both 0 in that case.
\end{proof}

\begin{remark}
The proof of Proposition \ref{prop:0scqe} relies on the fact that when $P_n$ is given by the first $n$ points of a $(0,s)-$sequence in base $b$, then for any $\BFk \in \N^s$, the number of points in two different $\BFk-$elementary intervals in base $b$ differ by at most one.
\end{remark}

\hchristiane{This is new as of October 22, not discussed when skyping.}
We can also show that the $C_b(\BFk;P_n)$ values contain more information than the parameter $t$ of a digital $(t,m,s)-$net, as  demonstrated in the following proposition.

\begin{prop}
\label{prop:Ckt}
Let $P_n$ be a digital $(t,m,s)-$net in base $b$ such that the $j$th coordinates of the net form a $(0,m,1)-$net in base $b$ for each $j=1,\ldots,s$. Then 
\hchristiane{Rewritten as per Ref 1: better proposal?}
\[
t = m - 
\max \{ \ell: \ell \le m \wedge \forall \BFk \in \mathbb{N}^s: |\BFk| = \ell \Rightarrow   C_b(\BFk;P_n) \le 1 \}.
\]
\end{prop}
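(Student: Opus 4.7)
The plan is to use the explicit formula $C_b(\BFk;P_n) = b^{|\BFk|}(b^{m-r(\BFk)}-1)/(b^m-1)$ (valid under our assumptions on $P_n$) and carefully analyze when the condition $C_b(\BFk;P_n) \le 1$ is equivalent to $\BFk$-equidistribution.

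First I would establish the following key equivalence: for any $\BFk \in \mathbb{N}^s$ with $|\BFk| \le m$, we have $C_b(\BFk;P_n) \le 1$ if and only if $r(\BFk) = |\BFk|$ (i.e.\ $P_n$ is $\BFk$-equidistributed, by Remark \ref{rem:MbNb}(2)). The ``if'' direction is an easy computation: $C_b(\BFk;P_n) = (b^m - b^{|\BFk|})/(b^m - 1) \le 1$. For the ``only if'' direction, suppose $r(\BFk) \le |\BFk| - 1$; then $b^{m - r(\BFk)} - 1 \ge b^{m - |\BFk| + 1} - 1$, so
\[
C_b(\BFk;P_n) \ge \frac{b^{m+1} - b^{|\BFk|}}{b^m - 1}.
\]
The numerator exceeds $b^m - 1$ precisely when $b^m(b-1) > b^{|\BFk|} - 1$, which is immediate from $|\BFk| \le m$ and $b \ge 2$. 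Hence $C_b(\BFk;P_n) > 1$, proving the contrapositive.

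Next I would translate this into a statement about the max. Set $L = \max\{\ell \le m : \forall \BFk \text{ with } |\BFk|=\ell,\ C_b(\BFk;P_n)\le 1\}$. Since $P_n$ is a $(t,m,s)$-net, $P_n$ is $\BFk$-equidistributed for every $\BFk$ with $|\BFk| \le m-t$, so by the equivalence above $\ell = m-t$ belongs to the set, giving $L \ge m - t$. To show $L \le m-t$, I would use the fact that $t$ is minimal, so $P_n$ is not a $(t-1,m,s)$-net (when $t \ge 1$); this furnishes a ``witness'' vector $\BFk^\ast$ with $|\BFk^\ast| = m-t+1$ and $r(\BFk^\ast) \le m - t$.

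The main step---which I expect to be the only mildly subtle point---is propagating the failure of equidistribution at $\BFk^\ast$ to every intermediate level $\ell$ with $m-t+1 \le \ell \le m$. For any such $\ell$, choose $\BFk \ge \BFk^\ast$ componentwise with $|\BFk| = \ell$ (e.g.\ $\BFk = \BFk^\ast + (\ell - |\BFk^\ast|)\BFe_1$). Adding $\ell - |\BFk^\ast|$ extra rows to the matrix defining $r(\BFk^\ast)$ increases the rank by at most that many, so
\[
r(\BFk) \le r(\BFk^\ast) + (\ell - |\BFk^\ast|) \le (m-t) + (\ell - m + t - 1) = \ell - 1 < |\BFk|.
\]
By the equivalence in the first paragraph, $C_b(\BFk;P_n) > 1$, so no $\ell$ strictly greater than $m-t$ lies in the set. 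Combining both inequalities gives $L = m-t$, hence $t = m - L$, which is the claimed identity (the case $t = 0$ reduces to $L = m$, handled by the same argument since every $\BFk$ with $|\BFk| \le m$ is equidistributed).
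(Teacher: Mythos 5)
Your proof is correct and follows essentially the same route as the paper: both reduce the claim to the equivalence ``$C_b(\BFk;P_n)\le 1$ iff $r(\BFk)=|\BFk|$'' for $|\BFk|\le m$, established by the identical computation via the lower bound $(b^{m+1}-b^{|\BFk|})/(b^m-1)>1$. Your third paragraph, which propagates the failure of equidistribution from level $m-t+1$ to every level $\ell$ with $m-t+1\le\ell\le m$ using $r(\BFk)\le r(\BFk^\ast)+(\ell-|\BFk^\ast|)$, makes explicit a step the paper leaves implicit (the paper simply asserts that the pointwise equivalence suffices, which tacitly requires that the set of ``good'' levels $\ell$ is an initial segment), so your write-up is if anything the more complete of the two.
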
 

\begin{proof}
It is sufficient to show that $P_n$ is $\BFk-$equidistributed if and only if $C_b(\BFk;P_n) \le 1$. The ``only if'' statement follows from Lemma \ref{lem:formCkiffequid}.
To prove the ``if'' statement, assume that $P_n$ is not   $\BFk-$equidistributed for some $\BFk$ such that ${|\BFk|} \le m$. Then it means the rank  $r=r(\BFk)$ is such that  $r<{|\BFk|}$, and that  $M_b(\BFk;P_n) =b^{m}(b^{m-r}-1)$. Hence
\[
C_b(\BFk;P_n) = \frac{b^{|\BFk|} (b^{m}(b^{m-r}-1))}{n(n-1)} = \frac{b^{|\BFk|}(b^{m-r}-1)}{b^m-1} = \frac{b^{m+{|\BFk|}-r}-b^{|\BFk|}}{b^m-1}.
\]
Now, by assumption ${|\BFk|}-r \ge 1$, therefore 
\[
C_b(\BFk;P_n)  \ge \frac{b^{m+1}-b^{|\BFk|}}{b^m-1}. 
\]
Hence to prove that $C_b(\BFk;P_n)>1$ it is sufficient to show that 
\[
\frac{b^{m+1}-b^{|\BFk|}}{b^m-1} > 1 \Leftrightarrow b^{m+1}-b^m > b^{|\BFk|}-1 \Leftrightarrow
b^m(b-1) > b^{|\BFk|} - 1
\]
and since $b \ge 2$ and ${|\BFk|} \le m$, we get $b^m(b-1) \ge b^m > b^{|\BFk|} -1$.
\end{proof}


\subsection{A functional analysis approach}

In order to establish the decomposition and bound given in  \eqref{eq:GxywithCb}, as set out at the beginning of this section, we apply tools from functional analysis. To do so,
%
we first associate the joint pdf $\psi$ of a base$-b$ digitally scrambled point set 
\hchristiane{added ``of a ... set'' as per Ref 1}
\hjaspar{we require also that the pointset has no pair of points with a shared value in one or more of the coordinates.}\hchristiane{Made a reference to Thm 3.6 instead of repeating this assumption here}
with the vector $\psi=(\psi_\BFi)_{\BFi \in \N^s}\in\ell^\infty(\N^s)$, where $\psi_\BFi$ is the value assumed by $\psi$ on $D_\BFi^s$, as given in Theorem \ref{FormOfJointPDF}. This \emph{value vector} induces a continuous linear functional $\widehat\psi:\ell^1(\N^s)\to\C$ via the formula
\begin{equation}
\label{eq:hatpsi}
\widehat\psi(\eta):=\sum_{\BFi \in \N^s} \eta_\BFi\psi_\BFi,
\end{equation}
where $\eta = (\eta_{\BFi})_{\BFi \in \mathbb{N}^s} \in \ell^1(\N^s)$. Next, for each $\BFx,\BFy\in[0,1]^s$ we define
\[
V^s(\mathbf{x},\mathbf{y}):=(V^s_\BFi(\mathbf x,\mathbf y))_{\BFi \in \N^s}\in\ell^1(\N^s)
\]
to be the \emph{volume vector} of the region $R(\BFx,\BFy)$, and observe that
\[
\|V^s(\BFx,\BFy)\|_1=\vol(R(\BFx,\BFy)).
\]
With this notation $H(\BFx,\BFy;{}_b\tilde{P}_n)= \widehat\psi( V^s(\BFx,\BFy))$. As usual, in the special case $s=1$ we drop the exponent and write
\[
V(x,y):=(V_i(x,y))_{i=0}^\infty\in\ell^1(\N).
\]
By letting
\[
\mathcal C^s:=\Big\{\frac{V^s(\mathbf{x,y})}{\vol(R(\BFx,\BFy))}:\mathbf{x,y}\in (0,1]^s\Big\}\subseteq\ell^1(\N^s)
\]
be the set of normalized volume vectors and denoting the norm of $\widehat\psi$ over $\mathcal C^s$ by 
\[
\|\widehat\psi\|_{\mathcal C^s}:=\sup_{\eta\in\mathcal C^s}\widehat\psi(\eta)
\]
we get
\[
H(\BFx,\BFy;{}_b\tilde{P}_n)=\widehat\psi( V^s(\BFx,\BFy)) \leq\vol(R(\BFx,\BFy))\|\widehat\psi\|_{\mathcal C^s},
\]
which holds for all $\BFx,\BFy\in[0,1]^s$.  Thus, in the language of functional analysis our goal is to bound $\|\widehat\psi\|_{\mathcal C^s}$.
\hchristiane{Keep the convexity argument?}


\hchristiane{Combined the two statements together}
\hjaspar{Earlier in the paper we use $[0,1)^s$ rather than $[0,1]^s$. Is there a reason we switch here?}
\begin{defi} 
\ {}
\label{def:bminusk}
\begin{enumerate}
\item For each $\BFk\in\N^s$ we define $S^\BFk:\ell^1(\N^s)\to\ell^1(\N^s)$ to be the shift 
operator that acts on the standard basis $\{e_\BFi\}_{\BFi\in\N^s}$ according to the rule $S^\BFk e_\BFi=e_{\BFi+\BFk}$, where $e_\BFi\in\ell^1(\N^s)$ is the vector whose $\BFi$th coordinate is 1 and is otherwise 0.
%
\item \label{NotationForSmallHilbertSpaceLemma}
		Given $\BFk=(k_1,\dots,k_s)\in\N^s$ and $\BFx\in[0,1]^s$ we define 
		\[
		b^{-\mathbf k}\BFx:=(b^{-k_1}x_1,\dots,b^{-k_s}x_s).
		\]
		\end{enumerate}
\end{defi}

\begin{lemma}\label{SmallHilbertSpaceLemma}
	 Let $\BFx,\BFy\in[0,1]^s$ and $\BFk\in\N^s$. Then
	\[
	S^\BFk V^s(\BFx,\BFy)=b^{2{|\BFk|}}V^s(b^{-\mathbf k}\BFx,b^{-\mathbf k}\BFy).
	\]
	In particular $S^\BFk\mathcal C^s\subseteq \mathcal C^s$.
\end{lemma}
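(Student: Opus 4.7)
The plan is to establish the claim by first proving a one-dimensional version and then passing to $s$ dimensions via the product formula $V_{\BFi}^s(\BFx,\BFy)=\prod_{j=1}^s V_{i_j}(x_j,y_j)$ recorded in Section 2.3.

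For the one-dimensional step, the key observation is a digit-shift identity: for any $u,v\in[0,1)$ and any $k\ge 0$, the base-$b$ expansions of $b^{-k}u$ and $b^{-k}v$ each begin with $k$ zero digits followed by the expansions of $u$ and $v$, so $\gamma_b(b^{-k}u,b^{-k}v)=k+\gamma_b(u,v)$. I would use this first to dispose of the case $j<k$: every point $(u,v)\in[0,b^{-k}x)\times[0,b^{-k}y)$ lies in $[0,b^{-k})^2$, so its first $k$ digits in both coordinates are $0$, forcing $\gamma_b(u,v)\ge k>j$ and hence $V_j(b^{-k}x,b^{-k}y)=0$. Then, for $j\ge k$, I would apply the change of variables $(u,v)\mapsto(b^{-k}u,b^{-k}v)$ in the integral defining $V_j(b^{-k}x,b^{-k}y)$; the Jacobian contributes $b^{-2k}$ and the indicator $1_{D_j}$ pulls back to $1_{D_{j-k}}$ by the digit-shift identity, giving $V_j(b^{-k}x,b^{-k}y)=b^{-2k}V_{j-k}(x,y)$, which rearranges to $V_{j-k}(x,y)=b^{2k}V_j(b^{-k}x,b^{-k}y)$.

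Passing to $s$ dimensions is then a matter of bookkeeping. By the definition of $S^{\BFk}$, the $\BFi$-th coordinate of $S^{\BFk}V^s(\BFx,\BFy)$ equals $V_{\BFi-\BFk}^s(\BFx,\BFy)$ when $\BFi\ge\BFk$ componentwise and $0$ otherwise. Applying the product formula coordinatewise, both cases collapse into the single expression $b^{2|\BFk|}V_{\BFi}^s(b^{-\BFk}\BFx,b^{-\BFk}\BFy)$: when $\BFi\ge\BFk$ each factor $V_{i_\ell-k_\ell}(x_\ell,y_\ell)$ contributes $b^{2k_\ell}V_{i_\ell}(b^{-k_\ell}x_\ell,b^{-k_\ell}y_\ell)$, and when some $i_\ell<k_\ell$ the corresponding factor on the right vanishes by the one-dimensional vanishing step, matching the left-hand side.

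For the inclusion $S^{\BFk}\mathcal C^s\subseteq\mathcal C^s$, I would take a representative $V^s(\BFx,\BFy)/\vol(R(\BFx,\BFy))\in\mathcal C^s$ with $\BFx,\BFy\in(0,1]^s$, apply the identity just proved, and use $\vol(R(b^{-\BFk}\BFx,b^{-\BFk}\BFy))=b^{-2|\BFk|}\vol(R(\BFx,\BFy))$ to see that the factor $b^{2|\BFk|}$ cancels, producing the normalized volume vector at $(b^{-\BFk}\BFx,b^{-\BFk}\BFy)\in(0,1]^s\times(0,1]^s$. The only mildly delicate point is the change of variables in the one-dimensional step; the boundary cases $x=1$ or $y=1$ cause no trouble because the exceptional set $\{u=v\}$ on which $\gamma_b$ is infinite has Lebesgue measure zero, and the digit-shift identity holds on the complement.
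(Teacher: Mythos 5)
Your proof is correct and follows essentially the same route as the paper's: the one-dimensional digit-shift identity $\gamma_b(b^{-k}u,b^{-k}v)=\gamma_b(u,v)+k$, the resulting scaling relation $b^{2k}V_j(b^{-k}x,b^{-k}y)=V_{j-k}(x,y)$ for $j\ge k$ (with vanishing for $j<k$), and the passage to $s$ dimensions via the product formula for $V_{\BFi}^s$. Your explicit verification of the normalization cancellation for the inclusion $S^{\BFk}\mathcal C^s\subseteq\mathcal C^s$ is a welcome detail the paper leaves implicit.
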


\begin{proof}
We start by observing $\gamma_b(b^{-k}u,b^{-k}v)=\gamma_b(u,v)+k$, for all $k \in \N$ and $u,v \in [0,1)$. From this it follows  that given $x,y \in [0,1]$, the region $R(b^{-k}x,b^{-k}y)\cap D_{i+k}$ can be obtained by scaling the region $R(x,y) \cap D_i$ by a factor of $b^{-k}$ in each coordinate. Indeed
\[
 R(b^{-k}x,b^{-k}y) \cap D_{i+k}=\{(b^{-k}u,b^{-k}v):(u,v)\in  R(x,y) \cap D_i\}.
\]
Since no pair $(u,v) \in R(b^{-k}x,b^{-k}y)$ can have less than $k$ initial common digits, we can write
\[
b^{2k} V_i(b^{-k}x,b^{-k}y) = \begin{cases} V_{i-k}(x,y) &\mbox{ if } k \le i, \\ 0  &\mbox{ otherwise.} \end{cases}
\]
Now,
\begin{align*}
S^\BFk V^s(\BFx,\BFy) &= \sum_{\BFi \in \N^s} V_{\BFi}^s(\BFx,\BFy) e_{\BFi+\BFk} 
= \sum_{\BFi \in \N^s: \BFi \ge \BFk} V_{\BFi - \BFk}^s (\BFx,\BFy) e_{\BFi} \\
&= \sum_{\BFi \in \N^s: \BFi \ge \BFk} \left( \prod_{j=1}^s V_{i_j-k_j}(x_j,y_j)\right) e_\BFi \\
&=\sum_{\BFi \in \N^s} \left( \prod_{j=1}^s b^{2k_j}V_{i_j}(b^{-k_j}x_j,b^{-k_j}y_j)\right) e_\BFi \\
&=b^{2{|\BFk|}} \sum_{\BFi \in \N^s} V_{\BFi}^s (b^{-\BFk}\BFx,b^{-\BFk} \BFy) e_\BFi = b^{2{|\BFk|}} V^s(b^{-\BFk}\BFx,b^{-\BFk} \BFy). \qedhere
\end{align*}
\end{proof}

\begin{defi}
	Let $\xi\in\ell^1(\N)$ and $\xi^s\in\ell^1(\N^s)$ be defined as
	\begin{align*}
	\xi&:=V(1,1)=(\text{Vol}(D_i))_{i=0}^\infty=(\tfrac{b-1}{b^{i+1}})_{i=0}^\infty \text{ and}\\
	\xi^s&:=V^s(\mathbf{1},\mathbf{1})=(\text{Vol}(D^s_\BFi))_{\BFi\in\N^s}=(\tfrac{(b-1)^s}{b^{s+{|\BFi\,|}}})_{\BFi\in\N^s}.
	\end{align*}
\end{defi}
\hchristiane{I think you should add a general reference to Schauder bases when they are first mentioned (Def. 4.7) and ideally, a sentence or two on why we need them here. That could also help clarify why $\mathbb{C}$ shows up etc. Based on the referee's comments and my own impression, this is not a tool that is widely known.}
\hjaspar{A Schauder basis is simply the term used for the Banach space equivalent of a basis. The usual definition of a basis implies that every vector in the vector space can be written as a finite linear combination of the basis vectors. A Schauder basis allows for infinite sums. The best thing for someone to do if they are confused by the term is to read the first few sentences on the Wikipeadia entry. If the term is causing confusion we might as well remove it all together, we do not ever actually need to use this fact.}
\hchristiane{Looks like you decided to remove this but still appears in Def. 4.8. If we can remove the term there as well, then we should, since you're saying we're not using this fact anywhere. I would still make sure to define $\BFe_i$, as I think one of the referees was asking about this, and I think it's a valid point especially if we're not introducing it as the standard Schauder basis anymore.}
\hchristiane{Ref 2 is asking why we're ``switching to $\C$'': I assume it is a feature of this argument with Schauder bases}
\hjaspar{We use $\C$ becaue $\ell^1$ is typically a complex vector space. We could have easily used $\R$.}

\hjaspar{I think the following is both more clear and more concise. A Schauder basis is the correct term for what most people think of when they think of a basis for $\ell^1(\N)$.}
By Lemma \ref{SmallHilbertSpaceLemma}, we see that if $\BFk=(k_1,\dots,k_s)$, 
then 
\[
S^\BFk\xi^s=b^{2{|\BFk|}}V^s(b^{-\BFk}\mathbf 1,b^{-\BFk}\mathbf 1)\in\mathcal C^s
\]
is the normalized volume vector of $R(b^{-\BFk}\mathbf 1, b^{-\BFk}\mathbf 1)$, i.e., the region in $[0,1)^s\times[0,1)^s$ that is the product of two copies of the elementary $\BFk-$interval anchored at $\mathbf{0}$. It is also easy to verify that the standard basis vectors can be written as \[
e_k=\tfrac{b}{b-1}S^k\xi-\tfrac{1}{b-1}S^{k+1}\xi,
\]
for any $k \ge 0$. Thus
any vector in $\eta\in\ell^1(\N)$ may be written as
\begin{equation}
    \label{eq:etadecomp}
\eta=\sum_{k=0}^\infty \eta_ke_k
=\sum_{k=0}^\infty\eta_k\frac{bS^k\xi-S^{k+1}\xi}{b-1}
=\frac{b\eta_0}{b-1}\xi+\sum_{k=1}^\infty\frac{b\eta_k-\eta_{k+1}}{b-1}S^k\xi,
\end{equation}
where convergence is understood to be with respect to the norm of $\ell^1(\N)$. 
\hchristiane{could we replace norm topology by $\ell^1$ norm}\hjaspar{Yes}). This can be viewed as an ``elementary interval decomposition'' of $\eta$, i.e., a decomposition into the normalized volume vectors of 
$R(b^{-k}, b^{-k})$.
\hchristiane{changed so we're consistent with notation used 6 lines above, but in one dimension here}
The uniqueness of this decomposition follows easily from the fact that every \hchristiane{finite subset?}\hjaspar{yes} finite subset of the $S^k\xi$'s are linearly independent. To see this we observe that if $0\leq k_1<\dots<k_n$ then $S^{k_1}\xi$ is non-zero in the $k_1$ coordinate but all the other vectors $S^{k_i}\xi$ are zero in that coordinate.

\hjaspar{I seem to worry about little details more than is necessary. Uniqueness is not strictly necessary for us but I think it should be mentioned. I think that it is obvious, but maybe it is not. It follows from the fact that any finite set of the $S^k\xi$'s are linearly independent. (If $0\leq k_1<\dots<k_n$ then $S^{k_1}\xi$ is non-zero in the $k_1$ coordinate but all the other vectors $S^{k_i}\xi$ are zero there.)} 
\hchristiane{Seems like this should be a direct consequence of the fact that the $e_k$'s are a Schauder basis.}\hjaspar{This is part of the definition of a Schauder basis, so would need to be proved.}

In the special case where $\eta$ is equal to a volume vector $V(x,y)$, then 
by Lemma \ref{TechnicalLemma} we know the coefficients of $S^k\xi$ in the right-most sum of \eqref{eq:etadecomp} are non-negative for all $k\in\N$. That is, noting that when $\eta =V(x,y)$, then $\eta_k = V_k(x,y)$, we can decompose $V(x,y)$ as  
\begin{equation}
    \label{eq:Vxytk}
V(x,y) = \sum_{k=0}^\infty t_kS^k\xi
\end{equation}
where
\[
t_0=\frac{bV_0(x,y)}{b-1}\geq 0 \text{ and } t_k=\frac{bV_k(x,y)-V_{k-1}(x,y)}{b-1}\geq 0
\text{ for } k\geq 1,
\]
and the sum of the $t_k$'s equal the norm of $V(x,y)$. Indeed,
\[
\|V(x,y)\|_1=\Big\|\sum_{k=0}^\infty t_kS^k\xi\Big\|_1 = \sum_{k=0}^\infty t_k\Big\|S^k\xi\Big\|_1
=\sum_{k=0}^\infty t_k
\]
where the second equality follows from Lemma \ref{TechnicalLemma}, which implies $t_k \ge 0$, and the third one follows from the fact that  each $S^k\xi$ is a normalized vector.
\hchristiane{Changed this a little bit}


The next lemma shows that the volume vectors $V^s(\BFx,\BFy)$ arising from $s$-dimensional regions $R(\BFx,\BFy)$ satisfy \eqref{eq:Vxytk}, with $S^k\xi$ replaced with $S^\BFk\xi^s$. In this way, the above framework allows us to decompose the volume vectors of a region $R(\BFx,\BFy)$ as a conical combination (i.e.\ a linear combination with non-negative coefficients) of the volume vectors corresponding to the product of elementary intervals anchored at the origin.\hchristiane{Please go ahead and do this edit if you have something in mind. I wasn't sure what to write since this lemma is only dealing with $V$ and the $t_{\BFk}$'s and what I wrote at the beginning of the section was an analogy based on the $t_{\BFk}$ vs the $C_b(\BFk;P_n)$.}\hjaspar{Its good the way it is.}

\begin{lemma}\label{ConvCombRs}
\hjaspar{I changed the statement to include what the values of $t_k$ are. The statement holds for all $s$ and is easier to cite}
	Let $\mathbf x,\mathbf y\in[0,1]^s$ and for $j=1,\dots,s$ define
	\[
    t_{j,0}=\frac{bV_0(x_j,y_j)}{b-1}\text{ and } t_{j,k}=\frac{bV_k(x_j,y_j)-V_{k-1}(x_j,y_j)}{b-1}
    \text{ for } k\geq 1.
    \]
    Let $t_{\BFk}:=\prod_{j=1}^st_{j,k}$. Then $t_{\BFk} \ge 0$,
	\[
	V^s(\mathbf x,\mathbf y)=\sum_{\BFk \in \N^s} t_{\BFk}S^{\BFk}\xi^s, \text{ and } \sum_{\BFk \in \N^s} t_\BFk=\vol(R(\BFx,\BFy)).
	\]
\end{lemma}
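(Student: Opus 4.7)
The plan is to lift the one-dimensional decomposition \eqref{eq:Vxytk} to $s$ dimensions by exploiting the fact that every object in sight factorizes coordinatewise. Concretely, $V^s_\BFi(\BFx,\BFy)=\prod_{j=1}^s V_{i_j}(x_j,y_j)$ from \eqref{def:Vi}, $\xi^s_\BFi=\prod_{j=1}^s \xi_{i_j}$ by definition, and the shift operator acts coordinatewise: $(S^\BFk\xi^s)_\BFi=\xi^s_{\BFi-\BFk}$ whenever $\BFi\ge\BFk$ and is $0$ otherwise. Thus the $s$-dimensional claim should follow by taking a product of $s$ copies of the one-dimensional identity.

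First, I would argue non-negativity: by Lemma \ref{TechnicalLemma}, $bV_k(x_j,y_j)-V_{k-1}(x_j,y_j)\ge 0$ for $k\ge 1$ (and $V_0\ge 0$), so $t_{j,k}\ge 0$ for all $j$ and $k$, hence $t_\BFk=\prod_{j=1}^s t_{j,k_j}\ge 0$.

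Next, for the decomposition, I would apply the one-dimensional identity \eqref{eq:Vxytk} in each coordinate to obtain, for every $i\in\N$,
\[
V_i(x_j,y_j)=\sum_{k=0}^{i} t_{j,k}\,\xi_{i-k}.
\]
Multiplying these $s$ scalar identities coordinatewise and then rearranging the resulting $s$-fold sum gives
\[
V^s_\BFi(\BFx,\BFy)=\prod_{j=1}^s\sum_{k_j=0}^{i_j} t_{j,k_j}\,\xi_{i_j-k_j}
=\sum_{\BFk\le\BFi}\Big(\prod_{j=1}^s t_{j,k_j}\Big)\prod_{j=1}^s\xi_{i_j-k_j}
=\sum_{\BFk\le\BFi} t_\BFk\,\xi^s_{\BFi-\BFk},
\]
which is precisely the $\BFi$th coordinate of $\sum_{\BFk\in\N^s}t_\BFk S^\BFk\xi^s$. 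Convergence in $\ell^1(\N^s)$ is automatic because all coefficients and vectors have non-negative entries, so rearrangement and factorization of the double sum are valid.

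Finally, for the norm identity, factorization again gives
\[
\sum_{\BFk\in\N^s} t_\BFk=\prod_{j=1}^s\sum_{k=0}^\infty t_{j,k}=\prod_{j=1}^s \|V(x_j,y_j)\|_1=\prod_{j=1}^s x_j y_j=\vol(R(\BFx,\BFy)),
\]
where the second equality is the one-dimensional statement following \eqref{eq:Vxytk}, using that each $S^k\xi$ is normalized. There is no real obstacle here; the only thing to be careful about is that the proof of \eqref{eq:Vxytk} invoked Lemma \ref{TechnicalLemma} in an essential way, and the same lemma is what delivers non-negativity of the $t_{j,k}$'s in the present multivariate setting, so the whole argument is really a coordinatewise tensorization of the one-dimensional picture established earlier in the section.
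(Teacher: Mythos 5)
Your proof is correct and follows essentially the same route as the paper: both arguments rest on the one-dimensional decomposition \eqref{eq:Vxytk} (hence on Lemma \ref{TechnicalLemma} for non-negativity of the $t_{j,k}$) and then tensorize it across the $s$ coordinates. The only difference is presentational --- the paper encodes the convolution $V_i=\sum_{k\le i}t_k\xi_{i-k}$ and its $s$-fold product as a power-series identity $f^s(\BFz)=g^s(\BFz)h^s(\BFz)$ evaluated at $\BFz=\mathbf{1}$, whereas you carry out the same Cauchy-product rearrangement directly on the coordinates; the two computations are identical in content.
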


\begin{proof}
In this proof we identify elements $(\eta_i)_{i=0}^{\infty} \in \ell^1(\N)$ and $(\eta_\BFi)_{\BFi \in \N^s} \in \ell^1(\N^s)$ with the power series
\[
\sum_{i=0}^{\infty} \eta_i z^i \mbox{ and } \sum_{\BFi \in \N^s} \eta_\BFi \BFz^\BFi
\]
where $\BFz^\BFi = \prod_{j=1}^s z_j^{i_j}$. In particular, we define

	\begin{equation*}
	\begin{aligned}[c]
	f_j(z)&=\sum_{i=0}^\infty V_i(x_j,y_j)z^i,\\
	g(z)&=\sum_{i=0}^\infty \xi_i z^i,\text{ and }
	\end{aligned}
	\quad
	\begin{aligned}[c]
	 f^s(z)&=\sum_{\BFi \in \N^s}^\infty V_\BFi^s(\mathbf x,\mathbf y)\BFz^\BFi,\\
	 g^s(z)&=\sum_{\BFi \in \N^s}^\infty \xi_{\BFi}^s \BFz^\BFi, 
	\end{aligned}
	\end{equation*}
	\hchristiane{WOuld it be better to replace $V_i(1,1)$ by $\xi_i$ and $V_{\BFi}(\mathbf{1}\mathbf{1})$ by $\xi_{\BFi}^s$? Would make the sentence after the display easier to process.}\hjaspar{Seems reasonable}
	and we observe that $S^k \xi$ and $S^\BFk \xi^s$ correspond to $z^k g(z)$ and $\BFz^\BFk g^s(\BFz)$ respectively. The discussion preceding this lemma shows that  $f_j(z)=g(z)h_j(z)$, where $h_j(z) = \sum_{k=0}t_{j,k}z^k$ and $t_{j,k} \ge 0$ from Lemma \ref{TechnicalLemma}. Since $f^s(\BFz) = \prod_{j=1}^s f_j(z_j)$ and $g^s(\BFz) = \prod_{j=1}^s g(z_j)$, we have	
	\begin{equation*}
	f^s(\BFz)
	=\prod_{j=1}^sg(z_j)h_j(z_j)
		=g^s(\BFz)h^s(\BFz),
	\end{equation*}
	where $h^s(\BFz)=\sum_{\BFk \in \N^s} t_\BFk \BFz^\BFk$ is the product of the $h_j(z_j)$'s. Finally, evaluating $f^s(\BFz)$ and $g^s(\BFz)$ at $\BFz = \bf{1}$ yields
	\[
	f^s(\mathbf{1})=
	\sum_{\BFi \in \N^s} V_{\BFi}^{s}(\BFx,\BFy) = \|V^s(\BFx,\BFy)\|_1 = \vol(R(\BFx,\BFy))
	\]
	and $g^s(\bf{1})=1$, thus
	\[
	\sum_{\BFk\in\N^s} t_\BFk=g^s(\mathbf{1})h^s(\mathbf{1})=f^s(\mathbf{1})=\vol(R(\BFx,\BFy)).\qedhere
	\]
\end{proof}

Now that we have shown how to decompose the volume vector $V^s(\BFx,\BFy)$ into an elementary interval decomposition indexed by $\BFk$, we are 
ready to bring back the $C_b(\BFk;P_n)$ values and explain the relation between these values and 
the joint pdf associated with ${}_b\tilde{P}_n$.

\begin{lemma}
\label{lem:hatpsiCk}
Let $P_n$ be a point set with $n$ points in $[0,1)^s$ such that the $j$th coordinate of the points are all distinct,  for each $j=1,\ldots,s$. 
Let ${}_b\tilde{P}_n$ be the sampling scheme obtained by applying a base $b-$digital scramble to $P_n$. If $\psi(\BFx,\BFy)$ denotes the joint pdf of two distinct points randomly chosen from ${}_b\tilde{P}_n$ and  $\widehat{\psi}(\cdot)$ is the linear functional defined in \eqref{eq:hatpsi}, then
\[
\widehat{\psi}(S^{\BFk} \xi^s) = C_b(\BFk;P_n).
\]
\end{lemma}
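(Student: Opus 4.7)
The plan is to unfold both sides from their definitions and verify that the series telescopes into the right expression. First I would write out the shifted vector explicitly: by Definition \ref{def:bminusk}, $(S^{\BFk}\xi^s)_{\BFi} = \xi^s_{\BFi-\BFk}$ when $\BFi \ge \BFk$ and $0$ otherwise, so
\[
(S^{\BFk}\xi^s)_{\BFi} = \frac{(b-1)^s}{b^{s+|\BFi|-|\BFk|}} \quad \text{for } \BFi \ge \BFk.
\]

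Next I would substitute this, together with the formula for $\psi_{\BFi}$ from Theorem \ref{FormOfJointPDF}, into the defining series \eqref{eq:hatpsi} of $\widehat{\psi}$. The terms with $|\BFi| = \infty$ drop out because $\psi_{\BFi} = 0$ there (and also $N_b(\BFi;P_n)=0$ under the distinct-coordinates assumption from Remark \ref{rem:NoRepeatCoord}). For the finite terms, the $(b-1)^s$ factors cancel and the powers of $b$ combine to leave
\[
\widehat{\psi}(S^{\BFk}\xi^s) = \sum_{\BFi \ge \BFk} \frac{(b-1)^s}{b^{s+|\BFi|-|\BFk|}} \cdot \frac{N_b(\BFi;P_n)}{n(n-1)} \cdot \frac{b^{s+|\BFi|}}{(b-1)^s} = \frac{b^{|\BFk|}}{n(n-1)} \sum_{\BFi \ge \BFk} N_b(\BFi;P_n).
\]

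The final step is the combinatorial identity $M_b(\BFk;P_n) = \sum_{\BFi \ge \BFk} N_b(\BFi;P_n)$, which is immediate from Definition \ref{def:mknipn}: an ordered pair of distinct points has $\boldsymbol{\gamma}^s_b \ge \BFk$ if and only if $\boldsymbol{\gamma}^s_b = \BFi$ for some $\BFi \ge \BFk$, and these cases are mutually exclusive. Plugging this in and comparing with Definition \ref{def:Cbk} yields $\widehat{\psi}(S^{\BFk}\xi^s) = C_b(\BFk;P_n)$.

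There is no real obstacle here; the result is essentially a bookkeeping computation that confirms that the functional-analytic setup introduced above reproduces precisely the combinatorial quantity $C_b(\BFk;P_n)$. The only point requiring minor care is the handling of $|\BFi| = \infty$, which is resolved by the hypothesis that the one-dimensional projections of $P_n$ have $n$ distinct points, so that $N_b(\BFi;P_n) = 0$ for any such $\BFi$ and the sum is actually finite.
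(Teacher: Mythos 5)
Your proposal is correct and follows essentially the same route as the paper's proof: expand $S^{\BFk}\xi^s$ coordinatewise, substitute the formula for $\psi_{\BFi}$ from Theorem \ref{FormOfJointPDF} so that the $(b-1)^s$ and $b^{s+|\BFi|}$ factors cancel to leave $b^{|\BFk|}$, and then use $M_b(\BFk;P_n)=\sum_{\BFi\ge\BFk}N_b(\BFi;P_n)$ together with the fact that these counts agree for $P_n$ and ${}_b\tilde{P}_n$. The only (immaterial) difference is that you reindex the sum over $\BFi\ge\BFk$ whereas the paper sums $\psi_{\BFi+\BFk}\,\xi^s_{\BFi}$ over $\BFi\in\N^s$.
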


\begin{proof}
From Theorem \ref{FormOfJointPDF}, for each $\BFk \in\N^s$ we have
\hchristiane{clarify that the sum is finite so that we don't include the $N_b(\infty)$ term?}
\begin{align*}
 \widehat{\psi}(S^\BFk\xi^s)
 &=\widehat{\psi}\left(\sum_{\BFi \in \N^s} \frac{(b-1)^s}{b^{s+{|\BFi\,|}}} e_{\BFi+\BFk}\right)
 =\sum_{\BFi \in \N^s} \psi_{\BFi+\BFk} \frac{(b-1)^s}{b^{s+{|\BFi\,|}}}\\
 &=\sum_{\BFi \in \N^s}  \frac{N_b(\BFi+\BFk; {}_b\tilde{P}_n)}{n(n-1)}\frac{b^{s+{|\BFi\,|}+{|\BFk|}}}{(b-1)^s} \frac{(b-1)^s}{b^{s+{|\BFi\,|}}}
 =\sum_{\BFi\in\N^s:\BFi \ge \BFk}\frac{b^{|\BFk|} N_b(\BFi; {}_b\tilde{P}_n)}{n(n-1)}\\
&=\frac{b^k M_b(\BFk;{}_b\tilde{P}_n)}{n(n-1)} =\frac{b^k M_b(\BFk;P_n)}{n(n-1)} \mbox{ (from Remark \ref{rem:MbNb}(v))}. \qedhere
\end{align*}
\end{proof}

\hchristiane{added}
Note that Lemmas \ref{ConvCombRs} and  \ref{lem:hatpsiCk} both make use of the same decomposition based on elementary intervals indexed by $\BFk$. By combining them, we obtain the following theorem, which was already announced at the beginning of this section.

\hchristiane{Maybe we should add $G(\BFx,\BFy)$ on the LHS since this is how we presented this at the beginning of the section}
\begin{thm}\label{thm:C}
	Let $P_n$ be a deterministic point set of size $n$ and $b \ge 2$ be an integer. Assume $P_n$ is such that the $j$th coordinate of the points are all distinct  for $j=1,\ldots,s$. 
	Let ${}_b\tilde{P}_n$ be the sampling scheme obtained by applying a base $b-$digital scramble to $P_n$ and let $\psi(\BFu,\BFv)$ be the joint pdf of two distinct points randomly chosen from ${}_b\tilde{P}_n$. 
	Let $t_{\BFk}$ be the coefficient defined in Lemma \ref{ConvCombRs} for a given $\BFx,\BFy \in [0,1]^s$. Then
	\[
	\int_{R(\BFx,\BFy)}\psi(\BFu,\BFv)d\BFu d\BFv = \sum_{\BFk \in \N^s} t_{\BFk}\, C_b(\BFk;P_n)
	\]
	In particular,
	\begin{equation}
	\label{eq:BoundIntPsi}
	\int_{R(\BFx,\BFy)}\psi(\BFu,\BFv)d\BFu d\BFv  \le \vol(R(\BFx,\BFy)) \max_{\BFk \in \N^s} C_b(\BFk;P_n).
	\end{equation}
\end{thm}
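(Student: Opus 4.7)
The plan is to simply string together the two preparatory lemmas, since they have been set up precisely to yield the stated identity. All the real work has been done in Lemma \ref{ConvCombRs} (decomposition of the volume vector) and Lemma \ref{lem:hatpsiCk} (evaluation of $\widehat\psi$ on the shifted standard vectors). So the proof will amount to applying $\widehat\psi$ to the decomposition, and then bounding the resulting sum.

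First I would rewrite the integral using the functional–analytic reformulation built up in Section~\ref{sec:nlod}. By definition of the value vector $\psi=(\psi_\BFi)_{\BFi\in\N^s}$ and the volume vector $V^s(\BFx,\BFy)$, and using the fact that $\psi$ is constant on each $D_\BFi^s$ (Remark \ref{rem:ShortScrProp}), one has
\[
\int_{R(\BFx,\BFy)}\psi(\BFu,\BFv)\,d\BFu\,d\BFv \;=\; \sum_{\BFi\in\N^s} V_\BFi^s(\BFx,\BFy)\,\psi_\BFi \;=\; \widehat\psi\bigl(V^s(\BFx,\BFy)\bigr),
\]
with the series converging absolutely because $V^s(\BFx,\BFy)\in\ell^1(\N^s)$ and $\psi\in\ell^\infty(\N^s)$ (using the hypothesis that the $j$th coordinates of $P_n$ are all distinct, so $\psi$ is bounded).

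Second, I would apply Lemma \ref{ConvCombRs} to write
\[
V^s(\BFx,\BFy) \;=\; \sum_{\BFk\in\N^s} t_\BFk\, S^\BFk\xi^s,
\]
with $t_\BFk\ge 0$ and $\sum_{\BFk\in\N^s} t_\BFk = \vol(R(\BFx,\BFy))$. Since $\widehat\psi$ is a continuous linear functional on $\ell^1(\N^s)$ and the series converges in the $\ell^1$-norm, I may interchange $\widehat\psi$ and the summation. Combining this with Lemma \ref{lem:hatpsiCk}, which gives $\widehat\psi(S^\BFk\xi^s)=C_b(\BFk;P_n)$, yields
\[
\widehat\psi\bigl(V^s(\BFx,\BFy)\bigr) \;=\; \sum_{\BFk\in\N^s} t_\BFk\, \widehat\psi(S^\BFk\xi^s) \;=\; \sum_{\BFk\in\N^s} t_\BFk\, C_b(\BFk;P_n),
\]
which is exactly the identity claimed.

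Finally, for the inequality \eqref{eq:BoundIntPsi}, I would use that each $t_\BFk\ge 0$ and bound $C_b(\BFk;P_n)\le \max_{\BFk\in\N^s} C_b(\BFk;P_n)$ termwise in the above sum, then factor out the maximum and use $\sum_{\BFk\in\N^s} t_\BFk = \vol(R(\BFx,\BFy))$. The only nontrivial technical point is making sure the pointwise bound is legitimate, which relies on the nonnegativity of $t_\BFk$ (a consequence of Lemma \ref{TechnicalLemma}) and on the fact that the maximum in question is finite, which follows from the observation after Definition \ref{DefCqe} that $C_b(\BFk;P_n)=0$ for all but finitely many $\BFk$. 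No other obstacles arise.
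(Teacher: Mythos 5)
Your proof is correct and follows essentially the same route as the paper: both express the integral as $\widehat\psi(V^s(\BFx,\BFy))$, apply the decomposition of Lemma \ref{ConvCombRs}, interchange $\widehat\psi$ with the sum by continuity, invoke Lemma \ref{lem:hatpsiCk} for $\widehat\psi(S^\BFk\xi^s)=C_b(\BFk;P_n)$, and then bound termwise using $t_\BFk\ge 0$, $\sum_\BFk t_\BFk=\vol(R(\BFx,\BFy))$, and the finiteness of the set of $\BFk$ with $C_b(\BFk;P_n)>0$. No issues.
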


\begin{proof}
As stated in  Lemma \ref{ConvCombRs},
the coefficients $t_{\BFk}$ satisfy 
$t_\BFk\geq 0$, $\sum_{\BFk \in \N^s} t_\BFk=\vol(R(\BFx,\BFy))$ and  $V^s(\mathbf{x,y}) =\sum_{\BFk \in \N^s} t_\BFk S^\BFk\xi^s$. Now
\begin{align*}
\int_{R(\BFx,\BFy)}\psi(\BFu,\BFv)d\BFu d\BFv
&= \widehat{\psi}(V^s(\mathbf{x,y}))=\widehat{\psi}\left( \sum_{\BFk \in \N^s} t_\BFk S^\BFk\xi^s\right) \\
&=\sum_{\BFk \in \N^s} t_\BFk  \widehat{\psi} (S^\BFk\xi^s) 
= \sum_{\BFk \in \N^s} t_\BFk C_b(\BFk;P_n),
\end{align*}
where the last equality follows from Lemma \ref{lem:hatpsiCk}. The inequality \eqref{eq:BoundIntPsi} is obtained by using the fact that $t_{\BFk} \ge 0$, $\sum_{\BFk \in \N^s} t_\BFk=\vol(R(\BFx,\BFy))$, and also recalling that only a finite number of vectors $\BFk$ are such that $C_b(\BFk;P_n)>0$ because of our assumption on $P_n$ having distinct coordinates, as discussed after Definition \ref{DefCqe}.
\end{proof}

\hchristiane{This first sentence was previously singled out as a remark, but I removed it as per Ref 1: we could put it back as a Remark, or just edit this whole paragraph altogether.}
From Theorem \ref{thm:C} and as discussed when presenting Eq.\ \eqref{eq:GxywithCb} at the beginning of this section, it is clear that the quantity $C_b = \max_{\BFk \in \N^s} C_b(\BFk;P_n)$ plays an important role in determining whether or not ${}_b\tilde{P}_n$ is NLOD. This will be clarified in Theorem \ref{cor:NLODiffCle1}. 
\hchristiane{Moved, as discussed}
In particular, we note that  two nets with the same value of $t$, $m$, and $s$ may have different values for the $C_b(\BFk;P_n)$ values. 
A natural question is then: ``What characteristics of $P_n$ can be measured by the $C_b(\BFk;P_n)$ values while not being captured by the parameter $t$?''
The next result provides some answers by showing that the values $C_b(\BFk;P_n)$ can be used to differentiate two point sets with respect to their propensity for negative dependence.  More precisely, it shows  that the 
$C_b(\BFk;P_n)$ values are able to capture the difference between the two nets in their ability to keep the integral of the joint pdf small. The parameter $t$ fails to capture this difference because it aggregates too much information regarding the equidistribution properties of $P_n$.

\begin{cor}
Let $P_n$ and $P_n'$ be deterministic point sets of size $n$ in $[0,1)^s$ such that the $j$th coordinate of the points are all distinct,  for each $j=1,\ldots,s$. 
Let ${}_b\tilde{P}_n$ and ${}_b\tilde{P}_n'$ be the sampling schemes obtained by applying a base $b-$digital scramble to $P_n$ and $P_n'$, respectively. Let $\psi(\BFu,\BFv)$ and  $\psi'(\BFu,\BFv)$ be the joint pdf of two distinct points randomly chosen from ${}_b\tilde{P}_n$ and ${}_b\tilde{P}_n'$, respectively. Then the following are equivalent:
\begin{enumerate}
\item For all $\BFx,\BFy \in [0,1]^s$, 
\[
	\int_{R(\BFx,\BFy)}\psi(\BFu,\BFv)d\BFu d\BFv
	\le 
	\int_{R(\BFx,\BFy)}\psi'(\BFu,\BFv)d\BFu d\BFv.
	\]
\item $C_b(\BFk;P_n) \le C_b(\BFk;P_n')$ for all $\BFk \in \N^s$.	
\end{enumerate}
\end{cor}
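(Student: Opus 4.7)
The corollary is an equivalence, so two implications must be handled. The direction (2) $\Rightarrow$ (1) is essentially immediate from Theorem \ref{thm:C}: for any $\BFx,\BFy\in[0,1]^s$, the coefficients $t_\BFk$ of Lemma \ref{ConvCombRs} depend only on $\BFx$ and $\BFy$ (not on $P_n$) and are non-negative, so the representation
\[
\int_{R(\BFx,\BFy)}\psi(\BFu,\BFv)\,d\BFu\,d\BFv = \sum_{\BFk\in\N^s} t_\BFk\, C_b(\BFk;P_n)
\]
and the corresponding expression for $\psi'$ yield the desired integral inequality whenever $C_b(\BFk;P_n)\le C_b(\BFk;P_n')$ for all $\BFk$.

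For the converse (1) $\Rightarrow$ (2), the plan is to isolate the individual quantities $C_b(\BFk;P_n)$ by choosing the region $R(\BFx,\BFy)$ to be a single product of elementary intervals. Fix $\BFk\in\N^s$ and set $\BFx=\BFy=b^{-\BFk}\mathbf{1}$, so that $R(\BFx,\BFy)$ is exactly the square anchored at $\mathbf{0}$ formed by two copies of the elementary $\BFk$-interval at the origin. By Lemma \ref{SmallHilbertSpaceLemma},
\[
V^s(b^{-\BFk}\mathbf{1},b^{-\BFk}\mathbf{1}) = b^{-2|\BFk|}\,S^\BFk\xi^s,
\]
and by Lemma \ref{lem:hatpsiCk}, $\widehat{\psi}(S^\BFk\xi^s) = C_b(\BFk;P_n)$. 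Combining these, and using that $\int_{R(\BFx,\BFy)}\psi\,d\BFu\,d\BFv = \widehat{\psi}(V^s(\BFx,\BFy))$, we obtain
\[
\int_{R(b^{-\BFk}\mathbf{1},\,b^{-\BFk}\mathbf{1})}\psi(\BFu,\BFv)\,d\BFu\,d\BFv = b^{-2|\BFk|}\,C_b(\BFk;P_n),
\]
and likewise for $\psi'$ and $P_n'$. Applying hypothesis (1) to this particular choice of $\BFx,\BFy$ and cancelling the positive factor $b^{-2|\BFk|}$ yields $C_b(\BFk;P_n) \le C_b(\BFk;P_n')$. Since $\BFk\in\N^s$ was arbitrary, (2) follows.

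There is no real obstacle here once Lemmas \ref{SmallHilbertSpaceLemma} and \ref{lem:hatpsiCk} are in hand; the only conceptual point is recognizing that the volume vectors $S^\BFk\xi^s$ that served as the ``building blocks'' of the elementary-interval decomposition in Lemma \ref{ConvCombRs} are themselves (up to normalization) volume vectors of admissible regions $R(\BFx,\BFy)$. This is precisely what allows the family of test regions $\{R(b^{-\BFk}\mathbf{1},b^{-\BFk}\mathbf{1}):\BFk\in\N^s\}$ to separate the coefficients $C_b(\BFk;P_n)$ one at a time.
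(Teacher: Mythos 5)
Your proof is correct and follows essentially the same route as the paper: the paper also derives (2) $\Rightarrow$ (1) from Theorem \ref{thm:C}, and for the converse it uses exactly the same test regions $R(b^{-\BFk}\mathbf{1},b^{-\BFk}\mathbf{1})$ together with Lemmas \ref{SmallHilbertSpaceLemma} and \ref{lem:hatpsiCk} (the paper merely phrases this direction contrapositively).
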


\begin{proof}
The fact that (2) $\Rightarrow$ (1) is easily established using Theorem \ref{thm:C}. 
\hchristiane{I included a proof, as requested by Ref 1}
To prove that (1) implies (2), 
assume there exists a $\BFk \in \N^s$ for which $C_b(\BFk;P_n) > C_b(\BFk;P_n')$. Let $\BFx=\BFy=(b^{-k_1},\ldots,b^{-k_s})$. Then 
\begin{align*}
\int_{R(\BFx,\BFy)} \psi(\BFu,\BFv)d\BFu d\BFv &=\frac{1}{b^{2{|\BFk|}}} \widehat{\psi}(S^\BFk\xi^s)
=\frac{1}{b^{2{|\BFk|}}} C_b(\BFk;P_n)\\& > \frac{1}{b^{2{|\BFk|}}}C_b(\BFk;P_n')
= \int_{R(\BFx,\BFy)} \psi'(\BFu,\BFv)d\BFu d\BFv,
\end{align*}
and thus (1) does not hold. In the above displayed equation,
the first and second equality come from Lemma \ref{SmallHilbertSpaceLemma} and Lemma \ref{lem:hatpsiCk}, respectively.
\end{proof}

We need one more technical lemma before we proceed to the next result. This technical lemma will help us show that for base $b-$digitally scrambled point sets, the NLOD and NUOD properties are equivalent.

\hchristiane{$\mathbf{0}$ is included in the domain on the RHS, but $\mathbf{1}$ is not on the RHS. NUOD def'n includes the 1, so should we also include it on the LHS? Would it then be cleaner to redefine NUOD to exclude lower bound? }

\begin{lemma}
\label{lem:flipIntPdf}
Let $P_n = \{\BFV_1,\ldots,\BFV_n\}$ be a point set such that the $j$th coordinate of the points are all distinct, for all $j=1,\ldots,s$. Let ${}_b\tilde{P}_n$ be the sampling scheme obtained by applying a base $b-$digital scramble to $P_n$. Let $\psi(\BFu,\BFv)$ be the joint pdf of two distinct points randomly chosen from ${}_b\tilde{P}_n$. Then for all $\BFx,\BFy \in [0,1]^s$ we have 
\[
\int_{[\mathbf 1-\BFx, \mathbf 1)}\int_{[\mathbf 1 -\BFy,\mathbf 1)} \psi(\BFu,\BFv)d\BFu d\BFv = \int_{R(\BFx,\BFy)} \psi(\BFu,\BFv)d\BFu d\BFv
\]
\end{lemma}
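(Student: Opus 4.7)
The plan is to exploit the fact (established in Remark~\ref{rem:ShortScrProp} and used in the proof of Theorem~\ref{FormOfJointPDF}) that $\psi(\BFu,\BFv)$ is constant on each $D_{\BFi}^s$, with value $\psi_{\BFi}$ that depends only on $\BFi$. Under this observation, both integrals decompose as
\[
\int_{A} \psi(\BFu,\BFv)\,d\BFu\,d\BFv = \sum_{\BFi \in \N^s} \psi_{\BFi}\, \vol(A \cap D_{\BFi}^s),
\]
so it suffices to show that the lower-corner box $R(\BFx,\BFy)=[\mathbf{0},\BFx)\times[\mathbf{0},\BFy)$ and the upper-corner box $R'(\BFx,\BFy):=[\mathbf{1}-\BFx,\mathbf{1})\times[\mathbf{1}-\BFy,\mathbf{1})$ meet each $D_{\BFi}^s$ in sets of equal Lebesgue measure.

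To compare these measures I will use the coordinate-wise complement map $\sigma:[0,1)^{2s}\to(0,1]^{2s}$ defined by $\sigma(\BFu,\BFv)=(\mathbf 1-\BFu,\mathbf 1-\BFv)$. The map $\sigma$ is a Lebesgue measure-preserving bijection onto its image, and the difference between $\sigma([0,1)^{2s})$ and $[0,1)^{2s}$, as well as between $\sigma(R(\BFx,\BFy))$ and $R'(\BFx,\BFy)$, is a set of measure zero. The crucial observation is that $\sigma$ also preserves the partition $\{D_{\BFi}^s\}$ up to a null set: for any $u\in(0,1)$ whose base $b$ expansion does not terminate, the digits of $1-u$ are given by $(1-u)_r = b-1-u_r$, so for two such $u,v$ with non-terminating expansions we have $(1-u)_r=(1-v)_r \iff u_r=v_r$, whence $\gamma_b(1-u,1-v)=\gamma_b(u,v)$. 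Since the set of points with non-terminating expansions has full Lebesgue measure in $[0,1)$, we get $\boldsymbol{\gamma}_b^s(\sigma(\BFu,\BFv))=\boldsymbol{\gamma}_b^s(\BFu,\BFv)$ for almost every $(\BFu,\BFv)$, hence $\sigma(D_{\BFi}^s)=D_{\BFi}^s$ modulo a null set.

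Combining these two facts, for each $\BFi\in\N^s$
\[
\vol(R'(\BFx,\BFy)\cap D_{\BFi}^s)=\vol(\sigma(R(\BFx,\BFy))\cap D_{\BFi}^s)=\vol(\sigma(R(\BFx,\BFy)\cap D_{\BFi}^s))=\vol(R(\BFx,\BFy)\cap D_{\BFi}^s),
\]
where the outer equalities hold up to null sets (which do not affect the Lebesgue measure) and the middle one uses that $\sigma$ preserves the partition almost everywhere. Summing against the constants $\psi_{\BFi}$ yields the desired identity.

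The only delicate step is verifying the digit identity $\gamma_b(1-u,1-v)=\gamma_b(u,v)$ while correctly quarantining the measure-zero set of numbers with a terminating base $b$ expansion (for which $1-u$ involves digit carries). I would state this as a short sublemma and note that the exceptional set is a countable union of hyperplanes, hence Lebesgue null in $[0,1)^{2s}$, so it does not affect the integrals of the bounded function $\psi$.
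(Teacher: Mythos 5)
Your proposal is correct and follows essentially the same route as the paper's proof: both rest on the reflection map $(\BFu,\BFv)\mapsto(\mathbf 1-\BFu,\mathbf 1-\BFv)$, the digit identity $\gamma_b(1-u,1-v)=\gamma_b(u,v)$ away from the null set of terminating base-$b$ expansions, the resulting invariance of each $D_{\BFi}^s$ up to measure zero, and the fact that $\psi$ is constant on each $D_{\BFi}^s$. Your explicit decomposition $\int_A\psi=\sum_{\BFi}\psi_{\BFi}\vol(A\cap D_{\BFi}^s)$ is just a slightly more spelled-out version of the paper's appeal to integration by substitution.
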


\begin{proof}
The set $A \subseteq \mathbb{R}$ containing all numbers with finite base $b$ expansion is $\{ab^{-k}:a \in \mathbb{Z},k \in \N\}$, a set of Lebesgue measure 0. If $u,v \in [0,1)\cap A^c$ have base $b$ expansions $\sum_{i=1}^{\infty} u_i b^{-i}$ and $\sum_{i=1}^{\infty} v_i b^{-i}$, respectively, then the base $b$ expansions of $1-u$ and $1-v$ are
\hchristiane{We're switching from $\psi(\BFu,\BFv)$ to $\psi(\BFx,\BFy)$. We have the $\BFx,\BFy$ in the integral domain so should really stick to $\BFu,\BFv$ for $\psi$.}\hjaspar{Okay}
\[
\sum_{i=1}^{\infty} \frac{(b-1)-u_i}{b^i} \mbox{ and } \sum_{i=1}^{\infty} \frac{(b-1)-v_i}{b^i} 
\]
respectively. It follows that $\gamma_b(u,v) = \gamma_b(1-u,1-v)$ almost everywhere and that $D_i$ is, up to a set of measure 0, invariant under the transformation $(u,v) \rightarrow (1-u,1-v)$. This means that $D_{\BFi}^s$ is also invariant, up to a set of measure 0, under the transformation $(\BFu,\BFv) \rightarrow (\mathbf 1-\BFu,\mathbf 1-\BFv)$. Because $\psi(\BFu,\BFv)$ is constant on each $D_{\BFi}^s$, $\psi(\BFu,\BFv) = \psi(\mathbf 1-\BFu,\mathbf 1-\BFv)$ except on a set of measure 0. The result then follows from integration by substitution.
\end{proof}

\hchristiane{added}
The next result gives a necessary and sufficient condition for a digitally scrambled point set to be NLOD/NUOD. The condition is based on the c.q.e.\ concept introduced in Definition \ref{DefCqe}, which holds when all $C_b(\BFk;P_n)$ values are no larger than 1.

\begin{thm}
\label{cor:NLODiffCle1}
Let $P_n$ be a deterministic point set of size $n$  in $[0,1)^s$ and $b \ge 2$ be an integer. Assume $P_n$ is such that the $j$th coordinate of the points are all distinct. Let ${}_b\tilde{P}_n$ be the sampling scheme obtained by applying a base $b-$digital scramble to $P_n$. Then ${}_b\tilde{P}_n$ is NLOD/NUOD \hchristiane{may need to stick with NLOD if haven't addressed the flip and measure 0 thing at this point} if and only if $P_n$ is c.q.e.
\end{thm}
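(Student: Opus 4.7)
The proof plan is to combine Theorem \ref{thm:C} with Lemma \ref{lem:flipIntPdf} to handle both the NLOD and NUOD conditions simultaneously, and then to use the elementary-interval decomposition of the volume vector to argue both directions of the iff.

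For the easier direction, assume $P_n$ is c.q.e., so that $C_b(\BFk;P_n) \le 1$ for all $\BFk \in \N^s$. From \eqref{eq:HxyasInteg} and Theorem \ref{thm:C}, for any $\BFx,\BFy\in[0,1]^s$ we have
\[
H(\BFx,\BFy;{}_b\tilde{P}_n) = \int_{R(\BFx,\BFy)}\psi(\BFu,\BFv)d\BFu d\BFv = \sum_{\BFk\in\N^s} t_\BFk\, C_b(\BFk;P_n) \le \sum_{\BFk\in\N^s} t_\BFk = \vol(R(\BFx,\BFy)) = \prod_{j=1}^s x_j y_j,
\]
since $t_\BFk \ge 0$. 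This gives the NLOD property directly. Applying Lemma \ref{lem:flipIntPdf}, the integral of $\psi$ over the ``upper'' box $[\mathbf 1-\BFx,\mathbf 1)\times[\mathbf 1-\BFy,\mathbf 1)$ equals the integral over $R(\BFx,\BFy)$, so the same bound also yields NUOD (after replacing $\BFx,\BFy$ with $\mathbf 1-\BFx,\mathbf 1-\BFy$ and noting $\vol$ is unchanged).

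For the converse, I proceed by contrapositive exactly as in the corollary just proved: if $P_n$ is not c.q.e., pick $\BFk^\ast \in \N^s$ with $C_b(\BFk^\ast;P_n) > 1$, and set $\BFx=\BFy=(b^{-k_1^\ast},\ldots,b^{-k_s^\ast})$. By Lemma \ref{SmallHilbertSpaceLemma}, $V^s(\BFx,\BFy) = b^{-2|\BFk^\ast|} S^{\BFk^\ast}\xi^s$, and by Lemma \ref{lem:hatpsiCk}, $\widehat\psi(S^{\BFk^\ast}\xi^s) = C_b(\BFk^\ast;P_n)$. Hence
\[
H(\BFx,\BFy;{}_b\tilde{P}_n) = \widehat\psi(V^s(\BFx,\BFy)) = b^{-2|\BFk^\ast|} C_b(\BFk^\ast;P_n) > b^{-2|\BFk^\ast|} = \prod_{j=1}^s x_j y_j,
\]
violating NLOD. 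Applying Lemma \ref{lem:flipIntPdf} again to the reflected box $[\mathbf 1-\BFx,\mathbf 1)\times[\mathbf 1-\BFy,\mathbf 1)$ produces the analogous failure of NUOD. In both cases the witness $\BFx,\BFy$ is provided concretely by the same elementary $\BFk^\ast$-interval anchored at the origin (respectively at $\mathbf 1$), which matches the geometry encoded by the $S^\BFk\xi^s$ basis vectors that drive Theorem \ref{thm:C}.

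There is no real obstacle here: all of the heavy lifting has been done in Theorem \ref{thm:C} (which packages the decomposition into normalized elementary-interval vectors with nonnegative coefficients summing to the volume), Lemma \ref{lem:hatpsiCk} (which identifies $\widehat\psi$ on each $S^\BFk\xi^s$ with $C_b(\BFk;P_n)$), and Lemma \ref{lem:flipIntPdf} (which collapses the NLOD/NUOD distinction for base $b$-digitally scrambled sets). The only mild care needed is to verify that the specific witness $\BFx=\BFy=(b^{-k_1^\ast},\ldots,b^{-k_s^\ast})$ lies in $[0,1]^s$ and that the resulting volume is strictly positive, both of which are immediate.
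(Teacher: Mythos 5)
Your proposal is correct and follows essentially the same route as the paper: the ``if'' direction via the decomposition of Theorem \ref{thm:C} with nonnegative coefficients $t_\BFk$ summing to $\vol(R(\BFx,\BFy))$, the ``only if'' direction via the witness $\BFx=\BFy=(b^{-k_1},\ldots,b^{-k_s})$ combined with Lemmas \ref{SmallHilbertSpaceLemma} and \ref{lem:hatpsiCk}, and Lemma \ref{lem:flipIntPdf} to collapse the NLOD/NUOD distinction. No gaps.
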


\begin{proof}
First, 
Lemma \ref{lem:flipIntPdf} 
implies that the NLOD and NUOD properties are equivalent for point sets that have been randomized using a base $b-$digital scramble. \hjaspar{for digitially scrambled point sets.}. So we proceed to show that ${}_b\tilde{P}_n$ is NLOD if and only if $P_n$ is c.q.e.

For the ``if'' part, from Theorem \ref{thm:C}, we need to show that if $P_n$
is c.q.e.\ then
\begin{equation}
\label{eq:ineqCkiff}
\sum_{\BFk \in \N^s} t_\BFk C_b(\BFk;P_n) \le \vol(R(\BFx,\BFy))
\end{equation}
for all $\BFx,\BFy \in [0,1]^s$, recalling that the $t_{\BFk}$'s are non-negative and satisfy $\sum_{\BFk \in \N^s} t_\BFk = \vol(R(\BFx,\BFy))$.
Clearly, if $C_{b}(\BFk;P_n) \le 1$ for all $\BFk \in \N^s$ then \eqref{eq:ineqCkiff} holds.

For the ``only if'' part: assume there exists a $\BFk \in \N^s$ for which $C_b(\BFk;P_n) > 1$, and let $\BFx=\BFy=(b^{-k_1},\ldots,b^{-k_s})$. Then \[
\int_{R(\BFx,\BFy)} \psi(\BFu,\BFv)d\BFu d\BFv =\frac{1}{b^{2{|\BFk|}}} \widehat{\psi}(S^\BFk\xi^s)
=\frac{1}{b^{2{|\BFk|}}} C_b(\BFk;P_n) > \frac{1}{b^{2{|\BFk|}}} = {\rm Vol}(R(\BFx,\BFy)),
\]
where the first and second equality come from Lemma \ref{SmallHilbertSpaceLemma} and Lemma \ref{lem:hatpsiCk}, respectively. Hence ${}_b\tilde{P}_n$ is not NLOD.  
\end{proof}

We are now ready to present one of the main results of this paper.
\hchristiane{Might be able to remove digital: just need to show that if $t>0$ the $M_b(\BFk;P_n)$ will be too large for some $\BFk$}
\hjaspar{is this still the main result of the paper or is it now one of them?}
\hchristiane{Why are we not using the usual assumption, i.e, asking for $(0,m,1)$-net for the projections of $P_n$?}\hjaspar{I don't know}
\begin{thm}\label{MainTheorem}
Let ${}_b\tilde{P}_n =\{\BFU_1,\ldots,\BFU_n\}$ be a scrambled digital $(t,m,s)$-net in base $b$, with $P_n$ such that its one-dimensional projections are digital $(0,m,1)$-nets. Then ${}_b\tilde{P}_n$ is an NUOD/NLOD sampling scheme if and only if $t=0$.
\end{thm}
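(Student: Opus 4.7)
The plan is to combine the two key results already established in the paper. By Theorem \ref{cor:NLODiffCle1}, the sampling scheme ${}_b\tilde{P}_n$ is NLOD (equivalently NUOD, given Lemma \ref{lem:flipIntPdf}) if and only if $P_n$ is completely quasi-equidistributed in base $b$, meaning $C_b(\BFk;P_n) \le 1$ for every $\BFk \in \N^s$. So the entire task reduces to showing that this condition on the $C_b$-values is equivalent to $t=0$.

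For the direction $t=0 \Rightarrow$ c.q.e., I would argue as follows. Since $t=0$, the net $P_n$ is a $(0,m,s)$-net in base $b$, so it is $\BFk$-equidistributed for every $\BFk$ with $|\BFk| \le m$. Lemma \ref{lem:formCkiffequid} then yields
\[
C_b(\BFk;P_n) = \frac{b^m - b^{|\BFk|}}{b^m - 1} \le 1
\]
in this range, because $b^{|\BFk|} \ge 1$. For $\BFk$ with $|\BFk| > m$, I would pick any $j$ with $k_j \ge 1$ and set $\BFk' = \BFk - \BFe_j$; iterating the reduction until $|\BFk'|=m$, each elementary $\BFk'$-interval contains exactly one point, so the finer $\BFk$-intervals contain at most one point each. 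This forces $M_b(\BFk;P_n)=0$ and hence $C_b(\BFk;P_n)=0$. Combining both ranges gives $P_n$ c.q.e.

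For the converse $t>0 \Rightarrow$ not c.q.e., the hypothesis that the one-dimensional projections are $(0,m,1)$-nets places us in exactly the setting of Proposition \ref{prop:Ckt}. By the minimality convention on $t$, the net $P_n$ fails to be a $(t-1,m,s)$-net, so there exists $\BFk \in \N^s$ with $|\BFk| = m-t+1 \le m$ for which $P_n$ is not $\BFk$-equidistributed, equivalently $r(\BFk) < |\BFk|$. The computation inside the proof of Proposition \ref{prop:Ckt} then gives
\[
C_b(\BFk;P_n) \ge \frac{b^{m+1} - b^{|\BFk|}}{b^m-1} > 1,
\]
since $b^m(b-1) \ge b^m > b^{|\BFk|} - 1$. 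Hence $P_n$ is not c.q.e.

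The genuine difficulty of this statement was absorbed into Theorem \ref{cor:NLODiffCle1} and Proposition \ref{prop:Ckt}, so the proof here is essentially a two-step assembly. The only subtlety I would be careful about is the regime $|\BFk| > m$ in the easy direction, which the refinement observation above handles using the one-dimensional projection hypothesis (and in fact only uses that $P_n$ is a $(0,m,s)$-net).
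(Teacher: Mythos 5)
Your proof is correct and follows essentially the same route as the paper: reduce to the c.q.e.\ condition via Theorem \ref{cor:NLODiffCle1} (with Lemma \ref{lem:flipIntPdf} giving NLOD $\Leftrightarrow$ NUOD), then show c.q.e.\ $\Leftrightarrow t=0$ using the content of Proposition \ref{prop:Ckt}, handling $|\BFk|>m$ by noting $M_b(\BFk;P_n)=0$ there. The only cosmetic difference is that you re-derive the two halves of Proposition \ref{prop:Ckt} (via Lemma \ref{lem:formCkiffequid} and the rank computation) rather than citing it wholesale, which the paper does.
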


\hchristiane{October 22: Could shorten the proof by just referring to proposition \ref{prop:Ckt}}
\begin{proof}
As mentioned in the proof of Theorem \ref{cor:NLODiffCle1}, Lemma \ref{lem:flipIntPdf} can be used to show that the NLOD and NUOD properties are equivalent for a base $b-$digitally scrambled point set.


Next, using Theorem \ref{cor:NLODiffCle1}, it is sufficient to prove that $P_n$ is c.q.e.\ in base $b$ if and only if $t=0$. In turn, to prove the latter we use Proposition \ref{prop:Ckt}, which establishes that if $t=0$, then $C_b(\BFk;P_n) \le 1$ when ${|\BFk|} \le m$; if ${|\BFk|} > m$, then $M_b(\BFk;P_n) =0$ by  Lemma \ref{lem3p2}  so that $C_b(\BFk;P_n)=0$ as well. Proposition \ref{prop:Ckt} also establishes that if $t>0$, then there exists a $\BFk$ with ${|\BFk|} \le m$ such that $C_b(\BFk;P_n) > 1$, and therefore $P_n$ is not c.q.e.
\end{proof}

\hchristiane{Changed} Another main result is provided in the next theorem. It follows directly from applying Proposition \ref{prop:0scqe} and  Theorem \ref{cor:NLODiffCle1}. The assumption that the $j$th coordinate of the points of $P_n$ are distinct follows from the fact that any one-dimensional projection of the first $b^{\ell}$ points of a $(0,s)-$sequence is a $(0,\ell,1)-$net,  so we do not need to include this as part of our assumptions.

\begin{thm}
\label{thm:0sNLOD}
Let $P_n$ be the first $n$ points of a $(0,s)-$sequence in base $b$ and let ${}_b\tilde{P}_n$ be the point set obtained after applying a base $b-$digital scramble to $P_n$. Then ${}_b\tilde{P}_n$  is NLOD/NUOD.
\end{thm}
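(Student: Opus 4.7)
The plan is to combine two results already established in the paper: Proposition \ref{prop:0scqe}, which shows that the first $n$ points of a $(0,s)$-sequence in base $b$ form a completely quasi-equidistributed (c.q.e.) point set in base $b$, and Theorem \ref{cor:NLODiffCle1}, which states that a base $b-$digitally scrambled point set ${}_b\tilde{P}_n$ is NLOD/NUOD if and only if the underlying $P_n$ is c.q.e. (provided each one-dimensional projection consists of $n$ distinct values).

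First I would verify that the hypothesis of Theorem \ref{cor:NLODiffCle1} concerning distinct $j$th coordinates is satisfied. Write $n = j b^\ell + r$ with $0 \le r < b^\ell$ for a suitably large $\ell$ (one can take $\ell$ such that $b^\ell \ge n$). By the defining property of a $(0,s)$-sequence, any block of $b^\ell$ consecutive points of the sequence forms a $(0,\ell,s)$-net in base $b$. In particular the first $b^\ell$ points of the sequence contain $P_n$, and a $(0,\ell,s)$-net in base $b$ is $(k_1,\ldots,k_s)$-equidistributed for every $\BFk$ with $|\BFk| \le \ell$; applying this with the one-nonzero-coordinate vectors $\BFk = \ell \, \BFe_j$ shows that the $j$th coordinates of these $b^\ell$ points lie in distinct elementary intervals of length $b^{-\ell}$, hence are pairwise distinct. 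Restricting to the first $n \le b^\ell$ points preserves this property for each $j = 1, \ldots, s$.

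Next, by Proposition \ref{prop:0scqe}, $P_n$ is c.q.e.\ in base $b$, i.e., $C_b(\BFk;P_n) \le 1$ for all $\BFk \in \N^s$. Applying Theorem \ref{cor:NLODiffCle1} to $P_n$ then yields that ${}_b\tilde{P}_n$ is NLOD, and the equivalence of the NLOD and NUOD properties for base $b-$digitally scrambled point sets (established in the proof of Theorem \ref{cor:NLODiffCle1} via Lemma \ref{lem:flipIntPdf}) gives the NUOD conclusion as well.

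This proof requires no new calculation — the work was done in Propositions and Theorems cited above. The only subtlety worth being careful about is the hypothesis of distinct one-dimensional coordinates, which is why I would devote one paragraph to checking it explicitly rather than simply invoking the two results; it is the implicit bridge that makes the invocation of Theorem \ref{cor:NLODiffCle1} legitimate.
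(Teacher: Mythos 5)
Your proposal is correct and follows exactly the paper's route: the paper likewise obtains this theorem by combining Proposition \ref{prop:0scqe} with Theorem \ref{cor:NLODiffCle1}, and disposes of the distinct-coordinates hypothesis by noting that the one-dimensional projections of the first $b^{\ell}$ points of a $(0,s)$-sequence form $(0,\ell,1)$-nets. Your extra paragraph just spells out that last observation in more detail than the paper does.
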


To end this section, we provide a result that can be directly derived from the previous theorem and the discussion in \cite{Lem17} about the class of functions for which an NUOD/NLOD sampling scheme provides an estimator with variance no larger than a Monte Carlo estimator. The interest of such a result is that it holds for any number of points rather than being given as an asymptotic bound. First, we need to introduce the following definition.

\begin{defi}
Consider a function $f:[0,1]^s \rightarrow \mathbb{R}$, and an interval of the form $A = [{\bf a},{\bf b}] = \prod_{j=1}^s [a_j,b_j] \subseteq [0,1]^s$, with $0 \le a_j \le b_j \le 1, j=1,\ldots,s$.  Let the dimension $d$ of $A$ be defined as $d=\sum_{j=1}^s {\bf 1}_{a_j<b_j}$. Let
 \[
\Delta^{(s)}(f;A)  = \sum_{{\cal I} \subseteq \{1,\ldots,s\}} (-1)^{|{\cal I}|} f(\BFa^{{\cal I}};\BFb^{-{\cal I}}),
\]
where $f(\BFa^{{\cal I}};\BFb^{-{\cal I}})$ is the function $f$ evaluated at $\BFx$ with $x_j=a_j$ if $j \in {\cal I}$ and $x_j=b_j$ if $j \notin {\cal I}$.
If  $\Delta^{(s)}(f;A) \ge 0$ for all $A$ 
of dimension $1 \le d \le s$, then $f$ is said to be {\em quasi-monotone} or {\em completely monotone}. 
\end{defi}

\begin{cor}
Let $f$ be a bounded function such that either $f$ or $-f$ is quasi-monotone. Let $P_n$ be the first $n$ points of a $(0,s)-$sequence in base $b$ and let ${}_b\tilde{P}_n$ be the point set obtained after applying a base $b-$digital scramble to $P_n$. Let $\mu_n$ be the estimator for $\mu(f)$ based on ${}_b\tilde{P}_n$ and $\hat{\mu}_{mc,n}$ the one based on a Monte Carlo estimator with $n$ points. Then ${\rm Var}(\hat{\mu}_n) \le {\rm Var}(\hat{\mu}_{mc,n})$
\end{cor}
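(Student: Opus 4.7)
The plan is to reduce the claim to two ingredients we already have: (i) the scrambled point set ${}_b\tilde{P}_n$ is NUOD/NLOD, and (ii) for an NUOD sampling scheme, a quasi-monotone integrand produces a non-positive covariance $\sigma_{I,J}$, so the decomposition ${\rm Var}(\hat{\mu}_n) = \sigma^2/n + \frac{n-1}{n}\sigma_{I,J}$ immediately gives the Monte Carlo comparison.

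First I would invoke Theorem \ref{thm:0sNLOD} on $P_n$, the first $n$ points of a $(0,s)$-sequence in base $b$: the theorem asserts that ${}_b\tilde{P}_n$ is simultaneously NLOD and NUOD. Next, I would write the variance decomposition recalled right after \eqref{eq:covaspdf},
\[
{\rm Var}(\hat{\mu}_n) = \frac{\sigma^2}{n} + \frac{n-1}{n}\,\sigma_{I,J},
\qquad \sigma_{I,J} = {\rm Cov}(f(\BFU_I),f(\BFU_J)),
\]
and note that ${\rm Var}(\hat{\mu}_{mc,n}) = \sigma^2/n$. It therefore suffices to show $\sigma_{I,J} \le 0$.

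For this I would appeal to the representation of $\sigma_{I,J}$ developed in \cite{Lem17}: expressing $f(\BFx)-f(\BFy)$ via multivariate integration by parts (Hoeffding-style), the covariance can be written as an integral over $[0,1]^{2s}$ of the product of a factor coming from $\Delta^{(s)}(f;\cdot)$ and a factor of the form $T(\BFx,\BFy;\tilde{P}_n) - \prod_{\ell} (1-x_\ell)(1-y_\ell)$. If $f$ is quasi-monotone, then $\Delta^{(s)}(f;A) \ge 0$ on every subinterval $A$, while the NUOD property of ${}_b\tilde{P}_n$ makes the second factor non-positive; hence the product is non-positive almost everywhere and therefore $\sigma_{I,J}\le 0$. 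The case where $-f$ is quasi-monotone follows by replacing $f$ with $-f$: since ${\rm Var}(\hat{\mu}_n)$ and ${\rm Var}(\hat{\mu}_{mc,n})$ are both invariant under $f\mapsto -f$, we still obtain ${\rm Var}(\hat{\mu}_n) \le {\rm Var}(\hat{\mu}_{mc,n})$.

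The result then follows by combining $\sigma_{I,J}\le 0$ with the variance decomposition. The only real obstacle is making sure the signs align in the integration-by-parts representation: depending on which corner the boxes are anchored to, one needs either NLOD or NUOD, and one needs $f$ or $-f$ to be quasi-monotone. Since both orthant properties are available here (by Lemma \ref{lem:flipIntPdf} applied through Theorem \ref{cor:NLODiffCle1}), either choice of monotonicity for $\pm f$ works, and the argument goes through uniformly. All other steps are either direct citations of Theorem \ref{thm:0sNLOD} and the variance identity, or bookkeeping; no new calculation is required.
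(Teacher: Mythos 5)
Your proposal is correct and follows essentially the same route as the paper: both reduce the claim to Theorem \ref{thm:0sNLOD} (which gives the NUOD property of ${}_b\tilde{P}_n$) combined with the covariance representation from \cite{Lem17} (Proposition 3 and Remark 8 there), whose internals you simply unpack rather than cite as a black box. Your handling of the $-f$ case via invariance of the variances under $f \mapsto -f$ matches the role of Remark 8 in \cite{Lem17}, so no gap remains.
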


\begin{proof}
It suffices to apply Theorem \ref{thm:0sNLOD} together with Proposition 3 from \cite{Lem17}, which says that when $f$ has a certain integral representation (see (15) in \cite{Lem17}) and $P_n$ is an NUOD sampling scheme, then the corresponding estimator has a variance no larger than a Monte Carlo estimator based on the same sample size. As mentioned in Remark 8 of \cite{Lem17}, if $f$ is bounded and either $f$ or $-f$ is quasi-monotone, then the conditions on $f$ required to apply Proposition 3 from \cite{Lem17} hold. 
\end{proof}

\hchristiane{add a result to state explicitly that the first $n$ pts of a $(0,s)-$sequence do better than MC on quasi-monotone functions?}
\hjaspar{Yes. And a remark that quasi-monotnone should really be defined as quasi-increasing. Lemma 4.17 Which I will eventually prove that any choice of orthant works, gives the quasi-monotone version of monotone rather than increasing.}

\section{Using dependence measures to assess the quality of point sets}
\label{sec:qual}

In this section, we 
highlight the potential of the quantities $C_b(\BFk;P_n)$ defined in Section \ref{sec:nlod} to be used as a flexible and informative new tool for assessing the quality of any point set, and to further our understanding of how scrambling can help improve the quality of a point set. We also note that these quantities contain information on more traditional concepts such as equidistribution in base $b$ and the $t$ parameter, as demonstrated in Lemma \ref{lem:formCkiffequid} and Proposition \ref{prop:Ckt}, but in addition they quantify the lack of equidistribution rather than simply determining if equidistribution holds or not. 

More precisely, we see three promising avenues for using these quantities  to further our understanding of low-discrepancy point sets and of the effect of scrambling. First, they can be used to predict whether or not scrambling will yield good randomized quasi-Monte Carlo estimators. Namely, the results in the previous sections show that if $C_b(\BFk;P_n) \le 1$ for all $\BFk$ then scrambling $P_n$ {\em in base $b$} will induce negative dependence, which should help reduce the variance compared to Monte Carlo sampling. We emphasize that scrambling can be performed in a base $\tilde{b}$, not necessarily equal to the base $b$ used to construct $P_n$. In particular, if $\max_{\BFk} C_b(\BFk;P_n) >1$ but $\max_{\BFk} C_{\tilde{b}}(\BFk;P_n) \le 1$,
one should consider scrambling in base $\tilde{b}$. Second, the $C_b(\BFk;P_n)$ values can be used to compare the quality of different point sets $P_n$, and could therefore be used to choose parameters for constructing $P_n$ by finding the ones that minimize a certain criterion defined by the  $C_b(\BFk;P_n)$. Indeed, a point set with smaller $C_b(\BFk;P_n)$ values not only has better equidistribution properties (before scrambling) but will result in a scrambled point set that is in some sense more negatively dependent, which should in turn result in better estimators. Third, the $C_b(\BFk;P_n)$ values allow us to compare the equidistribution properties of point sets constructed in different bases, and whose number of points is not necessarily a power of $b$.  This can in turn be used to provide key insight about why a point set seemingly better than another (say with a smaller $t$, but in a larger base) ends up not performing so well when used for integration problems.
These different avenues for further research regarding the use of the $C_b(\BFk;P_n)$ values are explored in the rest of this section using three different setups, which we now describe.


We first consider two different two-dimensional projections of a net in base 2 that are of bad quality both visually and in terms of their $t$ parameter. Both projections have $n=1024$ points and are based on a Sobol' sequence with direction numbers all set to 1. The one on the top row of Figure \ref{fig:sob} is obtained by taking the projection of that sequence over coordinates (27,28) and the one on the bottom row is obtained by  taking the projection over coordinates (22,23). \hchristiane{Ref 1 suggests to compare point sets in 3rd column of Figure 1 with MC: not sure I agree}\hjaspar{I like the idea of adding a picture of an MC point set with the same number of points. Problem is that there is no place to put it. I'll leave it to you to decide.}

\begin{figure}[htb]
\caption{Two different $(t,m,2)-$nets in base 2 with $m=10$; the middle column shows the point sets after scrambling in base 2; the right column shows the point sets after scrambling in base 53.}
\label{fig:sob}
\centering
 \begin{minipage}{0.3\textwidth}
    \setlength{\unitlength}{2.5cm}
    \begin{center}
  \includegraphics[scale =0.25]{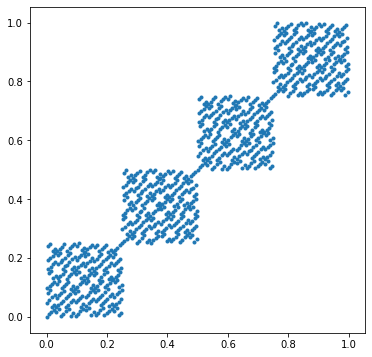}
  \end{center}
  \end{minipage}
\begin{minipage}{0.3\textwidth}
    \setlength{\unitlength}{2.5cm}
    \begin{center} 
\includegraphics[scale=0.25]{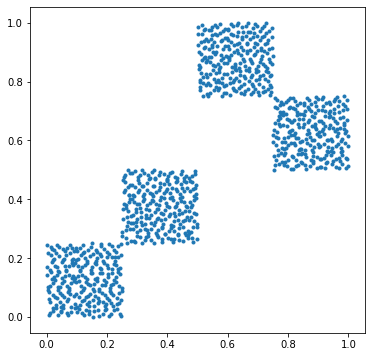}
\end{center}
\end{minipage}
\begin{minipage}{0.3\textwidth}
    \setlength{\unitlength}{2.5cm}
    \begin{center} 
\includegraphics[scale=0.25]{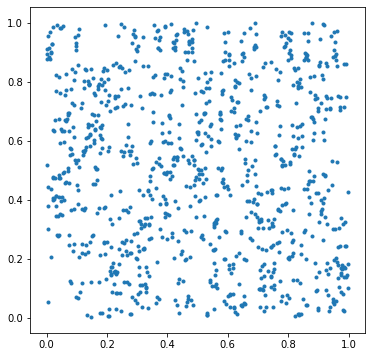}
\end{center}
\end{minipage}
 \begin{minipage}{0.3\textwidth}
    \setlength{\unitlength}{2.5cm}
    \begin{center}
  \includegraphics[scale =0.25]{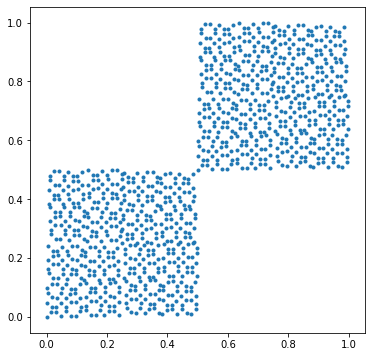}
  \end{center}
  \end{minipage}
\begin{minipage}{0.3\textwidth}
    \setlength{\unitlength}{2.5cm}
    \begin{center} 
\includegraphics[scale=0.25]{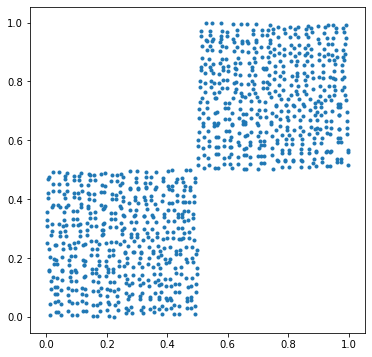}
\end{center}
\end{minipage}
\begin{minipage}{0.3\textwidth}
    \setlength{\unitlength}{2.5cm}
    \begin{center} 
\includegraphics[scale=0.25]{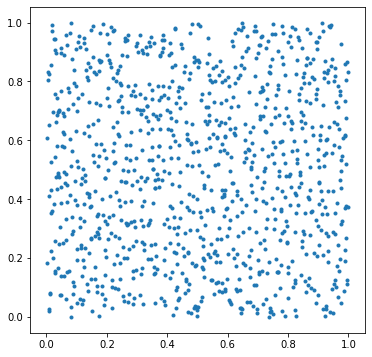}
\end{center}
\end{minipage}
\end{figure}

Table \ref{tab:Sobolb2} gives the value of $\beta_{b,k} = \max_{\BFk:|\BFk| = k} C_b(\BFk;P_n)$ for $k \ge 1$, for $b=2$. It also gives the maximum value $C_b = \max_{k \ge 1} \beta_{b,k}$, again for $b=2$.

\begin{table}[htb]
\caption{Values of $\beta_{b,k}$ and $C_b$ for $b=2$ for nets from left column of Figure \ref{fig:sob}.}
\label{tab:Sobolb2}
\begin{center}
\begin{tabular}{| l | lllllll | l |}
\hline
 $k$ & 1 & 2 & 3 & 4 & 5 & 6  & 7  \\ 
 \hline
1st pt set (top) & 1.00 & 2.00 & 1.99 & 3.99 & 3.97 & 3.94 & 3.88 \\
 2nd pt set (bottom) & 1.00 & 2.00 & 1.99 & 1.99 & 1.97 & 1.94 & 1.88 \\
 \hline
 $k$ & 8 & 9 & 10 & 11 & 12 & 13 & $C_b$ \\
 \hline
1st pt set & 3.75 & 3.50 & 3.00 & 6.01 & 4.00 &  8.01 & {\bf 8.01}  \\
 2nd pt set &1.75 &3.50 & 3.00 &6.01 &4.00 & 8.01 &{\bf 8.01}  \\ 
\hline
\end{tabular}
\end{center}
\end{table}

\begin{table}
\caption{Values of $\beta_{b,k}$ and $C_b$ for $b=53$ for nets from left column of Figure \ref{fig:sob}}
\label{tab:Sobolb53}
\begin{center}
\begin{tabular}{| l |ll | l |}
\hline
$k$ & 1 & 2 & $C_b$ \\
\hline
1st pt set & 0.95 &  2.87 & {\bf 2.87}   \\
2nd pt set & 0.95 &   0.76 & {\bf 0.95} \\
\hline
\end{tabular}
\end{center}
\end{table}

First, from Proposition \ref{prop:Ckt} we can compute
\hchristiane{seems ok because here we use $\beta_{b,k}$ but may want to update based on what we use in Prop. 4.6}
$
t = 
 m- \max\{k: \beta_{b,k} \le 1, 1 \le k \le m\}.
$
Hence from Table \ref{tab:Sobolb2} we see that in both cases, $t=9$. However, the $\beta_{b,k}$ values of the first point set are always at least as large as those for the second point set.  Corroborating this observation, we observe in Figure \ref{fig:sob} that while both point sets have large regions with no points, the design in the first point set (top left) appears to be worse than for the second one (bottom left), as we see larger contiguous empty boxes and the points are packed into a smaller region along the diagonal.

\hchristiane{Explain somewhere why $k \ge 2$ in Table 2, i.e., because $1024 \le 53^2$. I'm not updating for now in case we change some of our examples in this section.}
The plots in the centre of Figure \ref{fig:sob} show the point sets after being scrambled in base 2, using the nested uniform scrambling method of Owen \cite{vOWE95a}. Visually, we see that scrambling does not fix the issues of the deterministic point sets on the left. This is consistent with the fact that scrambling does not change the $C_b(\BFk;P_n)$ values, so if they are large in a given base, scrambling in that base will not address the lack of equidistribution. 
However when measuring $C_b(\BFk;P_n)$ in a base other than that used to construct $P_n$, 
if we find they are small (close to 1), it suggests that scrambling in that base could improve the equidistribution.  To illustrate this, we performed a base 53 scramble of the two point sets, with the resulting point sets shown on the right column of Figure \ref{fig:sob}. Visually, both point sets appear much better equidistributed after this base 53 scrambling. Note that in this case  there is no parameter $t$ that can be computed to assess the quality of ${}_{53}\tilde{P}_{n}$,  as $n=1024$ is not a power of $b$. But the $C_{53}(\BFk;P_n)$ values can be computed and are shown in Table \ref{tab:Sobolb53}. They respectively yield  a maximum $C_{53}$ of 2.87 and 0.95 for the two point sets. Hence the second scrambled point set is c.q.e.\ in base 53. Even for the first point set, $C_{53}$ is much smaller than $C_2$. Note that Table \ref{tab:Sobolb53} only reports $\beta_{b,k}$ for $k\le 2$ because $\beta_{b,k}=0$ for $k \ge 3$ for both point sets, which means that any vector $\BFk$ with $|\BFk| \ge 3$ yields $53^{|\BFk|}$ elementary intervals of size $b^{-|\BFk|}$ with either 0 or 1 point. 

This experiment shows that scrambling base 2 point sets in a larger base can be used to fix bad projections that are not repaired by the base 2 scrambling, an idea mentioned in our first avenue for exploration introduced at the beginning of this section. 
Table \ref{tab:fctc} shows the estimated variance of estimators based on these point sets for a simple integration problem, using these two different bases for the scrambling. The results confirm that the $C_b(\BFk;P_n)$ values can help predicting how successful scrambling will be at reducing the variance compared to Monte Carlo. 
\hchristiane{Following sentence added}\hjaspar{okay}

Next, we consider $(0,s)$-sequences in a prime base $b$, such as those proposed by Faure \cite{rFAU82a}. Since these sequences require $b \ge s$, in large dimensions we must work with large bases. Hence it is typical to use a number of points $n$ that is not a power of $b$. For this reason, we want to make sure the construction used is such that the first $n$ points are uniformly distributed, for any value of $n$. As discussed in, e.g.,  \cite{qLEM09a}, when working with the original Faure sequences, there can be some unwanted behavior for smaller values of $n$, i.e., smaller than $b^d$ where $d$ is the dimension of the space (or projection) considered. It is possible to construct $(0,s)$-sequences with better properties by carefully choosing deterministic scrambling matrices (often referred to as generalized Faure sequences), but it can be challenging to quantify what we mean by ``better'' since $t=0$ by definition for all these sequences, and we also know from Proposition \ref{prop:0scqe} that their first $n$ points form point sets that are all c.q.e.\ in base $b$. This is where our values $C_b(\BFk;P_n)$ can help. Figure \ref{fig:fau} shows different point sets obtained from $(0,2)$-sequences in base 53.

\hchristiane{Ref 2 is asking if distribution of scramble of Faure is same as scramble of GFaure. Could clarify this also this is not really the point of this picture.}

\begin{figure}[htb]
\caption{1024 first points of $(0,2)$-sequences taken from 49th and 50th coordinate of the following construction: original Faure sequence in base 53 (top left); generalized Faure (GFaure) sequence in base 53 obtained by randomly choosing nonsingular lower triangular matrices and multiplying them with original Faure sequence matrices (bottom left);  the middle column shows the point sets after a nested uniform scrambling in base 53; the right column shows the point sets after a nested uniform scrambling in base 2.}
\label{fig:fau}
\centering
 \begin{minipage}{0.3\textwidth}
    \setlength{\unitlength}{2.5cm}
    \begin{center}
  \includegraphics[scale =0.25]{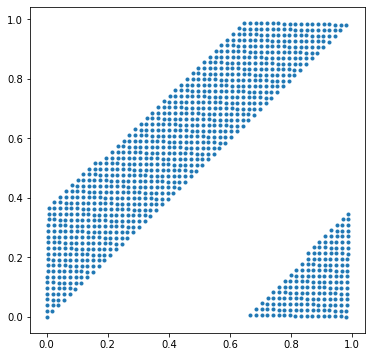}
  \end{center}
  \end{minipage}
\begin{minipage}{0.3\textwidth}
    \setlength{\unitlength}{2.5cm}
    \begin{center} 
\includegraphics[scale=0.25]{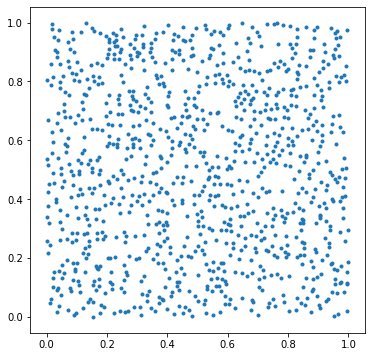}
\end{center}
\end{minipage}
\begin{minipage}{0.3\textwidth}
    \setlength{\unitlength}{2.5cm}
    \begin{center} 
\includegraphics[scale=0.25]{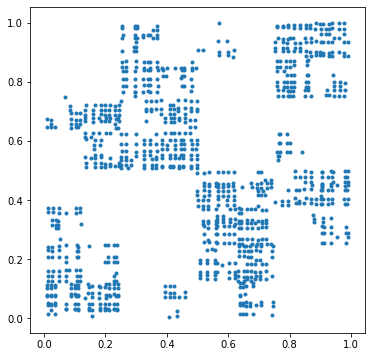}
\end{center}
\end{minipage}
\begin{minipage}{0.3\textwidth}
    \setlength{\unitlength}{2.5cm}
    \begin{center} 
\includegraphics[scale=0.25]{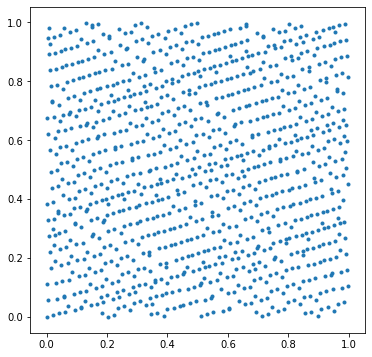}
\end{center}
\end{minipage}
 \begin{minipage}{0.3\textwidth}
    \setlength{\unitlength}{2.5cm}
    \begin{center}
  \includegraphics[scale =0.25]{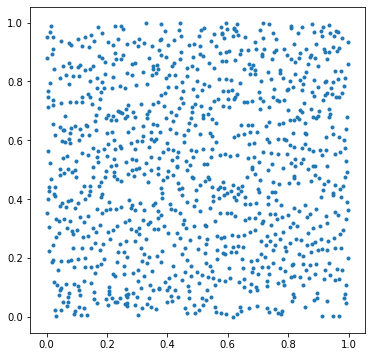}
  \end{center}
  \end{minipage}
  \begin{minipage}{0.3\textwidth}
    \setlength{\unitlength}{2.5cm}
    \begin{center} 
\includegraphics[scale=0.25]{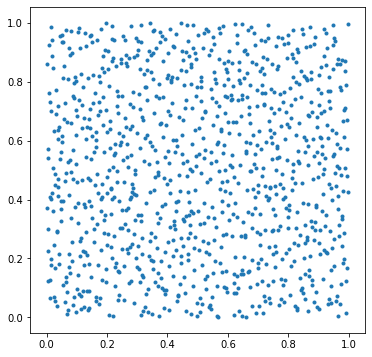}
\end{center}
\end{minipage}
\end{figure}

Since the point sets in the left column both come from the first $n=1024$ points of a $(0,2)$-sequence in base 53, they have the same values of $C_{53}(\BFk;P_n)$, namely  $\beta_{53,k}= 0.95$ for $k=1$ and is 0 otherwise. We can interpret this as follows: both point sets should have similarly  good uniformity properties after being scrambled in base 53 \hchristiane{following added} since they have the same joint pdf after being scrambled in base 53.  This is confirmed by the two figures in the middle column being very similar although the base 53 digital scrambling was applied to point sets (left side) that appear very different, the top one having much less desirable uniformity than the bottom one.
In other words, since both point sets in the middle column appear very uniform we can conclude (as expected from $\beta_{53,k}$) that both point sets are nicely distributed with respect to base 53.

In order to detect the difference between the two point sets in the left column, we compute the $C_2(\BFk;P_n)$ values for both. The motivation for doing this as follows: as seen in the right column of Figure \ref{fig:fau} and the middle column of Figure \ref{fig:sob}, a base 2 digital scrambling does not seem to address issues in a badly designed point set. This suggests that  scrambling in base 2 can only produce a uniform point set if the point set being scrambled is already uniform with respect to that base, and not only with respect to base 53. This is precisely what the $C_2(\BFk;P_n)$ can detect. 

Since the $C_2(\BFk; P_n)$ values capture the dependence structure of the base 2 scrambling of $P_n$ and we see that the two point sets look very different from each other after scrambling in base 2 (right column), those values should detect the difference between the point sets on the left. In other words, since the upper right point set  is not uniform even though a base 2 scrambling has been applied,  the upper left point set is not uniformly distributed with respect to base 2, thus the $C_2(\BFk; P_n)$ values for this point set should be larger. Similarly, since the lower right point set looks uniform, the lower left point set is not only uniformly distributed with respect to base 53 but also with respect to base 2. Table 3 shows the $C_2(\BFk, P_n)$ values of both point sets on the left. We see that the $C_2(\BFk;P_n)$ do indeed detect the difference we see visually in the point sets, with the top one giving $C_2 = 16.83$ and the bottom one giving $C_2 = 1.08$. This way of using the $C_2(\BFk, P_n)$ values, possibly in a different base than the one used for constructing $P_n$, illustrates well the potential of the approach mentioned in our second avenue for exploration, regarding the use of these values to choose parameters (in this case, deterministic scrambling matrices for the Faure sequence) for a given type of construction. These observations are further supported by the results in Table \ref{tab:fctc}, which gives the estimated variance of estimators based on these constructions for an integration problem. There we see that after scrambling in base 53, the two point sets yield estimators with approximately equal variance, which is consistent with the fact that their $C_{53}$ values are equal.

\hchristiane{Replaced the missing number by 0 for $k=16$ for Faure}
\begin{table}[htb]
\caption{Values of $\beta_{b,k}$ and $C_b$ for $b=2$ for point sets in left column of Figure \ref{fig:fau}}
\label{tab:C2fau}
{\small{
\begin{center}
\begin{tabular}{| l | llllllll  l |}
\hline
 $k$ & 1 & 2 & 3 & 4 & 5 & 6 & 7 & 8 & 9 \\ 
 \hline
Faure (top)& 1.00 &  1.10 & 1.44 &  1.85 & 2.03 &   2.25 & 2.33 &  2.52 & \!3.83 \\
GFaure (bottom) & 1.00 & 0.98 & 0.99 & 0.99 &  0.98 & 0.96 & 0.91 &  0.84 & \!0.78 \\
 \hline
 $k$ & 10 & 11 & 12 & 13 & 14 & 15 & 16 &   $C_b$ & \\
 \hline
Faure (top)  &5.89 &  7.91 & 11.18 &13.28 & 16.83 & 15.64 & 0     &  {\bf 16.83 } & \\
 GFaure (bottom) & 0.97 & 1.08 &  0.78 &  0.45 &  0.47 & 0.68& 0.63&  {\bf 1.08} &\\ 
 \hline
\end{tabular}
\end{center}}}
\end{table}

\hchristiane{added}
Our third comparison considers the projection over coordinates (16,17) of the first 1024 points of the Sobol' and Faure sequences, the latter being constructed in base 17, and the former based on direction numbers provided in \cite{FauLem17} for the so-called irreducible Sobol'-Nieddereiter sequences. Table \ref{tab:Cbsobvsfau} shows the $C_b(\BFk,P_n)$ values for $b$ equal to 2, 3, and 17.

\begin{figure}[htb]
\caption{1024 first points of a Sobol' (left) and Faure (right) sequence over coordinates (16,17)}
\label{fig:sobvsfau}
\centering
 \begin{minipage}{0.3\textwidth}
    \setlength{\unitlength}{2.5cm}
    \begin{center}
  \includegraphics[scale =0.25]{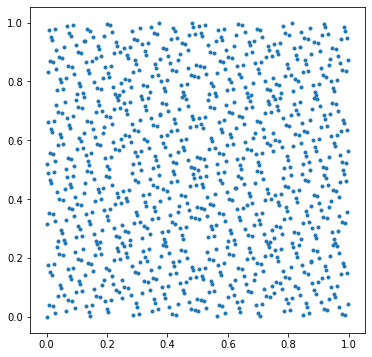}
  \end{center}
  \end{minipage}
\begin{minipage}{0.3\textwidth}
    \setlength{\unitlength}{2.5cm}
    \begin{center} 
\includegraphics[scale=0.25]{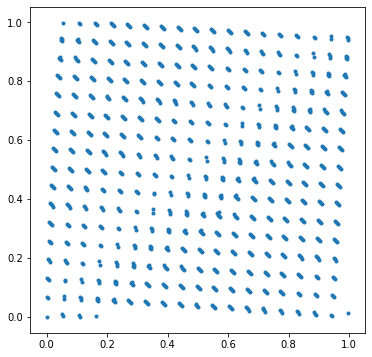}
\end{center}
\end{minipage}
\end{figure}

\begin{table}[htb]
\caption{Values of $\beta_{b,k}$ and $C_b$ for different $b$ for point sets in Figure \ref{fig:sobvsfau}}
\label{tab:Cbsobvsfau}
\begin{center}
\begin{tabular}{| l | llllllll  l |}
\hline
 $k$ & 1 & 2 & 3 & 4 & 5 & 6 & 7 & 8 & 9  \\ 
 \hline
Sobol' $(b=2)$ & 1.00&  1.00 & 0.99 &  0.99 & 0.97 & 0.94&   0.88 &  0.75 & 1.50 \\
Faure $(b=2)$  & 1.00 & 1.00 & 1.01 & 1.01 &  1.01& 1.38 & 1.48 &  2.04 & 2.43\\
Sobol' $(b=3)$ & 0.998 & 0.99 & 0.98 & 0.94 & 0.87 &
       0.74 & 0.56 & 0.18 & 0.08 \\
Faure $(b=3)$ & 1.00 &  1.01 &  1.01 &  1.27 &  1.97 & 
       2.54 &  3.23 &  6.06 &  7.37 \\
 \hline
 $k$ & 10 & 11 & 12 & 13 & 14 &15 & 16 & 17  & $C_b$  \\
 \hline
Sobol' $(b=2)$ & 1.00 & 2.00 & 0& 0&0&0&0&0& {\bf 2.00} \\
Faure $(b=2)$ & 2.76 & 3.09 & 4.93 & 6.54 & 7.29 & 3.82 & 4.00 & 0.75 & {\bf 7.29} \\
Sobol' $(b=3)$   &  0.11 & 0&0&0&0&0&0&0& {\bf 0.998}\\
Faure $(b=3)$   &  3.61 &       7.10 &0&0&0&0&0&0& {\bf 7.37}\\
  \hline
  $k$ & 1 & 2 & 3 & 4 &&&&& $C_{17}$ \\
  \hline
  Sobol' $(b=17)$ & 0.98 &  0.84 &  0.09 & 0.32 &&&&& {\bf 0.98}\\
Faure $(b=17)$ & 0.98 &  0.74 &0&0&&&&& {\bf 0.98} \\
\hline
\end{tabular}
\end{center}
\end{table}

Visually, the Faure sequence looks worse than the Sobol' sequence in Figure \ref{fig:sobvsfau}. The Sobol' point set has $t=2$ and is noticeably better than the point sets from Figure \ref{fig:sob}. We also see that it is c.q.e.\ in base 3 with $C_3=0.9980$, while for the Faure sequence $C_3=7.37$. In base 2, $C_2=2.00$ for Sobol' and $C_2=7.29$ for Faure. In base 17, both constructions have $C_{17} = 0.98$.
In other words, the base 17 equidistribution properties of the two point sets are both good but for base 2 or 3, the Sobol' point set is clearly better. One could argue that the base 2 comparison is not fair, as the Sobol' point set has been constructed in this base while the Faure one has been constructed in base 17. This is why we also included results for $C_3(\BFk;P_n)$, which confirm the superiority of the Sobol' point set for this example. This comparison highlights the potential of the  $C_b(\BFk;P_n)$ values 
to compare point sets constructed in different bases, as per the third avenue of exploration mentioned at the beginning of this section. Table \ref{tab:fctc} contains results showing that indeed the Sobol' point set performs better than the Faure one. 
More precisely, this table gives the estimated variance of  the different point sets considered in this section, with scrambling applied in different bases to obtain an estimator for the function $f(\BFu) = \prod_{j=1}^s (1+c(u_j-0.5))$ from \cite{qSOB03a}. We used 25 independent scramblings to estimate the variance in each case. Because this is a simple two-dimensional example, it would be unwise to draw too many conclusions from these results, but we can nevertheless identify a few patterns, namely that base 2 point sets with large $C_2(\BFk;P_n)$ can be ``repaired'' by a scrambling in a larger base $b$ for which their $C_b$ value is close to 1; scrambling in a base $b$ for which $C_b$ is large leads to estimators that do either worse or not much better than Monte Carlo, and point sets with small $C_b$ values in more than one base tend to yield the best estimators, regardless of the scrambling base.


\begin{table*}[htb]
\caption{Estimated variance based on scrambled point sets from Tables 1, 2 and 3}
\label{tab:fctc}
\begin{center}
\begin{tabular}{| lll |}
\hline
&\multicolumn{2}{l |}{scrambling base\,\,\,\,}\\
\hline
& $b=2$ & $b=53$   \\  
\hline
Sobol' Fig.\ 1-top & 6.10e-7 & 5.84e-9 \\
Sobol' Fig.\ 1-bottom & 6.28e-7 & 8.76e-9\\
\hline
&$b=2$  & $b=53$  \\
\hline
Faure Fig.\ 2 & 2.53e-7 &  4.31e-9\\
GFaure Fig.\ 2 &4.25e-9 & 3.86e-9\\
\hline
& $b=2$ & $b=17$ \\
\hline
Sobol' Fig.\ 3 & 6.28e-13 & 1.75e-9\\
Faure Fig.\ 3 & 8.59e-9 & 1.75e-9\\
\hline
Monte Carlo & \multicolumn{2}{c |}{3.11e-7}  \\
\hline
\end{tabular}
\end{center}
\end{table*}

To conclude this section, the main message we wish to emphasize is that the $C_b(\BFk,P_n)$ values can be very useful to assess the quality of point sets. Namely, a point set that has good overall uniformity properties should be uniformly distributed with respect to more than one base $b$ (i.e., have perfect or near equidistribution with respect to that base), which in turn should translate to small $C_b(\BFk;P_n)$ values for more than one $b$. A point set that does not possess good overall uniformity properties could exhibit small $C_b(\BFk,P_n)$ values for one base $b$, but will produce large $C_b(\BFk;P_n)$ values for some other bases $b$.
In particular, when $n$ is small relatively to $b$, it tends to be easy for $P_n$ to obtain small $C_b(\BFk;P_n)$ values in that base: in that case, a measurement in a smaller base will help detect potential issues. For such point sets, what this suggests is that their deficiencies can be repaired by a scrambling in a larger base that yields small $C_b(\BFk;P_n)$ values. If one is instead looking for a construction that does not need scrambling in order to be ``repaired'', then the $C_b(\BFk;P_n)$ values for small $b$
 can be used to choose a good-quality point set. This is illustrated in our example with $(0,s)$-sequences: as shown in Table \ref{tab:C2fau}, the GFaure construction has $C_2(\BFk;P_n)$ values that are almost all smaller than 1 and provides estimators with small variance regardless of the scrambling base. 
 On the other hand, the Faure sequence, given its larger $C_2(\BFk;P_n)$ values, should be used along with a base$-b$ digital scramble with $b=53$.

\section{Conclusion}
\label{sec:Conc}

\hchristiane{I moved some of the ideas mentioned throughout the text here. Ref is saying we are vague but I actually think we're quite precise in the description of our questions for future study}
\hchristiane{Updated on October 7 but check for relevance}
In this paper we have introduced the concept of quasi-equidistribution along with values $C_b(\BFk;P_n)$ that play a key role in analyzing the dependence structure of scrambled point sets. We have proved that scrambled digital $(0,m,s)$-nets have the property of being NUOD and NLOD and that any scrambled digital \hjaspar{Do we need to emphasise that the construction must be digital here?} \hchristiane{Added ``digital''} net with $t>0$ does not have this property. The tools we have developed to get these results will allow us to explore different paths to generalize these results. In particular, we would like to explore a generalized concept of dependence that considers sets other than the rectangular boxes anchored at the origin or at the opposite corner $(1,\ldots,1)$ that are used to define the NLOD/NUOD concepts. 
We also plan to explore how the representation for the covariance term ${\rm Cov}(f(\BFU),f(\BFV))$ as an integral of the joint pdf associated with a scrambled point set can be exploited to estimate the variance of estimators based on these point sets without having to make use of repeated randomizations.
Finally, we want to explore how the  $C_b(\BFk;P_n)$ values  can be used to construct new quality measures for digital nets.
For instance, we could combine them into a weighted measure, or summarize them differently than in Section \ref{sec:qual}, e.g., by grouping them according to which coordinates of $\BFk$ are non-zero.
 In turn, such measures could be used to design new constructions. We also want to study how the  $C_b(\BFk;P_n)$ values can be used to  assess the  propensity of scrambled nets to provide estimators with lower variance than the Monte Carlo method based on their negative dependence structure.

\section*{Acknowledgements}
The authors wish to acknowledge the support of the Natural Science and Engineering Research Council (NSERC) of Canada for its financial support via grant \# 238959. The first author is also partially supported by the Austrian Science Fund (FWF):  Projects F5506-N26 and F5509-N26, which are parts of the Special Research Program ``Quasi-Monte Carlo Methods: Theory and Applications".


\appendix
\section*{Appendix}

\begin{proof}[Proof of Lemma \ref{TechnicalLemma}]
When either $x$ or $y$ is 1, from Lemma \ref{lem:FormViOne} we know that $V_i = x (b-1)/b^{i+1}$ and thus
$bV_i-V_{i-1}=0$ in this case. So for the remainder of the proof, we assume $x,y \in [0,1)$.
\hchristiane{We need to change $k_i$ in there, especially bad with the sum over $k$ below}
Let
\[
x=\sum_{k=1}^\infty \frac{x_k}{b^k},  \text{ and } y=\sum_{k=1}^\infty \frac{y_k}{b^k}
\]
be the base $b$ digital expansion of $x$ and $y$ chosen so that only finitely many digits are non-zero. 
Recall that $k_i = \lfloor b^i \min(x,y)\rfloor b^{-i}$ for $i \ge 0$. When $\gamma_b(x,y) \ge 1$, then for $i\in \{1,\ldots,\gamma_b(x,y)\}$ we have
\[
h_i=\sum_{k=1}^i \frac{x_k}{b^k}=\sum_{k=1}^i \frac{y_k}{b^k},
\]
and $k_0=0$. We also define 
$r_x^{i}=x-h_i$, and $r_y^{i}=y-h_i$ for $i \ge 0$. Without loss of generality we assume $x\leq y$. There are four cases.

\emph{Case 1:} ($\gamma_b(x,y)<i-1$)

In this case 
\[
bV_{i}-V_{i-1}=b\frac{x}{b^{i}}-\frac{x}{b^{i-1}}=0.
\]

\emph{Case 2:} ($\gamma_b(x,y)=i-1$)

In this case, $bV_{i}-V_{i-1}$ becomes
\begin{align*}
&\frac{x}{b^{i-1}}-xy+h_{i-1}\Big(x+y-h_{i-1} -\frac{1}{b^{i-1}}\Big)\\
&=\frac{h_{i-1}+r_x^{i-1}}{b^{i-1}}-(h_{i-1}+r_x^{i-1})(h_{i-1}+r_y^{i-1})+h_{i-1}\Big(h_{i-1}+r_x^{i-1}+r_y^{i-1}-\frac{1}{b^{i-1}}\Big)\\
&=\frac{h_{i-1}+r_x^{i-1}}{b^{i-1}}-r_x^{i-1}r_y^{i-1}-\frac{h_{i-1}}{b^{i-1}}
\geq \frac{h_{i-1}+r_x^{i-1}}{b^{i-1}}-\frac{r_x^{i-1}}{b^{i-1}}-\frac{h_{i-1}}{b^{i-1}}=0
\end{align*}
because $r_y^{i-1}\leq 1/b^{i-1}$.

\emph{Case 3:} ($\gamma_b(x,y)=i$)

We use the calculation in Case 2 and the identities $r_x^{i-1}=x_{i}/b^{i}+r_x^{i}$ and $r_y^{i-1}=x_{i}/b^{i}+r_y^{i}$ to simplify $bV_{i}-V_{i-1}$:
\begin{align*}
	&b\Big(xy-\frac{x}{b^{i+1}}-k_{i}\Big(x+y-k_{i}-\frac{1}{b^{i}}\Big)\Big)\\
	&-\Big(k_{i}\Big(x+y-k_{i}-\frac{1}{b^{i}}\Big)-h_{i-1}\Big(x+y-h_{i-1} -\frac{1}{b^{i-1}}\Big)\Big)\\
	&=(b+1)\Big(xy-\frac{x}{b^{i}}-k_{i}\Big(x+y-k_{i} -\frac{1}{b^{i}}\Big)\Big)\\
	&-\Big(xy-\frac{x}{b^{i-1}}-h_{i-1}\Big(x+y-h_{i-1} -\frac{1}{b^{i-1}}\Big)\Big)\\
	&=(b+1)\Big(r_x^{i}r_y^{i}-\frac{r_x^{i}}{b^{i}}\Big)-\Big(r_x^{i-1}r_y^{i-1}-\frac{r_x^{i-1}}{b^{i-1}}\Big)\\
&=br_x^{i}r_y^{i}-\frac{r_x^{i}}{b^{i}}-\frac{x_{i}^2}{b^{2i}}-\frac{x_{i}(r_x^{i}+r_y^{i})}{b^{i}}+\frac{x_{i}}{b^{2i-1}}.
\end{align*}
Multiply by $b^{i}$ to get
\[
b^{i+1}r_x^{i}r_y^{i}-r_x^{i}-\frac{x_{i}^2}{b^{i}}-x_{i}r_x^{i}-x_{i}r_y^{i}+\frac{x_{i}}{b^{i-1}}
\]
which will be shown to be non-negative. Note that by assumption $x<y$ and since their base $b$ expansions differ for the first time at the $(i+1)^\text{th}$ digit we always have $x_{i+1}<y_{i+1}$.

\emph{Case 3a:} ($x_i\leq x_{i+1}<y_{i+1}$)

	The assumption implies $0\leq b^{i+1}r_x^i-x_i$ and  $(x_i+1)/b^{i+1}\leq r_y^i$. We estimate
\begin{align*}
&(b^{i+1}r_x^{i}-x_{i})r_y^{i}-r_x^{i}-\frac{x_{i}^2}{b^{i}}-x_{i}r_x^{i}+\frac{x_{i}}{b^{i-1}}\\
&\geq (b^{i+1}r_x^{i}-x_{i})\frac{x_{i}+1}{b^{i+1}}-r_x^{i}-\frac{x_{i}^2}{b^{i}}-x_{i}r_x^{i}+\frac{x_{i}}{b^{i-1}}\\
&=\frac{x_{i}}{b^{i+1}}(b^2-(b+1)x_{i}-1)\geq \frac{x_{i}}{b^{i+1}}(b^2-(b+1)(b-1)-1)=0.
\end{align*}

\emph{Case 3b: $(x_{i+1}<x_{i}<y_{i+1})$}

The assumption implies  $r_x^{i}\leq x_{i}/b^{i+1}$ and $(b^{i+1}r_y^{i}-x_{i}-1)\leq 0$. We estimate
\begin{align*}
&\frac{x_{i}}{b^{i-1}}-\frac{x_{i}^2}{b^{i}}-x_{i}r_y^{i}+(b^{i+1}r_y^{i}-x_{i}-1)r_x^{i}\\
&\geq \frac{x_{i}}{b^{i-1}}-\frac{x_{i}^2}{b^{i}}-x_{i}r_y^{i}+(b^{i+1}r_y^{i}-x_{i}-1)\frac{x_{i}}{b^{i+1}}\\
&=\frac{x_{i}}{b^{i+1}}(b^2-(b+1)x_{i}-1))\geq\frac{x_{i}}{b^{i+1}}(b^2-(b+1)(b-1)-1))=0.
\end{align*}

\emph{Case 3c: $(x_{i+1}<y_{i+1}\leq x_i)$}
	
	The assumption implies  $0\leq (b^{i+1}r_y^{i}-x_{i}-1)$. We estimate
\begin{align*}
&b^{i+1}r_x^{i}r_y^{i}-r_x^{i}-\frac{x_{i}^2}{b^{i}}-x_{i}r_x^{i}-x_{i}r_y^{i}+\frac{x_{i}}{b^{i-1}}=\\
&= \frac{x_{i}}{b^{i-1}}-\frac{x_{i}^2}{b^{i}}-x_{i}r_y^{i} +(b^{i+1}r_y^{i}-x_{i}-1)r_x^{i}\\
&\geq \frac{x_{i}}{b^{i-1}}-\frac{x_{i}^2}{b^{i}}-x_{i}r_y^{i}\geq x_{i}\Big(\frac{1}{b^{i-1}}-\frac{b-1}{b^{i}}-\frac{1}{b^{i}}\Big)=0.
\end{align*}

\emph{Case 4:} ($\gamma_b(x,y)>i$)

In this case we need to show that 
\begin{equation}\label{Case4}
bh_{i+1}(x+y-h_{i+1}-1/b^{i+1})-(b+1)k_{i}(x+y-k_{i}-1/b^{i})+h_{i-1}(x+y-h_{i-1} - 1/b^{i-1})
\end{equation}
is greater than or equal to zero. Using the identities $h_{i+1}=h_{i-1}+x_{i}/b^{i}+x_{i+1}/b^{i+1}$, $k_{i}=h_{i-1}+x_{i}/b^{i}$, $x=h_{i-1}+x_{i}/b^{i}+x_{i+1}/b^{i+1}+r^{i+1}_x$, and $y=h_{i-1}+x_{i}/b^{i}+x_{i+1}/b^{i+1}+r^{i+1}_y$ write
\begin{align*}
	&h_{i+1}\Big(x+y-h_{i+1}-\frac{1}{b^{i+1}}\Big)=\\
	&=\Big(h_{i-1}+\frac{x_{i}}{b^{i}}+\frac{x_{i+1}}{b^{i+1}}\Big)\Big(h_{i-1}+\frac{x_{i}}{b^{i}}+\frac{x_{i+1}}{b^{i+1}}+r^{i+1}_x+r^{i+1}_y-\frac{1}{b^{i+1}}\Big)\\
	&=h_{i-1}^2+\frac{x_{i}^2}{b^{2i}}+\frac{x_{i+1}^2}{b^{2i+2}}+\frac{2h_{i-1}x_{i}}{b^{i}}+\frac{2h_{i-1}x_{i+1}}{b^{i+1}}+\frac{2x_{i}x_{i+1}}{b^{2i+1}}+h_{i-1}(r^{i+1}_x+r^{i+1}_y)\\
	&+\frac{x_{i}(r^{i+1}_x+r^{i+1}_y)}{b^{i}}+\frac{x_{i+1}(r^{i+1}_x+r^{i+1}_y)}{b^{i+1}}-\frac{h_{i-1}}{b^{i+1}}-\frac{x_{i}}{b^{2i+1}}-\frac{x_{i+1}}{b^{2i+2}},
\end{align*}
and
\begin{align*}
	&k_{i}\Big(x+y-k_{i}-\frac{1}{b^{i}}\Big)=\\
	&=\Big(h_{i-1}+\frac{x_{i}}{b^{i}}\Big)\Big(h_{i-1}+\frac{x_{i}}{b^{i}}+\frac{2x_{i+1}}{b^{i+1}}+r^{i+1}_x+r^{i+1}_y-\frac{1}{b^{i}}\Big)\\
	&=h_{i-1}^2+\frac{x_{i}^2}{b^{2i}}+\frac{2h_{i-1}x_{i}}{b^{i}}+\frac{2h_{i-1}x_{i+1}}{b^{i+1}}+\frac{2x_{i}x_{i+1}}{b^{2i+1}}\\
	&+h_{i-1}(r^{i+1}_x+r^{i+1}_y)+\frac{x_{i}(r^{i+1}_x+ r^{i+1}_y)}{b^{i}}-\frac{h_{i-1}}{b^{i}}-\frac{x_{i}}{b^{2i}},
\end{align*}
and
\begin{align*}
&h_{i-1}\Big(x+y-h_{i-1}-\frac{1}{b^{i-1}}\Big)=\\
&=h_{i-1}\Big(h_{i-1}+\frac{2x_{i}}{b^{i}}+\frac{2x_{i+1}}{b^{i+1}}+r^{i+1}_x+r^{i+1}_y-\frac{1}{b^{i-1}}\Big)\\
&=h_{i-1}^2+\frac{2h_{i-1}x_{i}}{b^{i}}+\frac{2h_{i-1}x_{i+1}}{b^{i+1}}+h_{i-1}(r^{i+1}_x+r^{i+1}_y)-\frac{h_{i-1}}{b^{i-1}}.
\end{align*}
Now substituting into (\ref{Case4}) and simplifying \hchristiane{I wrote "simplifying" instead of "combining like terms"} we get
\begin{align*}
	&bV_{i+1}-V_{i}=0\Big(h_{i-1}^2+\frac{2h_{i-1}x_{i}}{b^{i}}+\frac{2h_{i-1}x_{i+1}}{b^{i+1}}+h_{i-1}(r^{i+1}_x+r^{i+1}_y)\Big)\\
	&-\Big(\frac{x_{i}^2}{b^{2i}}+\frac{2x_{i}x_{i+1}}{b^{2i+1}}+\frac{x_{i}(r^{i+1}_x+r^{i+1}_y)}{b^{i}}\Big)+b\Big(\frac{x_{i+1}^2}{b^{2i+2}}+\frac{x_{i+1}(r^{i+1}_x+r^{i+1}_y)}{b^{i+1}}\Big)\\
	&-h_{i-1}\Big(\frac{b}{b^{i+1}}-\frac{b+1}{b^{i}}+\frac{1}{b^{i-1}}\Big)-x_{i}\Big(\frac{b}{b^{2i+1}}-\frac{b+1}{b^{2i}}\Big)-\frac{bx_{i+1}}{b^{2i+2}}\\
	&=\frac{x_{i+1}^2}{b^{2i+1}}+\frac{x_{i}}{b^{2i-1}}-\frac{x_{i}^2}{b^{2i}}-\frac{2x_{i}x_{i+1}}{b^{2i+1}}-\frac{x_{i+1}}{b^{2i+1}}+\frac{(x_{i+1}-x_{i})(r^{i+1}_x+r^{i+1}_y)}{b^{i}}.
\end{align*}
By multiplying the above by $b^{2i+1}$ we see that to finish the proof we need to show that
\[
x_{i+1}^2+b^2x_{i}-bx_{i}^2-2x_{i}x_{i+1}-x_{i+1}+b^{i+1}(x_{i+1}-x_{i})(r^{i+1}_x+r^{i+1}_y)
\]
 is non-negative.

\emph{Case 4a:} ($x_{i}<x_{i+1}$)

We have
\begin{align*}
&x_{i+1}^2+b^2x_{i}-bx_{i}^2-2x_{i}x_{i+1}-x_{i+1}+b^{i+1}(x_{i+1}-x_{i})(r^{i+1}_x+r^{i+1}_y)\\
&\geq x_{i+1}^2+b^2x_{i}-bx_{i}^2-2x_{i}x_{i+1}-x_{i+1}\\
&\geq x_{i+1}^2+bx_{i}(x_{i+1}+1)-bx_{i}x_{i+1}-2x_{i}x_{i+1}-x_{i+1}\\
&=x_{i+1}^2+bx_{i}-2x_{i}x_{i+1}-x_{i+1}\\
&\geq x_{i+1}^2+(x_{i+1}+1)x_{i}-2x_{i}x_{i+1}-x_{i+1}\\
&\geq x_{i+1}^2-x_{i}x_{i+1}-x_{i+1}+x_{i}\\
&\geq x_{i+1}(x_{i+1}-x_{i}-1)\\
&\geq 0.
\end{align*}

\emph{Case 4b:} ($x_{i+1}\leq x_{i}$)

Since $r^{i+1}_x,r^{i+1}_y\leq 1/b^{i+1}$, we have
\begin{align*}
&x_{i+1}^2+b^2x_{i}-bx_{i}^2-2x_{i}x_{i+1}-x_{i+1}+b^{i+1}(x_{i+1}-x_{i})(r^{i+1}_x+r^{i+1}_y)\\
&\geq x_{i+1}^2+b^2x_{i}-bx_{i}^2-2x_{i}x_{i+1}+x_{i+1}-2x_{i}\\
&=(x_{i+1}-x_{i})^2+b^2x_{i}-(b+1)x_{i}^2-2x_{i}+x_{i+1}\\
&\geq (x_{i+1}-x_{i})^2+x_{i}(b^2-(b+1)(b-1)-2)+x_{i+1}\\
&=(x_{i+1}-x_{i})^2+x_{i+1}-x_{i}\\
&=(x_{i+1}-x_{i})(x_{i+1}-x_{i}+1)\\
&=(x_{i}-x_{i+1})(x_{i}-x_{i+1}-1)\geq 0.\qedhere
\end{align*}
\end{proof}

\end{document}